\documentclass[12pt]{article}
\usepackage[]{amsmath,amssymb}
\usepackage{amscd}
\usepackage{latexsym}
\usepackage{cite}
\usepackage{amsthm}

\newtheorem{definition}{Definition}[section]
\newtheorem{theorem}[definition]{Theorem}
\newtheorem{lemma}[definition]{Lemma}
\newtheorem{corollary}[definition]{Corollary}
\newtheorem{remark}[definition]{Remark}
\newtheorem{example}[definition]{Example}

\newtheorem{problem}[definition]{Problem}
\newtheorem{note}[definition]{Note}

\newtheorem{proposition}[definition]{Proposition}

\typeout{Substyle for letter-sized documents. Released 24 July 1992}


\setlength{\topmargin}{-1in}
\setlength{\headheight}{1.5cm}
\setlength{\headsep}{0.3cm}
\setlength{\textheight}{9in}
\setlength{\oddsidemargin}{0cm}
\setlength{\evensidemargin}{0cm}
\setlength{\textwidth}{6.5in}

\begin{document}
\title{\bf 
The alternating PBW basis for 
the \\
positive part of
$U_q(\widehat{\mathfrak{sl}}_2)$ }
\author{
Paul Terwilliger 
}
\date{}

\maketitle
\begin{abstract}
The positive part $U^+_q$ of 
$U_q(\widehat{\mathfrak{sl}}_2)$ has a presentation with
two generators $A,B$ that satisfy the cubic $q$-Serre relations.
We introduce a PBW basis for $U^+_q$, said to be alternating. 
Each element of this PBW basis commutes with exactly one of
$A$, $B$, $qAB-q^{-1}BA$. This gives three types of
PBW basis elements;
the  elements of each type
 mutually commute.
We interpret the alternating PBW basis in terms of
a $q$-shuffle algebra associated with affine $\mathfrak{sl}_2$.
We show how the alternating PBW basis is related
to the PBW basis for $U^+_q$ found by Damiani in 1993.

\bigskip

\noindent
{\bf Keywords}.  $q$-Onsager algebra, $q$-shuffle algebra, PBW basis.
\hfil\break
\noindent {\bf 2010 Mathematics Subject Classification}. 
Primary: 17B37. Secondary  05E15.

 \end{abstract}
\section{Introduction}
This paper is motivated by a recent development in statistical
mechanics, concerning the $q$-Onsager algebra $\mathcal O_q$
\cite{bas1},
 \cite{qSerre}.
 In \cite{BK05} Baseilhac and Koizumi introduced a current algebra
$\mathcal A_q$ for $\mathcal O_q$, in order to solve boundary
integrable systems with hidden symmetries. In
\cite[Definition~3.1]{basnc} Baseilhac and Shigechi give
a presentation of $\mathcal A_q$ by generators and relations.
The generators are denoted
$\lbrace \mathcal W_{-k}\rbrace_{k=0}^\infty$,
$\lbrace \mathcal W_{k+1}\rbrace_{k=0}^\infty$,
$\lbrace \mathcal G_{k+1}\rbrace_{k=0}^\infty$,
$\lbrace \mathcal {\tilde G}_{k+1}\rbrace_{k=0}^\infty$.
The relations involve $q$ and a nonzero 
scalar parameter $\rho$. In an attempt
to understand $\mathcal A_q$ we considered the
limiting case $\rho=0$. For this value of $\rho$
the algebra $\mathcal O_q$ gets replaced by an algebra $U^+_q$ called 
the positive part of the quantum group $U_q(\widehat{\mathfrak{sl}}_2)$.
The algebra $U^+_q$ has a presentation with two generators $A$, $B$
that satisfy the cubic $q$-Serre relations; 
see Definition \ref{def:posp} below. In this paper we display
some elements in $U^+_q$, denoted
\begin{align}
\label{eq:WWGG}
\lbrace  W_{-k}\rbrace_{k=0}^\infty, \quad 
\lbrace  W_{k+1}\rbrace_{k=0}^\infty, \quad
\lbrace  G_{k+1}\rbrace_{k=0}^\infty, \quad
\lbrace {\tilde G}_{k+1}\rbrace_{k=0}^\infty,
\end{align}
that satisfy
the $\rho=0$ analog of the relations in
\cite[Definition~3.1]{basnc}. These relations are 
given in 
Propositions \ref{prop:rel1},
\ref{prop:rel2} below; see
Propositions
\ref{prop:rel3},
\ref{prop:attract},
\ref{prop:GGWW}
for some additional relations.
\medskip

\noindent 
We defined the elements
(\ref{eq:WWGG}) and obtained the above relations
in the following way.
Start with the free algebra $\mathbb V$ on two generators $x,y$.
The standard (linear) basis for
$\mathbb V$ consists of the words in $x,y$.
In 
\cite{rosso1, rosso} M. Rosso introduced
an algebra structure on $\mathbb V$, called a
$q$-shuffle algebra.
For $u,v\in \lbrace x,y\rbrace$ their
$q$-shuffle product is
$u\star v = uv+q^{\langle u,v\rangle }vu$, where
$\langle u,v\rangle =2$
(resp. $\langle u,v\rangle =-2$)
if $u=v$ (resp.
 $u\not=v$).
Rosso gave an injective algebra homomorphism $\natural$ 
from $U^+_q$ into the $q$-shuffle algebra
${\mathbb V}$, that sends $A\mapsto x$ and $B\mapsto y$.
Let $U$ denote the image of $U^+_q$ under $\natural$.
A word $v_1v_2\cdots v_n$ in $\mathbb V$ is said to be
alternating whenever $n\geq 1$ and $v_{i-1}\not=v_i$ for
$2 \leq i \leq n$. We name the alternating words as follows:
\begin{align*}
&W_0 = x, \qquad W_{-1} = xyx, \qquad W_{-2} = xyxyx, \qquad \ldots
\\
&W_1 = y, \qquad W_{2} = yxy, \qquad W_{3} = yxyxy, \qquad \ldots 
\\
&G_{1} = yx, \qquad G_{2} = yxyx,  \qquad G_3 = yxyxyx, \qquad \ldots 
\\
&\tilde G_{1} = xy, \qquad \tilde G_{2} =
xyxy,\qquad \tilde G_3 = xyxyxy, \qquad \ldots
\end{align*}
We describe the $q$-shuffle product of every pair of alternating words.
Using this description we 
show that the alternating words satisfy the relations mentioned
below  (\ref{eq:WWGG}).
Using these relations we show that $U$ contains
the alternating words.
\medskip

\noindent We use the alternating words to obtain some
PBW bases for $U$.
For instance, we show that the elements
$\lbrace  W_{-k}\rbrace_{k=0}^\infty$,
$\lbrace  W_{k+1}\rbrace_{k=0}^\infty$,
$\lbrace {\tilde G}_{k+1}\rbrace_{k=0}^\infty$ 
(in appropriate linear order) give a PBW basis for $U$,
said to be alternating.
The elements 
$\lbrace  W_{-k}\rbrace_{k=0}^\infty$,
$\lbrace  W_{k+1}\rbrace_{k=0}^\infty$,
$\lbrace G_{k+1}\rbrace_{k=0}^\infty$ give a similar PBW basis
for $U$.
We describe how the alternating PBW basis is related to the
PBW basis for $U^+_q$ found by Damiani 
in \cite{damiani}. 
\medskip

\noindent This paper is organized as follows.
In Section 2, we recall the notion of a PBW basis, and
describe the one for $U^+_q$ found by Damiani.
In Section 3 we obtain some slightly technical facts about
$U^+_q$ that will be used later in the paper.
In Section 4 we describe the algebra homomorphism  $\natural $
from $U^+_q$ into
the $q$-shuffle algebra $\mathbb V$.
In Section 5 we introduce the alternating words in $\mathbb V$,
and obtain some relations involving these words.
In Sections 6, 7 we use these relations to obtain a commutator
relation for every pair of alternating words.
In Section 8 we obtain some additional relations for the alternating
words, which get used to show that the alternating words
are contained in $U$.
In Section 9 the alternating words are related using
generating functions.
In Section 10 we use the alternating words to 
obtain some PBW bases for $U$, including
the alternating PBW basis.
In Section 11 we show how the
alternating PBW basis is related to the Damiani PBW basis.
In Section 12 we give some slightly technical 
comments about some relations in Sections 6, 8.
In Section 13 we give some open problems.
In Appendices A, B we present the commutator relations 
in an alternative way.
In Appendix C we give some examples of the commutator relations.

\section{The algebra $U^+_q$}

\noindent We now begin our formal argument.
Recall the natural numbers $\mathbb N=\lbrace 0,1,2,\ldots\rbrace$
and integers $\mathbb Z = \lbrace 0,\pm 1, \pm 2,\ldots \rbrace$.
Let $\mathbb F$ denote a field.
We will be discussing vector spaces, tensor products,
and algebras.
Each vector space and tensor product discussed is over $\mathbb F$.
Each algebra discussed is associative, over $\mathbb F$,
and has a multiplicative identity.
A subalgebra has the same multiplicative identity as the parent algebra.

\begin{definition} 
\label{def:PBW}
\rm (See  \cite[p.~299]{damiani}.) 
Let $\mathcal A$ denote an algebra.
     A {\it Poincar\'e-Birkhoff-Witt} (or {\it PBW})
   basis for $\mathcal A$ consists of a subset $\Omega \subseteq
    \mathcal A$ and
   a linear order $<$ on $\Omega$, such that  the
   following is a basis for the vector space $\mathcal A$:
    \begin{align*}
    a_1 a_2 \cdots a_n \qquad \quad  n\in \mathbb N, \quad \qquad
    a_1,a_2,\ldots, a_n \in \Omega,
    \qquad \quad  a_1 \leq a_2 \leq \cdots \leq a_n.
    \end{align*}
  We interpret the empty product as the multiplicative identity in 
   $\mathcal A$.
   \end{definition}

\noindent Fix a nonzero $q \in \mathbb F$
that is not a root of unity.
Recall the notation
\begin{eqnarray*}
\lbrack n\rbrack_q = \frac{q^n-q^{-n}}{q-q^{-1}}
\qquad \qquad n \in \mathbb Z.
\end{eqnarray*}
\noindent For elements $X, Y$ in any algebra, define their
commutator and $q$-commutator by 
\begin{align*}
\lbrack X, Y \rbrack = XY-YX, \qquad \qquad
\lbrack X, Y \rbrack_q = q XY- q^{-1}YX.
\end{align*}
\noindent Note that 
\begin{align*}
\lbrack X, \lbrack X, \lbrack X, Y\rbrack_q \rbrack_{q^{-1}} \rbrack
= 
X^3Y-\lbrack 3\rbrack_q X^2YX+ 
\lbrack 3\rbrack_q XYX^2 -YX^3.
\end{align*}

\begin{definition}
\label{def:posp}
\rm (See \cite[Corollary~3.2.6]{lusztig}.) Define the algebra $U^+_q$ 
by generators $A,B$ and relations
\begin{eqnarray}
&&
\lbrack A, \lbrack A, \lbrack A, B\rbrack_q \rbrack_{q^{-1}} \rbrack=0,
\qquad \qquad 
\lbrack B, \lbrack B, \lbrack B, A\rbrack_q \rbrack_{q^{-1}}
\rbrack=0.
\label{eq:qSerre1}
\end{eqnarray}
\noindent We call $U^+_q$ the {\it positive part of 
$U_q(\widehat{\mathfrak{sl}}_2)$}.
The relations (\ref{eq:qSerre1})
are called the {\it $q$-Serre relations}.
\end{definition}

\noindent
For an algebra $\mathcal A$,
by an {\it automorphism} of $\mathcal A$ we mean an
algebra isomorphism $\mathcal A \to \mathcal A$.
By an {\it antiautomorphism} of $\mathcal A$, we mean an
$\mathbb F$-linear bijection $\gamma: \mathcal A \to \mathcal A$
such that $(ab)^\gamma= b^\gamma a^\gamma$ for all $a,b\in \mathcal A$.
\begin{lemma}
\label{lem:AAut}
There exists a unique automorphism $\sigma$ of
$U^+_q$ that swaps $A, B$. There exists a unique antiautomorphism
$S$ of $U^+_q$ that fixes each of $A$, $B$.
\end{lemma}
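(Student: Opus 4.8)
The plan is to use the fact that $U^+_q$ is defined by generators and relations, so to construct an (anti)automorphism it suffices to exhibit the image of the generators and check that the defining relations are preserved; uniqueness is automatic since $A,B$ generate $U^+_q$. For the automorphism $\sigma$, I would first observe that the two relations in (\ref{eq:qSerre1}) are interchanged by the substitution $A\leftrightarrow B$: indeed the second relation is literally obtained from the first by swapping the roles of $A$ and $B$. Hence the assignment $A\mapsto B$, $B\mapsto A$ extends to an algebra homomorphism $\sigma\colon U^+_q\to U^+_q$. Applying $\sigma$ twice fixes both generators, so $\sigma^2 = \mathrm{id}$, which shows $\sigma$ is a bijection and therefore an automorphism.

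For the antiautomorphism $S$, I would first note that if $\mathcal A$ is any algebra and $\mathcal A^{\mathrm{op}}$ its opposite algebra, then antiautomorphisms of $\mathcal A$ are the same as algebra isomorphisms $\mathcal A\to\mathcal A^{\mathrm{op}}$. So I want to build an algebra homomorphism $U^+_q\to (U^+_q)^{\mathrm{op}}$ sending $A\mapsto A$ and $B\mapsto B$; equivalently, I check that $A,B$ satisfy the defining relations of $U^+_q$ inside the opposite algebra. The key computation is that reversing the order of all products in $[A,[A,[A,B]_q]_{q^{-1}}]$ sends it (up to sign, which vanishes against $0$) to a similar triple-commutator expression. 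Concretely, using the displayed expansion
\begin{align*}
[A,[A,[A,B]_q]_{q^{-1}}] = A^3B - [3]_q A^2BA + [3]_q ABA^2 - BA^3,
\end{align*}
I reverse each monomial to get $BA^3 - [3]_q AB A^2 + [3]_q A^2 B A - A^3 B = -\bigl(A^3B - [3]_q A^2BA + [3]_q ABA^2 - BA^3\bigr)$, which is $-[A,[A,[A,B]_q]_{q^{-1}}] = 0$ in $U^+_q$; the same works for the other $q$-Serre relation with $A,B$ swapped. Therefore the identity map on generators extends to an algebra homomorphism $U^+_q\to (U^+_q)^{\mathrm{op}}$, i.e. to an antiautomorphism $S$ of $U^+_q$ fixing $A$ and $B$. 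Since $S^2$ fixes the generators, $S^2=\mathrm{id}$, so $S$ is bijective.

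I do not anticipate a serious obstacle here; the only point requiring any care is the bookkeeping for the opposite-algebra argument — making sure that "reverse every product" really is an antiautomorphism of the free algebra on $A,B$ and that it maps the two-sided ideal generated by the $q$-Serre relations into itself, which is exactly the sign computation above. Uniqueness in both cases follows immediately because any (anti)automorphism is determined by its action on the generating set $\{A,B\}$.
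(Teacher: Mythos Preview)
Your argument is correct and is the standard one: the $q$-Serre relations are visibly symmetric under the swap $A\leftrightarrow B$, and the expanded form $A^3B - [3]_q A^2BA + [3]_q ABA^2 - BA^3$ changes sign under product reversal, so both maps are well defined on the quotient; $\sigma^2=\mathrm{id}$ and $S^2=\mathrm{id}$ give bijectivity, and uniqueness is immediate from the fact that $A,B$ generate. The paper itself gives no proof of this lemma, treating it as routine --- your write-up supplies exactly the verification one would expect.
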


\noindent 
In 
\cite[p.~299]{damiani}  Damiani obtained a 
PBW basis for $U^+_q$, involving some elements
  \begin{equation}
   \label{eqUq:PBWintro}
    \lbrace E_{n \delta + \alpha_0}\rbrace_{n=0}^\infty,
     \qquad \quad
      \lbrace E_{n \delta + \alpha_1}\rbrace_{n=0}^\infty,
       \qquad \quad
        \lbrace E_{n \delta}\rbrace_{n=1}^\infty.
         \end{equation}
These elements are recursively defined as follows:
\begin{align}
E_{\alpha_0} = A, \qquad \qquad
E_{\alpha_1} = B, \qquad \qquad
E_{\delta} = q^{-2}BA-AB,
\label{eq:BAalt}
\end{align}
and for $n\geq 1$,
\begin{align}
&
E_{n \delta+\alpha_0} =
\frac{
\lbrack E_\delta, E_{(n-1)\delta+ \alpha_0} \rbrack
}
{q+q^{-1}},
\qquad \qquad
E_{n \delta+\alpha_1} =
 \frac{
 \lbrack
 E_{(n-1)\delta+ \alpha_1},
 E_\delta
 \rbrack
 }
 {q+q^{-1}},
 \label{eq:dam1introalt}
 \\
 &
 \qquad \qquad
 E_{n \delta} =
 q^{-2}  E_{(n-1)\delta+\alpha_1} A
 - A E_{(n-1)\delta+\alpha_1}.
 \label{eq:dam2introalt}
\end{align}


\begin{proposition}
    \label{prop:PBWbasis}
    {\rm (See \cite[p.~308]{damiani}.)}
The elements 
   {\rm (\ref{eqUq:PBWintro})} in the 
     linear
    order
 \begin{align*}
 E_{\alpha_0} < E_{\delta+\alpha_0} <
  E_{2\delta+\alpha_0}
  < \cdots
  <
   E_{\delta} < E_{2\delta}
    < E_{3\delta}
 < \cdots
  <
   E_{2\delta + \alpha_1} <
     E_{\delta + \alpha_1} < E_{\alpha_1}
     \end{align*}
    form a PBW basis for $U^+_q$.
   \end{proposition}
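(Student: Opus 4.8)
The plan is to verify separately the two defining properties of a PBW basis: that the prescribed ordered products span $U^+_q$, and that they are linearly independent. Both arguments are organized around the $\mathbb N^2$-grading of $U^+_q$ in which $A$ is homogeneous of degree $(1,0)$ and $B$ of degree $(0,1)$; the $q$-Serre relations \eqref{eq:qSerre1} are homogeneous for this grading, so it is well defined, and each graded component is finite-dimensional. A routine induction on $n$ using \eqref{eq:BAalt}--\eqref{eq:dam2introalt} shows that $E_{n\delta+\alpha_0}$, $E_{n\delta}$, $E_{n\delta+\alpha_1}$ are homogeneous of degrees $(n+1,n)$, $(n,n)$, $(n,n+1)$, respectively.

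For spanning I would establish Levendorskii--Soibelman type straightening relations: whenever two of Damiani's elements occur out of the order displayed in the proposition, their product lies in the span of products that are in order. The cases to treat are the ``same family'' brackets $[E_{m\delta+\alpha_0},E_{n\delta+\alpha_0}]$, $[E_{m\delta+\alpha_1},E_{n\delta+\alpha_1}]$, $[E_{m\delta},E_{n\delta}]$ --- the last of which should in fact vanish --- together with the mixed brackets such as $[E_{m\delta+\alpha_1},E_{n\delta+\alpha_0}]$ and $[E_{m\delta},E_{n\delta+\alpha_i}]$. Each I would reduce by induction on $m+n$: peel off an $E_\delta$-bracket using the recursions \eqref{eq:dam1introalt}, \eqref{eq:dam2introalt}, and then use \eqref{eq:qSerre1} together with $q$-Jacobi identities and the degree constraints to see that what remains is a combination of ordered monomials; the base cases are finite computations inside the subalgebra generated by $A$, $B$, $E_\delta$. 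Granting these relations, any word in $A=E_{\alpha_0}$ and $B=E_{\alpha_1}$ can be rewritten as a linear combination of ordered monomials by repeatedly swapping adjacent out-of-order factors, termination being guaranteed in each fixed bidegree by a monomial order (say, lexicographic comparison of the multiset of root-vector labels, the straightening relations only producing strictly smaller monomials). Hence the ordered monomials span $U^+_q$.

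For linear independence I would compare Poincar\'e series in the $\mathbb N^2$-grading. Since each of Damiani's elements may occur with arbitrary multiplicity in an ordered monomial, the ordered monomials have generating function
\[
\prod_{n=0}^{\infty}\frac{1}{1-s^{\,n+1}t^{\,n}}\cdot
\prod_{n=1}^{\infty}\frac{1}{1-s^{\,n}t^{\,n}}\cdot
\prod_{n=0}^{\infty}\frac{1}{1-s^{\,n}t^{\,n+1}},
\]
one factor per element, with $s,t$ recording the two degrees. On the other hand $U^+_q$ is a flat deformation of the enveloping algebra $U(\mathfrak n^+)$, where $\mathfrak n^+$ is the positive part of the affine Lie algebra $\widehat{\mathfrak{sl}}_2$, so its degree-$(a,b)$ component has the same dimension as the corresponding weight space of $U(\mathfrak n^+)$; this may be quoted from Lusztig, or obtained by comparison with the generic case $q$ transcendental and a flatness argument. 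The positive roots of $\widehat{\mathfrak{sl}}_2$ are precisely $\alpha_0+n\delta$ and $\alpha_1+n\delta$ for $n\ge 0$ and $n\delta$ for $n\ge 1$, each of multiplicity one, so the classical PBW theorem endows $U(\mathfrak n^+)$ with the very same generating function displayed above. Thus in each bidegree there are exactly as many ordered monomials as the dimension of that component; combined with spanning, this forces linear independence, and the proposition follows.

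The hard part is the spanning step --- showing the span of ordered monomials is closed under multiplication. The recursions for the real root vectors $E_{n\delta+\alpha_i}$ are tractable, but controlling the imaginary root vectors $E_{n\delta}$, in particular proving that they commute with one another and bracket correctly against the real root vectors, calls for a somewhat delicate simultaneous induction, and essentially all of the labor sits there. (In the body of this paper the difficulty is sidestepped: the relations among the alternating words, proved transparently inside the $q$-shuffle algebra of Section~4, yield the straightening relations for free.) A lesser point is that the independence argument relies on the known Poincar\'e series of $U^+_q$; a fully self-contained treatment would first construct the integral form and appeal to flatness.
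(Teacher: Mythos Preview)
The paper does not prove this proposition at all: it is stated with the attribution ``(See \cite[p.~308]{damiani})'' and used as a black box. So there is no proof in the paper to compare your proposal against. Your outline is a reasonable high-level sketch of how Damiani's original argument goes --- straightening relations to get spanning, and a dimension/character comparison for independence --- and the honest caveat you give about the inductive control of the imaginary root vectors is exactly where the work lies.

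One correction, though: your parenthetical remark that ``in the body of this paper the difficulty is sidestepped'' is not accurate. The paper \emph{relies} on Proposition~\ref{prop:PBWbasis} rather than circumventing it. Corollary~\ref{cor:dimUq}, giving the dimensions $d_{i,j}$ of the homogeneous components of $U^+_q$, is deduced directly from Damiani's PBW basis; these dimensions then feed into Proposition~\ref{prop:useful}, which is the tool used in Section~10 to prove that the alternating elements form a PBW basis. The $q$-shuffle computations establish the straightening relations among the \emph{alternating} words, not among Damiani's root vectors, and the argument that these span (hence are a basis) only terminates because the target dimensions $d_{i,j}$ are already known from Damiani. So the logical flow is Damiani $\Rightarrow$ dimension count $\Rightarrow$ alternating PBW basis, not the reverse.
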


\noindent 
The PBW basis elements 
   (\ref{eqUq:PBWintro}) are known to satisfy certain relations
  \cite[Section~4]{damiani}. For instance  
the elements $\lbrace E_{n\delta}\rbrace_{n=1}^\infty$ mutually
commute
  \cite[p.~307]{damiani}.
\medskip

\noindent Next we describe a grading for the algebra $U^+_q$.
Note that the $q$-Serre relations are homogeneous in both
$A$ and $B$. Therefore
the algebra $U^+_q$ has a ${\mathbb N}^2$-grading 
for which $A$ and $B$ are homogeneous,
with degrees $(1,0)$ and $(0,1)$ respectively.
For this grading the 
PBW basis elements 
   (\ref{eqUq:PBWintro}) are homogeneous with degrees
   shown below:
\bigskip

\centerline{
\begin{tabular}[t]{c|c}
{\rm PBW basis element} & {\rm degree} 
   \\  \hline
   $ E_{n \delta + \alpha_0}$ & $(n+1,n)$
	 \\
   $ E_{n \delta + \alpha_1}$ & $(n,n+1)$
          \\
   $ E_{n \delta}$ & $(n,n)$
	 \end{tabular}
              }
              \medskip

\noindent 
Using this data and Proposition
    \ref{prop:PBWbasis}, one can obtain the dimension
    of each homogeneous component for 
the $\mathbb N^2$-grading of $U^+_q$.
This calculation 
is elementary but useful later in the paper, so
we will go through it in detail. This will be done in the next section.

\section{The dimensions of the $U^+_q$ homogeneous components}

\noindent In this section we do two things. First we
compute the dimension of each homogeneous component
for the $\mathbb N^2$-grading of $U^+_q$.
Then we use this data to
characterize a certain type of PBW basis for $U^+_q$;
this characterization will be invoked later in the paper to
obtain the alternating PBW basis.

\begin{definition}\rm
Let the set $\mathcal R$ consist of the ordered pairs
 $(r,s) \in \mathbb N^2$ such that 
$|r-s|\leq 1$ and $(r,s) \not=(0,0)$.
\end{definition}

\begin{definition}\rm Define a generating function
in two commuting indeterminates $\lambda,\mu$:
\begin{align*}
\Phi(\lambda,\mu) =\prod_{(r,s) \in \mathcal R} \frac{1}{1-\lambda^r \mu^s}
\end{align*}
In more detail,
\begin{align*}
\Phi(\lambda,\mu) = 
\prod_{\ell=1}^\infty 
\frac{1}{1-\lambda^{\ell} \mu^{\ell-1}}\,
\frac{1}{1-\lambda^{\ell} \mu^{\ell}}\,
\frac{1}{1-\lambda^{\ell-1} \mu^{\ell}}.
\end{align*}
\end{definition}

\begin{definition}
\label{def:dij}
\rm 
For $(i,j) \in \mathbb N^2$ let $d_{i,j}$ denote the coefficient
of $\lambda^i \mu^j$ in $\Phi(\lambda,\mu)$. Thus
\begin{align*}
\Phi(\lambda,\mu) &= 
\prod_{(r,s) \in \mathcal R} (1+ \lambda^r \mu^s + \lambda^{2r} 
\mu^{2s} + \cdots )
\\
& = \sum_{(i,j) \in \mathbb N^2} d_{i,j} \lambda^i \mu^j.
\end{align*}
Note that $d_{i,j} \in \mathbb N$ for $(i,j) \in \mathbb N^2$.
\end{definition}

\begin{example} 
\label{ex:tabledij}
\rm
For $0 \leq i,j\leq 6$ the number $d_{i,j}$
is given in the $(i,j)$-entry of the matrix below:
\begin{align*}
	\left(
         \begin{array}{ccccccc}
               1&1&1&1&1&1&1 
	       \\
               1&2&3&3&3&3&3 
               \\
	       1&3&6&8&9&9&9 
              \\
	      1&3&8&14&19&21&22 
             \\
	     1&3&9&19&32&42&48 
            \\
	    1&3&9&21&42&66&87
	    \\
	    1&3&9&22&48&87&134
                  \end{array}
\right)
\end{align*}
\end{example}

\begin{definition}
\label{def:omega}
\rm A subset  $\Omega \subseteq U^+_q$
is called {\it feasible} whenever
\begin{enumerate}
\item[\rm (i)] each element of $\Omega$ is homogeneous 
with respect to the $\mathbb N^2$-grading of $U^+_q$;
\item[\rm (ii)] there is a bijection $\Omega \to \mathcal R$
that sends each element of $\Omega$ to its degree.
\end{enumerate}
\end{definition}

\noindent
Let $\Omega$ denote a feasible subset of $U^+_q$ and
let $<$ denote a linear order on 
$\Omega$.  Consider the following vectors in $U^+_q$:
    \begin{align}
    a_1 a_2 \cdots a_n \qquad \quad  n\in \mathbb N, \quad \qquad
    a_1,a_2,\ldots, a_n \in \Omega, 
    \qquad \quad  a_1 \leq a_2 \leq \cdots \leq a_n.
 \label{eq:set} 
  \end{align}
Note that each vector in 
 (\ref{eq:set}) is homogeneous 
with respect to the $\mathbb N^2$-grading of $U^+_q$.
\begin{lemma}
\label{lem:dijmeaning}
With the above notation,
 for $(i,j)\in \mathbb N^2$ 
 the number of vectors in 
{\rm (\ref{eq:set})}
that have degree $(i,j)$ is equal to 
the integer $d_{i,j}$ from 
Definition \ref{def:dij}.
\end{lemma}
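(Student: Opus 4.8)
The plan is to count the vectors in (\ref{eq:set}) by organizing them according to which PBW basis elements from $\Omega$ appear, and with what multiplicities. Since $\Omega$ is feasible, there is a bijection $\Omega \to \mathcal R$ sending each $a \in \Omega$ to its degree; denote the element of $\Omega$ of degree $(r,s) \in \mathcal R$ by $a_{(r,s)}$. A vector in (\ref{eq:set}) is, up to the prescribed linear order $<$, uniquely determined by the function $m \colon \Omega \to \mathbb N$ recording the multiplicity with which each $a \in \Omega$ occurs; conversely every such function arises from exactly one vector in (\ref{eq:set}), because the linear order $<$ forces a unique non-decreasing arrangement. Under this correspondence, the degree of the vector associated to $m$ is $\sum_{(r,s)\in\mathcal R} m(a_{(r,s)})\,(r,s)$, where the sum is coordinatewise in $\mathbb N^2$.

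First I would make this bijection precise and note that the vector indexed by $m$ has degree $(i,j)$ exactly when $\sum_{(r,s)\in\mathcal R} m_{(r,s)}\, r = i$ and $\sum_{(r,s)\in\mathcal R} m_{(r,s)}\, s = j$, where we abbreviate $m_{(r,s)} = m(a_{(r,s)})$. Next I would recognize the right-hand side of this bookkeeping as precisely the monomial-counting interpretation of the product defining $\Phi(\lambda,\mu)$. Expanding each factor $\frac{1}{1-\lambda^r\mu^s} = \sum_{k\geq 0} \lambda^{rk}\mu^{sk}$ and multiplying over $(r,s)\in\mathcal R$, the coefficient of $\lambda^i\mu^j$ in $\Phi(\lambda,\mu)$ counts exactly the families $(k_{(r,s)})_{(r,s)\in\mathcal R}$ of natural numbers with $\sum_{(r,s)} k_{(r,s)}\,r = i$ and $\sum_{(r,s)} k_{(r,s)}\, s = j$. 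This is the same count as above, via $k_{(r,s)} \leftrightarrow m_{(r,s)}$, so the number of vectors in (\ref{eq:set}) of degree $(i,j)$ equals $d_{i,j}$.

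The one point requiring a small argument is that the product $\Phi(\lambda,\mu)$, which is infinite, is well-defined as a formal power series and that each coefficient $d_{i,j}$ is a finite sum, so the combinatorial count is literally a finite number. This is because each factor other than the $\ell=1$ ones (namely $\frac{1}{1-\lambda^{\ell}\mu^{\ell-1}}$, $\frac{1}{1-\lambda^{\ell}\mu^{\ell}}$, $\frac{1}{1-\lambda^{\ell-1}\mu^{\ell}}$ for $\ell \geq 2$) contributes $1 + (\text{terms of total degree} \geq 2\ell-1)$, so only finitely many factors can contribute a nontrivial term to the coefficient of $\lambda^i\mu^j$; equivalently, for fixed $(i,j)$ only finitely many $(r,s)\in\mathcal R$ satisfy $r\leq i$, $s\leq j$, and only such pairs can occur with positive multiplicity. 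The same finiteness shows that each vector in (\ref{eq:set}) of degree $(i,j)$ is a product of boundedly many elements of $\Omega$, so the set being counted is finite. I do not expect any real obstacle here: once the bijection between vectors in (\ref{eq:set}) and multiplicity functions is set up, the result is just the standard combinatorial interpretation of a product of geometric series, read off degree by degree.
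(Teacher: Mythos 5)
Your proposal is correct and is essentially the paper's argument in expanded form: the paper's one-line proof simply records that each $a\in\Omega$ of degree $(r,s)$ contributes the factor $\frac{1}{1-\lambda^r\mu^s}=1+\lambda^r\mu^s+\lambda^{2r}\mu^{2s}+\cdots$ to $\Phi(\lambda,\mu)$, which is exactly the multiplicity-function bookkeeping you spell out. Your added remarks on the bijection with non-decreasing arrangements and on the finiteness of the coefficients are accurate fillings-in of details the paper leaves implicit.
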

\begin{proof} For $a\in \Omega$ with degree $(r,s)$
the contribution of $a$ to $\Phi(\lambda,\mu)$ is
\begin{align*}
\frac{1}{1-\lambda^r \mu^s} = 
1+ \lambda^r \mu^s + \lambda^{2r} \mu^{2s} + \cdots.
\end{align*}
\end{proof}

\begin{corollary}
\label{cor:dimUq}
For $(i,j)\in \mathbb N^2$ the $(i,j)$-homogeneous
component of $U^+_q$ has dimension $d_{i,j}$.
\end{corollary}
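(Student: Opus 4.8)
The plan is to deduce Corollary \ref{cor:dimUq} by comparing two ways of counting, using the Damiani PBW basis from Proposition \ref{prop:PBWbasis} together with the degree table listed just before the statement. First I would observe that the Damiani PBW set itself is a feasible subset of $U^+_q$ in the sense of Definition \ref{def:omega}: each $E_{n\delta+\alpha_0}$, $E_{n\delta+\alpha_1}$, $E_{n\delta}$ is homogeneous, with degrees $(n+1,n)$, $(n,n+1)$, $(n,n)$ respectively, and the map sending each such element to its degree is a bijection onto $\mathcal R$, since $\mathcal R$ is precisely the set of pairs $(r,s)\in\mathbb N^2$ with $|r-s|\le 1$ and $(r,s)\ne(0,0)$, and these degrees hit each such pair exactly once.

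Next I would apply Proposition \ref{prop:PBWbasis}: in the displayed linear order, the ordered products $a_1a_2\cdots a_n$ with $a_1\le\cdots\le a_n$ drawn from the Damiani set form a basis of $U^+_q$. Since this basis is homogeneous with respect to the $\mathbb N^2$-grading, for each $(i,j)\in\mathbb N^2$ the dimension of the $(i,j)$-homogeneous component of $U^+_q$ equals the number of such ordered products of degree $(i,j)$. Now invoke Lemma \ref{lem:dijmeaning} with $\Omega$ equal to the Damiani PBW set (which is feasible) and $<$ the Damiani order: the number of vectors in (\ref{eq:set}) of degree $(i,j)$ is exactly $d_{i,j}$. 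Combining these two statements gives that the $(i,j)$-homogeneous component of $U^+_q$ has dimension $d_{i,j}$.

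I do not anticipate a serious obstacle here; the content is entirely in assembling previously established facts. The one point that deserves a sentence of care is checking that the Damiani set meets the two conditions of feasibility — in particular that the degree map is a \emph{bijection} onto $\mathcal R$, i.e. no two Damiani generators share a degree and every element of $\mathcal R$ is attained. This is immediate from the table, but stating it explicitly is what licenses the use of Lemma \ref{lem:dijmeaning}. Everything else is a direct citation of Proposition \ref{prop:PBWbasis} and Lemma \ref{lem:dijmeaning}.
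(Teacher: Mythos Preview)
Your proposal is correct and follows essentially the same approach as the paper's own proof: take $\Omega$ to be the Damiani PBW elements, verify feasibility from the degree table, use Proposition~\ref{prop:PBWbasis} to get a homogeneous basis of $U^+_q$, and then apply Lemma~\ref{lem:dijmeaning} to count the degree-$(i,j)$ basis vectors as $d_{i,j}$. The only difference is expository---you spell out the bijection check for feasibility a bit more explicitly than the paper does.
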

\begin{proof} Let the set $\Omega$ 
consist of the elements
   (\ref{eqUq:PBWintro}). 
The set $\Omega$ is feasible
by
Definition
\ref{def:omega}
and the table below Proposition
    \ref{prop:PBWbasis}.
Endow $\Omega$ with the linear order $<$ from
    Proposition \ref{prop:PBWbasis}.
By Definition
\ref{def:PBW} and
Proposition
   \ref{prop:PBWbasis},
the vectors
 (\ref{eq:set}) form a basis for the vector space $U^+_q$.
We mentioned earlier
  that every vector in 
 (\ref{eq:set}) is homogeneous with respect to the
 $\mathbb N^2$-grading of $U^+_q$.
So for $(i,j)\in \mathbb N^2$ the set of
vectors in 
 (\ref{eq:set}) that have degree $(i,j)$ is a basis
for the $(i,j)$-homogeneous component of  
 $U^+_q$.
The result follows in view of Lemma
\ref{lem:dijmeaning}.
\end{proof}

\begin{lemma}
\label{lem:pre}
Let $\Omega$ denote a feasible subset of
$U^+_q$ and let $<$ denote a linear order on $\Omega$.
Then for $(i,j) \in \mathbb N^2$
the following {\rm (i)--(iii)} are equivalent:
\begin{enumerate}
\item[\rm (i)] the vectors in
 {\rm (\ref{eq:set})} that have degree $(i,j)$ span
 the $(i,j)$-homogeneous component of $U^+_q$;
\item[\rm (ii)] the vectors in
{\rm (\ref{eq:set})} that have degree $(i,j)$ are linearly
 independent;
\item[\rm (iii)] the vectors in
 {\rm (\ref{eq:set})} that have degree $(i,j)$ form a basis for 
 the $(i,j)$-homogeneous component of $U^+_q$.
\end{enumerate}
 \end{lemma}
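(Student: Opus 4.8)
The plan is to reduce the whole statement to the numerical coincidence already in hand: by Lemma \ref{lem:dijmeaning} the number of monomials in (\ref{eq:set}) of degree $(i,j)$ is $d_{i,j}$, and by Corollary \ref{cor:dimUq} the $(i,j)$-homogeneous component of $U^+_q$ has dimension $d_{i,j}$. Once these two facts are aligned, everything follows from the elementary principle that in a vector space $V$ of finite dimension $d$, a family of $d$ vectors spans $V$ if and only if it is linearly independent if and only if it is a basis of $V$.

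Concretely, I would fix $(i,j)\in \mathbb N^2$, let $V$ denote the $(i,j)$-homogeneous component of $U^+_q$, and consider the list of products $a_1 a_2 \cdots a_n$ ranging over all $n\in \mathbb N$ and all $a_1 \leq a_2 \leq \cdots \leq a_n$ in $\Omega$ with $\deg(a_1)+\cdots+\deg(a_n)=(i,j)$. Each such product lies in $V$. By Lemma \ref{lem:dijmeaning} this list has exactly $d_{i,j}$ entries, and by Corollary \ref{cor:dimUq} we have $\dim V = d_{i,j}$.

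Then I would close the cycle of implications. For (i)$\Rightarrow$(iii): a spanning family of $V$ with at most $d_{i,j}=\dim V$ members must consist of exactly $d_{i,j}$ distinct vectors forming a basis, so in particular the $d_{i,j}$ products are distinct and form a basis, giving (iii); clearly (iii)$\Rightarrow$(ii). For (ii)$\Rightarrow$(iii): a linearly independent family of $d_{i,j}$ vectors in the $d_{i,j}$-dimensional space $V$ is automatically a basis of $V$, giving (iii); clearly (iii)$\Rightarrow$(i). Thus (i)$\Leftrightarrow$(ii)$\Leftrightarrow$(iii).

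I do not expect a real obstacle. The only point needing a little care is bookkeeping around the phrase "number of vectors" in Lemma \ref{lem:dijmeaning}: one should keep the list of products indexed by admissible tuples $(a_1,\dots,a_n)$ distinct from the set of their a priori possibly coinciding values in $V$, and note that each of (i) and (ii) forces the count $d_{i,j}$ to be attained by genuinely distinct vectors. With that observed, the lemma is precisely the standard "spans $\Leftrightarrow$ independent $\Leftrightarrow$ basis" statement for a family of $\dim V$ vectors.
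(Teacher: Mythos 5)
Your proposal is correct and is exactly the argument the paper intends: the paper's proof is the one-line citation of Lemma \ref{lem:dijmeaning}, Corollary \ref{cor:dimUq}, and finite-dimensional linear algebra, which you have simply spelled out. The bookkeeping point you raise about possibly coinciding values is handled correctly and does not change the substance.
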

\begin{proof} By
Lemma
\ref{lem:dijmeaning}, Corollary
\ref{cor:dimUq} and the linear algebra of finite-dimensional
vector spaces.
\end{proof}

\begin{proposition}
\label{prop:useful}
Let $\Omega$ denote a feasible subset of
$U^+_q$ and let $<$ denote a linear order on $\Omega$.
Then the following {\rm (i)--(v)} are equivalent:
\begin{enumerate}
\item[\rm (i)] the equivalent conditions of
Lemma
\ref{lem:pre} hold for all $(i,j) \in \mathbb N^2$;
\item[\rm (ii)] 
the vectors 
 {\rm (\ref{eq:set})} span
$U^+_q$;
\item[\rm (iii)] the vectors 
 {\rm (\ref{eq:set})} are linearly independent;
\item[\rm (iv)] the vectors 
 {\rm (\ref{eq:set})} form a basis for $U^+_q$;
\item[\rm (v)] $\Omega$ in order $<$ forms 
a PBW basis for $U^+_q$.
\end{enumerate}
 \end{proposition}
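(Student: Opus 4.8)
The plan is to prove Proposition~\ref{prop:useful} by establishing a cycle of implications
(i)$\Rightarrow$(ii)$\Rightarrow$(iii)$\Rightarrow$(iv)$\Rightarrow$(v)$\Rightarrow$(i),
leaning throughout on the homogeneity of the vectors in (\ref{eq:set}) and on the dimension
count already in hand from Corollary~\ref{cor:dimUq} and Lemma~\ref{lem:dijmeaning}. The key
observation, noted just before Lemma~\ref{lem:dijmeaning}, is that every vector $a_1\cdots a_n$
in (\ref{eq:set}) is homogeneous, so the set of all such vectors decomposes as a disjoint union
over $(i,j)\in\mathbb N^2$ of the subsets of degree $(i,j)$; consequently any span/independence/basis
assertion about the whole family (\ref{eq:set}) is equivalent to the conjunction of the corresponding
assertions in each fixed degree $(i,j)$. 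This reduces everything to Lemma~\ref{lem:pre}, which
already handles a single degree.

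In detail: For (i)$\Rightarrow$(ii), if the degree-$(i,j)$ vectors span the $(i,j)$-component for
every $(i,j)$, then since $U^+_q=\bigoplus_{(i,j)}U^+_q{}_{(i,j)}$ the union of these spanning sets
spans $U^+_q$; and that union is precisely (\ref{eq:set}) by the homogeneity remark. The implication
(ii)$\Rightarrow$(iii) is the subtle one and I expect it to be the main obstacle: spanning a
(possibly infinite-dimensional) algebra does not in general force linear independence. The fix is to
pass to each homogeneous component, which is finite-dimensional: if (\ref{eq:set}) spans $U^+_q$, then
in each degree $(i,j)$ the finitely many degree-$(i,j)$ vectors span the finite-dimensional space
$U^+_q{}_{(i,j)}$, hence (by Lemma~\ref{lem:pre}, (i)$\Rightarrow$(ii), or directly by finite-dimensional
linear algebra together with the count $d_{i,j}$) they are linearly independent; a linear dependence
among the full family (\ref{eq:set}) would, after collecting homogeneous parts, yield a dependence in some
single degree, a contradiction. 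Then (iii)$\Rightarrow$(iv) is immediate from (ii)$\Rightarrow$(iii) applied
in reverse bookkeeping — independence plus the degreewise dimension match $d_{i,j}$ forces spanning in
each component, hence a basis — or one simply notes (iii) gives independence and, combined with the already
proven (i)$\Leftrightarrow$(ii) chain, spanning. The equivalence (iv)$\Leftrightarrow$(v) is a tautology:
a feasible $\Omega$ with linear order $<$ forms a PBW basis for $U^+_q$ precisely when the vectors
(\ref{eq:set}) form a basis for $U^+_q$, which is the content of Definition~\ref{def:PBW}. Finally
(v)$\Rightarrow$(i): if $\Omega$ in order $<$ is a PBW basis, then (\ref{eq:set}) is a basis of $U^+_q$;
restricting to the degree-$(i,j)$ part (again using homogeneity) shows the degree-$(i,j)$ vectors form a
basis of the $(i,j)$-component, i.e.\ condition (iii) of Lemma~\ref{lem:pre} holds for every $(i,j)$, which
is (i).

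The real work has already been done in Lemma~\ref{lem:dijmeaning} and Corollary~\ref{cor:dimUq}: those
supply, for each $(i,j)$, a finite number $d_{i,j}$ that simultaneously equals the dimension of
$U^+_q{}_{(i,j)}$ and the number of degree-$(i,j)$ vectors in (\ref{eq:set}). Given that numerical
coincidence, everything in Proposition~\ref{prop:useful} is a formal consequence of the principle that a
graded vector space with finite-dimensional components is spanned by a homogeneous family if and only if
that family is independent if and only if it is a basis, applied degree by degree and then reassembled.
So the proof is short: state the homogeneity reduction, invoke Lemma~\ref{lem:pre} in each degree, and run
the cycle of implications above. The only place demanding care — and worth spelling out explicitly — is
that one must not attempt to argue (ii)$\Rightarrow$(iii) globally; it must be routed through the
finite-dimensional components, which is exactly where Corollary~\ref{cor:dimUq} is indispensable.
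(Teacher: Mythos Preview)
Your proposal is correct and follows essentially the same approach as the paper: reduce to individual homogeneous components via the direct-sum decomposition of $U^+_q$, invoke Lemma~\ref{lem:pre} in each degree, and note that (iv)$\Leftrightarrow$(v) is the definition of a PBW basis. The only cosmetic difference is that the paper argues each of (ii), (iii), (iv) is directly equivalent to (i) (rather than running a cycle), which makes the write-up shorter but uses the identical ingredients.
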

\begin{proof} Condition (i) is equivalent to each of
(ii), (iii), (iv) since 
 $U^+_q$ is a direct sum
of its homogeneous components.
Conditions (iv), (v) are equivalent
by 
Definition
   \ref{prop:PBWbasis}.
\end{proof}
\noindent In Section 10 we will use Proposition
\ref{prop:useful} to obtain the alternating PBW basis for $U^+_q$.

\section{Embedding $U^+_q$ into a $q$-shuffle algebra}

\noindent In this section we recall an  embedding, due to 
Rosso \cite{rosso1, rosso},
of $U^+_q$ into a $q$-shuffle algebra.
For this $q$-shuffle algebra the underlying vector space is
a free algebra on two generators. We begin by describing this
free algebra.
\medskip

\noindent 
 Let $x,y$
denote noncommuting indeterminates, and let $\mathbb V$ denote the free
algebra with generators $x$, $y$.
By a {\it letter} in $\mathbb V$ we mean $x$ or $y$.
For $n \in \mathbb N$, a {\it word of length $n$} in $\mathbb V$
is a product of letters $v_1 v_2 \cdots v_n$.
 We interpret the word of length zero
 to be the multiplicative
 identity in $\mathbb V$; this word is called {\it trivial} and denoted by $1$.
 The vector space $\mathbb V$ has a basis consisting
 of its words; this basis is called {\it standard}.
\medskip

\noindent We mention some symmetries of the free algebra $\mathbb V$.
\begin{lemma}
\label{lem:Vsym}
There exists a unique automorphism $\sigma$ of the free algebra
$\mathbb V$ that swaps $x,y$. There exists a unique antiautomorphism
$S$ of the free algebra $\mathbb V$ that fixes each of $x,y$.
\end{lemma}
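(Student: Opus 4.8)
The statement to prove is Lemma~\ref{lem:Vsym}: the existence and uniqueness of an automorphism $\sigma$ of the free algebra $\mathbb{V}$ swapping $x,y$, and of an antiautomorphism $S$ fixing $x,y$.

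This is a straightforward universal-property argument. The plan is to invoke the defining property of the free algebra on the set $\{x,y\}$: for any algebra $\mathcal{A}$ and any function from $\{x,y\}$ to $\mathcal{A}$, there is a unique algebra homomorphism $\mathbb{V}\to\mathcal{A}$ extending it. For the automorphism, first I would apply this with $\mathcal{A}=\mathbb{V}$ and the assignment $x\mapsto y$, $y\mapsto x$ to get a unique algebra homomorphism $\sigma:\mathbb{V}\to\mathbb{V}$. Then $\sigma\circ\sigma$ is an algebra homomorphism fixing $x$ and $y$, hence by uniqueness equals the identity; so $\sigma$ is its own inverse and is therefore an automorphism. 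Uniqueness of $\sigma$ as stated is immediate from the uniqueness clause of the universal property.

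For the antiautomorphism $S$, I would introduce the opposite algebra $\mathbb{V}^{\mathrm{op}}$ (same underlying vector space, reversed multiplication). An antiautomorphism $\mathbb{V}\to\mathbb{V}$ is the same thing as an algebra isomorphism $\mathbb{V}\to\mathbb{V}^{\mathrm{op}}$. Since $\mathbb{V}^{\mathrm{op}}$ is again generated as an algebra by $x,y$, and in fact is itself free on $\{x,y\}$ (a word $v_1\cdots v_n$ in $\mathbb{V}$ corresponds to the word $v_n\cdots v_1$ in $\mathbb{V}^{\mathrm{op}}$), there is a unique algebra homomorphism $\mathbb{V}\to\mathbb{V}^{\mathrm{op}}$ sending $x\mapsto x$, $y\mapsto y$; call it $S$. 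Concretely $S$ acts on the standard basis by reversing words, $v_1v_2\cdots v_n\mapsto v_n\cdots v_2 v_1$, from which $S(ab)=S(b)S(a)$ and $S^2=\mathrm{id}$ are transparent, so $S$ is an antiautomorphism. Uniqueness again follows because any antiautomorphism fixing $x,y$ must reverse words on the standard basis, or equivalently from the uniqueness in the universal property applied to $\mathbb{V}^{\mathrm{op}}$.

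There is really no obstacle here; the only point requiring a moment's care is being explicit that ``fixes $x,y$'' pins down the map on all of $\mathbb{V}$, which is exactly the content of $\mathbb{V}$ being free on $\{x,y\}$ — every element of $\mathbb{V}$ is an $\mathbb{F}$-linear combination of words, and a (anti)homomorphism is determined on words by its values on the letters. I would present the proof in a few lines, explicitly giving the word-level formulas $\sigma(v_1\cdots v_n)=\bar v_1\cdots\bar v_n$ (where $\bar x=y$, $\bar y=x$) and $S(v_1\cdots v_n)=v_n\cdots v_1$, checking the (anti)multiplicativity and involutivity directly on words, and noting uniqueness since these formulas are forced.
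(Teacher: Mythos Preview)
Your proof is correct and is the standard universal-property argument. The paper itself states Lemma~\ref{lem:Vsym} without proof, treating it as routine; your write-up is exactly the expected justification, and the explicit word-level formulas you give match how $\sigma$ and $S$ are used throughout the paper (e.g.\ in Lemma~\ref{lem:sigSact}).
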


\noindent The free algebra $\mathbb V$ has a $\mathbb N^2$-grading
for which $x$ and $y$ are homogeneous, with degrees
$(1,0)$ and $(0,1)$  respectively. For 
 $(i,j)\in \mathbb N^2$ 
let $\mathbb V_{i,j}$ denote the $(i,j)$-homogeneous component.
These homogeneous components are described as follows.
Let $w=v_1v_2\cdots v_n$ denote a word in $\mathbb V$.
The {\it $x$-degree} of $w$ is the
cardinality of the set
$\lbrace i | 1 \leq i \leq n,\;v_i = x\rbrace$.
The $y$-degree of $w$ is similarly defined.
For $(i,j)\in \mathbb N^2$ 
the subspace $\mathbb V_{i,j}$ has a basis consisting
of the words in $\mathbb V$ that have $x$-degree $i$ and
$y$-degree $j$. The dimension of $\mathbb V_{i,j}$ is equal to
the binomial coefficient $\binom{i+j}{i}$.
\medskip

\noindent We have been discussing the free algebra $\mathbb V$. There is
another algebra structure on $\mathbb V$,
called the $q$-shuffle algebra.
This algebra was introduced by Rosso
\cite{rosso1, rosso} and described further by Green
\cite{green}. We will adopt the approach of
\cite{green}, which is well suited to our purpose.
The $q$-shuffle product
is denoted by $\star$. To describe this product, we start
with some special cases.
We have $1 \star v = v \star 1 = v$ for $v \in \mathbb V$.
 For
letters $u,v$ we have
\begin{align*}
u \star v = uv + vu q^{\langle u,v\rangle}
\end{align*}
\noindent where
\bigskip
\centerline{
\begin{tabular}[t]{c|cc}
$\langle\,,\,\rangle$ & $x$ & $y$
   \\  \hline
   $x$ &
   $2$ & $-2$
     \\
     $y$ &
      $-2$ & $2$
         \\
              \end{tabular}
              }
              \medskip

\noindent
  So 
   \begin{align}
    &x \star y = xy+ q^{-2}yx,
 \qquad \qquad \quad
   y \star x = yx + q^{-2}xy,
\label{eq:xyprod}
\\
 &x \star x = (1+q^2)xx,
 \qquad \qquad \quad
    y\star y = (1+q^2)yy.
\label{eq:xxprod}
\end{align}
\noindent For a letter
 $u$ and
  a nontrivial word $v= v_1v_2\cdots v_n$,
  \begin{align}
  \label{eq:Xcv}
  &u \star v =
  \sum_{i=0}^n v_1 \cdots v_{i} u v_{i+1} \cdots v_n
  q^{
  \langle v_1, u\rangle+
  \langle v_2, u\rangle+
  \cdots + \langle v_{i}, u\rangle},
  \\
  &v \star u = \sum_{i=0}^n v_1 \cdots v_{i} u v_{i+1} \cdots v_n
  q^{
  \langle  v_{n},u\rangle
  +
  \langle v_{n-1},u\rangle
  +
  \cdots
  +
  \langle v_{i+1},u\rangle
  }.
  \label{eq:vcX}
  \end{align}
\noindent
   For example
   \begin{align*}
& x\star (yyy)= xyyy+ q^{-2} yxyy+ q^{-4} yyxy+q^{-6}yyyx,
    \\
   & (xyx) \star y = xyxy +
                (1+q^{-2})xyyx +
                  q^{-2} yxyx.
   \end{align*}

\noindent For nontrivial words $u=u_1u_2\cdots u_r$
and $v=v_1v_2\cdots v_s$ in $\mathbb V$,
\begin{align}
\label{eq:uvcirc}
&u \star v  = u_1\bigl((u_2\cdots u_r) \star v\bigr)
+ v_1\bigl(u \star (v_2 \cdots v_s)\bigr)
q^{
\langle u_1, v_1\rangle +
\langle u_2, v_1\rangle +
\cdots
+
\langle u_r, v_1\rangle},
\\
\label{eq:uvcirc2}
&u\star v =
\bigl(u \star (v_1 \cdots v_{s-1})\bigr)v_s +
\bigl((u_1 \cdots u_{r-1}) \star v\bigr)u_r
q^{
\langle u_r, v_1\rangle +
\langle u_r, v_2\rangle + \cdots +
\langle u_r, v_s\rangle
}.
\end{align}
 For example
   \begin{align*}
   (xx)\star (yyy) =&
            xxyyy  +
            q^{-2}xyxyy +
            q^{-4}xyyxy +
            q^{-6}xyyyx +
             q^{-4}yxxyy
             \\
            &+ q^{-6}yxyxy +
             q^{-8}yxyyx +
             q^{-8}yyxxy +
             q^{-10}yyxyx+
             q^{-12}yyyxx,
       \\
       (xy)\star (xxyy) = & xyxxyy+ xxyyxy+ \lbrack 2 \rbrack_q^2 xxyxyy+
        \lbrack 3 \rbrack_q^2 xxxyyy.
   \end{align*}
\noindent 
The map $\sigma$ from Lemma
\ref{lem:Vsym}
is an automorphism of the $q$-shuffle algebra
$\mathbb V$. The map $S$ from
Lemma
\ref{lem:Vsym}
is an antiautomorphism of the $q$-shuffle algebra
$\mathbb V$. Below Lemma
\ref{lem:Vsym} we mentioned an $\mathbb N^2$-grading for the
free algebra $\mathbb V$. This is also an $\mathbb N^2$-grading
for the $q$-shuffle algebra $\mathbb V$.
\medskip

\begin{definition}
\label{def:Usub}
\rm Let $U$ denote the subalgebra of the
$q$-shuffle algebra $\mathbb V$ that is generated by $x,y$.
\end{definition}
\noindent Shortly we will see that $U \not=\mathbb V$.
\medskip

\noindent 
The algebra $U$ is described as follows.
With some work
(or by
\cite[Theorem~13]{rosso1},
\cite[p.~10]{green})
one
obtains
\begin{align}
&
x \star x \star x \star y -
\lbrack 3 \rbrack_q
x \star x\star y \star x +
\lbrack 3 \rbrack_q
x \star y \star x \star x -
y \star x \star x \star x  = 0,
\label{eq:qsc1}
\\
&
y \star y \star y \star x -
\lbrack 3 \rbrack_q
y \star y \star x \star y +
\lbrack 3 \rbrack_q
y \star x \star y \star y -
x \star y \star y \star y  = 0.
\label{eq:qsc2}
\end{align}
So in the $q$-shuffle algebra $\mathbb V$ the elements
$x,y$ satisfy the
$q$-Serre relations.
Consequently there exists an algebra homomorphism
$\natural$ from $U^+_q$ to the $q$-shuffle algebra $\mathbb V$,
that sends $A\mapsto x$ and $B\mapsto y$.
The map $\natural$ has image $U$
by Definition
\ref{def:Usub}, 
and is injective by
  \cite[Theorem~15]{rosso}. 
Therefore $\natural: U^+_q \to U$ is an algebra isomorphism.
By construction  the following diagrams commute:

\begin{equation*}
{\begin{CD}
U^+_q @>\natural  >>
               {\mathbb V}
              \\
         @V \sigma VV                   @VV \sigma V \\
         U^+_q @>>\natural >
                                  {\mathbb  V}
                        \end{CD}} \qquad \qquad \qquad
{\begin{CD}
U^+_q @>\natural  >>
               {\mathbb V}
              \\
         @V S VV                   @VV S V \\
         U^+_q @>>\natural >
                                  {\mathbb  V}
                        \end{CD}} \qquad \qquad
   \end{equation*}
\noindent 
Earlier we mentioned an $\mathbb N^2$-grading for 
both the algebra $U^+_q$ and the $q$-shuffle algebra $\mathbb V$.
These gradings are related as follows.
The algebra $U$ has an $\mathbb N^2$-grading inherited from
$U^+_q$ via $\natural$. With respect to this grading,
for
$(i,j)\in \mathbb N^2$ the $(i,j)$-homogeneous component of
$U$ is the 
$\natural$-image of the $(i,j)$-homogeneous component
of $U^+_q$. This homogeneous component is equal to
$\mathbb V_{i,j} \cap U$.
\medskip

\noindent 
Next we show that $U \not=\mathbb V$.
For $(i,j) \in \mathbb N^2 $,  the dimension of 
 $\mathbb V_{i,j}\cap U$
is $d_{i,j}$ and the dimension of
$\mathbb V_{i,j}$
is $\binom{i+j}{i}$.
There exists $(i,j)\in \mathbb N^2$ such that
$d_{i,j}<\binom{i+j}{i}$.  Therefore $U\not=\mathbb V$.
\medskip

\noindent Next we describe how the map $\natural $ acts on
the PBW basis elements 
   (\ref{eqUq:PBWintro}).
Define $\overline x = 1$
and $\overline y = -1$.
A word $v_1v_2\cdots v_n$ in $\mathbb V$
is {\it Catalan} whenever
$\overline v_1+
\overline v_2+\cdots  + 
\overline v_i$ is nonnegative for
$1 \leq i \leq n-1$ and zero for $i=n$.
In this case $n$ is even.
For $n\geq 0$ define
\begin{align*}
C_n =
  \sum v_1v_2\cdots v_{2n}
\lbrack 1\rbrack_q
\lbrack 1+\overline v_1\rbrack_q
\lbrack 1+\overline v_1+\overline v_2\rbrack_q
\cdots
\lbrack 1+\overline v_1+\overline v_2+ \cdots +\overline v_{2n}\rbrack_q,
\end{align*}
where the sum is over all the Catalan words $v_1 v_2 \cdots v_{2n}$
in $\mathbb V$ that have length $2n$. For example
\begin{align*}
C_0 = 1, \qquad \qquad 
C_1 = \lbrack 2 \rbrack_q xy, \qquad \qquad 
C_2 =
\lbrack 2 \rbrack^2_q xyxy+
\lbrack 3 \rbrack_q \lbrack 2 \rbrack^2_q xxyy.
\end{align*}
By \cite[Theorem~1.7]{catalan} the map
$\natural$ sends
\begin{align*}
E_{n\delta+\alpha_0} \mapsto  q^{-2n}(q-q^{-1})^{2n} xC_n,
\qquad \qquad 
E_{n\delta+\alpha_1} \mapsto
 q^{-2n}(q-q^{-1})^{2n} C_ny
\end{align*}
 for $n\geq 0$, and
\begin{align*}
E_{n\delta} \mapsto  -q^{-2n}(q-q^{-1})^{2n-1} C_n
\end{align*}
 for $n\geq 1$.  See
\cite[p.~696]{leclerc} for more information about $\natural$.

\section{The alternating words}

In this section we introduce a type of word in $\mathbb V$,
said to be alternating. 
We display some relations in
 the $q$-shuffle algebra
$\mathbb V$, that are satisfied by the alternating words. 
Later in the paper we will use these relations to 
show that the alternating words are contained in $U$.

\begin{definition}\rm
A word
$v_1v_2\cdots v_n$ in $\mathbb V$ is called {\it alternating}
whenever $n\geq 1$ and 
$v_{i-1} \not=v_i$ for $2 \leq i \leq n$.
Thus an alternating word has the form $\cdots xyxy\cdots$.
\end{definition}

\begin{definition} 
\label{def:WWGG}
\rm We name the alternating words as follows:
\begin{align}
&W_0 = x, \qquad W_{-1} = xyx, \qquad W_{-2} = xyxyx, \qquad \ldots
\label{eq:Wm}
\\
&W_1 = y, \qquad W_{2} = yxy, \qquad W_{3} = yxyxy, \qquad \ldots 
\label{eq:Wp}
\\
&G_{1} = yx, \qquad G_{2} = yxyx,  \qquad G_3 = yxyxyx, \qquad \ldots 
\label{eq:G}
\\
&\tilde G_{1} = xy, \qquad \tilde G_{2} =
xyxy,\qquad \tilde G_3 = xyxyxy, \qquad \ldots
\label{eq:Gt}
\end{align}
For notational convenience define $G_0=1$ and $\tilde G_0=1$.
So for $k \in \mathbb N$,
\bigskip

\centerline{
\begin{tabular}[t]{c|c|c|c|c}
   {\rm name} & {\rm description} & {\rm $x$-degree} & 
   {\rm $y$-degree} & 
   {\rm length}
   \\
   \hline
$W_{-k}$ & $xyxy\cdots x$ & $k+1$ & $k$ & $2k+1$
\\
$W_{k+1}$ & $yxyx\cdots y$ & $k$ & $k+1$ & $2k+1$
\\
$G_{k}$ & $yxyx\cdots x$ & $k$ & $k$ & $2k$
\\
$\tilde G_{k}$ & $xyxy\cdots y$ & $k$ & $k$ & $2k$
\end{tabular}
}
\bigskip
\end{definition}

\begin{lemma}
\label{lem:sigSact}
The maps $\sigma$, $S$  from
Lemma \ref{lem:Vsym}
act on the alternating
words as follows. For $k \in \mathbb N$,
\begin{enumerate}
\item[\rm(i)]
the map $\sigma $ sends
\begin{align*}
W_{-k} \mapsto W_{k+1}, \qquad \quad
W_{k+1} \mapsto W_{-k}, \qquad \quad
G_{k} \mapsto \tilde G_{k}, \qquad \quad
\tilde G_{k} \mapsto G_{k};
\end{align*}
\item[\rm (ii)] the map $S$ sends
\begin{align*}
W_{-k} \mapsto W_{-k}, \qquad \quad
W_{k+1} \mapsto W_{k+1}, \qquad \quad
G_{k} \mapsto \tilde G_{k}, \qquad \quad
\tilde G_{k} \mapsto G_{k}.
\end{align*}
\end{enumerate}
\end{lemma}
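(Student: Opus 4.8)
The plan is to first make explicit how $\sigma$ and $S$ act on an arbitrary word of the free algebra $\mathbb V$, and then to apply these descriptions to the four families of alternating words listed in Definition \ref{def:WWGG}.

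First I would note that, since $\sigma$ is an algebra automorphism of $\mathbb V$ with $\sigma(x)=y$ and $\sigma(y)=x$, it acts on any word by interchanging every occurrence of $x$ with $y$ and every occurrence of $y$ with $x$; in particular $\sigma$ sends a word of length $n$ to a word of length $n$ and swaps its $x$-degree with its $y$-degree. Next, since $S$ is an antiautomorphism of $\mathbb V$ fixing both $x$ and $y$, iterating the defining relation $(ab)^S=b^Sa^S$ gives $(v_1v_2\cdots v_n)^S = v_n^S\cdots v_2^S v_1^S = v_n\cdots v_2 v_1$; that is, $S$ reverses the order of the letters in a word, preserving its length and its $x$- and $y$-degrees.

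Then I would simply read off the action on the alternating words from the table in Definition \ref{def:WWGG}. For $\sigma$: interchanging $x$ and $y$ in $W_{-k}=xyxy\cdots x$ yields $yxyx\cdots y$, which by the table is $W_{k+1}$; likewise $W_{k+1}\mapsto W_{-k}$, while $G_k = yxyx\cdots x \mapsto xyxy\cdots y = \tilde G_k$ and symmetrically $\tilde G_k \mapsto G_k$, giving (i). For $S$: the word $W_{-k}$ has odd length $2k+1$ and begins and ends with $x$, so reversing it gives $W_{-k}$ again; similarly $W_{k+1}$ is fixed by $S$; the word $G_k$ has even length $2k$, begins with $y$ and ends with $x$, so reversing it yields a word of length $2k$ beginning with $x$ and ending with $y$, namely $\tilde G_k$, and symmetrically $\tilde G_k\mapsto G_k$. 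This gives (ii).

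The argument is entirely elementary: it amounts to the two structural observations that $\sigma$ interchanges the letters and $S$ reverses words, followed by inspection of explicitly given words. There is no genuine obstacle; the only point needing mild care is tracking the parity of the lengths, so that $S$ is correctly seen to fix $W_{-k}$ and $W_{k+1}$ but to interchange $G_k$ and $\tilde G_k$.
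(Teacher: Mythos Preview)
Your proof is correct and takes essentially the same approach as the paper: the paper's proof consists solely of the sentence ``By Lemma~\ref{lem:Vsym} and Definition~\ref{def:WWGG},'' and your argument is simply an explicit unpacking of that reference.
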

\begin{proof} By Lemma
\ref{lem:Vsym}
and Definition
\ref{def:WWGG}.
\end{proof}

\begin{lemma} 
\label{lem:free} 
For $k\in \mathbb N$ the following holds
in the free algebra $\mathbb V$:
\begin{align*}
&W_{-k} = xG_k = \tilde G_kx,
\qquad \qquad 
G_{k+1} = y W_{-k} =  W_{k+1} x,
\\
&
W_{k+1} = y\tilde G_k =  G_k y,
\qquad \qquad 
\tilde G_{k+1} = x W_{k+1} =  W_{-k} y.
\end{align*}
\end{lemma}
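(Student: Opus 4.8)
The plan is to prove all eight equalities by directly comparing words in the standard basis of $\mathbb V$, using that multiplication in the free algebra $\mathbb V$ is concatenation of words. First I would fix the shorthand $(xy)^k$ and $(yx)^k$ for the alternating words of length $2k$ that begin with $x$ and with $y$ respectively, with the convention $(xy)^0 = (yx)^0 = 1$. Then Definition \ref{def:WWGG} (together with $G_0 = \tilde G_0 = 1$) reads
\begin{align*}
G_k = (yx)^k, \qquad \tilde G_k = (xy)^k, \qquad W_{-k} = x(yx)^k, \qquad W_{k+1} = y(xy)^k.
\end{align*}

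The one point that is not a pure tautology is the pair of shift identities $x(yx)^k = (xy)^k x$ and $y(xy)^k = (yx)^k y$, which express that prepending a letter to an alternating word of even length equals appending the opposite letter; both sides spell the string $xyxy\cdots yx$ (resp.\ $yxyx\cdots xy$) of length $2k+1$. I would prove these by a one-line induction on $k$: the base case is $x\cdot 1 = 1\cdot x$, and the step uses $(xy)^{k+1} = xy(xy)^k$ together with the induction hypothesis. Granting them, each equality of the lemma is immediate: $xG_k = x(yx)^k = W_{-k}$ and $\tilde G_k x = (xy)^k x = W_{-k}$; $yW_{-k} = yx(yx)^k = (yx)^{k+1} = G_{k+1}$ and $W_{k+1}x = (yx)^k y x = (yx)^{k+1} = G_{k+1}$; and similarly for $W_{k+1} = y\tilde G_k = G_k y$ and $\tilde G_{k+1} = xW_{k+1} = W_{-k}y$. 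Alternatively, the bottom two lines are obtained from the top two by applying the automorphism $\sigma$ of Lemma \ref{lem:Vsym} and using its action on the alternating words recorded in Lemma \ref{lem:sigSact}, which cuts the work in half.

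Since this is entirely elementary word bookkeeping, there is no genuine obstacle. The only thing to keep track of is the base case $k=0$, where one must use $G_0 = \tilde G_0 = 1$, $W_0 = x$, $W_1 = y$; for instance $xG_0 = x = W_0$ and $yW_0 = yx = G_1$ then come out correctly. I would therefore present the proof as: fix the $(xy)^k$, $(yx)^k$ notation, restate Definition \ref{def:WWGG} in that notation, record the two shift identities with their short induction, and read off the eight equalities (optionally invoking $\sigma$ for the bottom half).
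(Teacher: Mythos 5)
Your proof is correct and matches the paper's, which simply says ``Use Definition \ref{def:WWGG}''; you have just made explicit the word bookkeeping (the $(xy)^k$, $(yx)^k$ notation and the shift identities) that the paper leaves to the reader. No issues.
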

\begin{proof} Use Definition
\ref{def:WWGG}.
\end{proof}

\noindent We are going to show that $U$ contains
every alternating word. As a warmup, consider 
the alternating words
$xy$ and $yx$.
Using
(\ref{eq:xyprod}),
\begin{align}
xy = q \frac{qx\star y - q^{-1} y\star x}{q^2-q^{-2}},
\qquad \qquad
yx = q \frac{q y\star x - q^{-1} x\star y}{q^2-q^{-2}}.
\label{eq:xypoly}
\end{align}
\noindent Therefore $U$ contains $xy$ and $yx$.
In order to handle longer alternating words,
we will develop some relations involving the $q$-shuffle
product. Next we describe the $q$-shuffle product of
a letter and an alternating word.

\begin{lemma} 
\label{lem:stardata1}
For $k \in \mathbb N$,
\begin{align}
x \star W_{-k} &= (1+q^2)\sum_{i=0}^k \tilde G_i x^2 G_{k-i},
\label{eq:xWm}
\\
x \star W_{k+1} &=
q^{-2} G_{k+1}
+
\tilde G_{k+1}
+ 
           (1+q^{-2})\sum_{i=0}^{k-1}  W_{i+1} x^2 W_{k-i},
\label{eq:xWp}
\\
x \star G_{k} &= W_{-k} +
           (1+q^{-2})\sum_{i=0}^{k-1} W_{i+1} x^2 G_{k-i-1},
\label{eq:xG}
\\
x \star {\tilde G}_{k} &= W_{-k} +
           (1+q^{2})\sum_{i=0}^{k-1} \tilde G_{k-i-1} x^2 W_{i+1}
\label{eq:xGt}
\end{align}
and
\begin{align}
y \star W_{-k} &= G_{k+1} + q^{-2} \tilde G_{k+1} + 
           (1+q^{-2})\sum_{i=0}^{k-1} W_{-i} y^2 W_{i-k+1},
\\
y \star W_{k+1} &= (1+q^2)\sum_{i=0}^k G_i y^2 \tilde G_{k-i},
\\
y \star G_{k} &= W_{k+1} +
           (1+q^{2})\sum_{i=0}^{k-1} G_{k-i-1} y^2 W_{-i},
\\
y \star \tilde G_{k} &= W_{k+1} +
           (1+q^{-2})\sum_{i=0}^{k-1} W_{-i} y^2 \tilde G_{k-i-1}
\end{align}
\end{lemma}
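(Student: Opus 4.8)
The plan is to prove all eight identities in Lemma~\ref{lem:stardata1} by a uniform, direct computation using the formulas (\ref{eq:Xcv}) and (\ref{eq:vcX}) for the $q$-shuffle product of a letter with a word. Take for instance (\ref{eq:xWm}): here $W_{-k}=xyxy\cdots x$ has length $2k+1$, and by (\ref{eq:Xcv}) the product $x\star W_{-k}$ is the sum over the $2k+2$ insertion positions $i=0,1,\ldots,2k+1$ of the word obtained by inserting $x$ after the $i$th letter of $W_{-k}$, weighted by $q$ raised to $\langle v_1,x\rangle+\cdots+\langle v_i,x\rangle$, where $v_1v_2\cdots v_{2k+1}=W_{-k}$. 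Since $\langle x,x\rangle=2$ and $\langle y,x\rangle=-2$, and the letters of $W_{-k}$ alternate $x,y,x,y,\ldots,x$, the exponent of $q$ telescopes: after $i$ letters the partial sum is $2$ if $i$ is odd and $0$ if $i$ is even. Meanwhile, inserting $x$ after an even-index position (i.e., right after a $y$, or at the very front) creates a factor $xx$ and splits $W_{-k}$ into a prefix of the form $\tilde G_j$ and a suffix of the form $G_{k-j}$ (using Lemma~\ref{lem:free} to recognize these pieces), contributing $\tilde G_j x^2 G_{k-j}$; inserting after an odd-index position (right after an $x$) likewise creates $xx$ and gives the same word with a shifted split. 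Collecting the two insertion positions that yield each fixed word $\tilde G_i x^2 G_{k-i}$, the weights are $1$ and $q^2$, summing to $1+q^2$, which reproduces the right-hand side of (\ref{eq:xWm}).

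The remaining seven identities are handled the same way, tracking two bookkeeping facts in each case. First, the parity pattern of the partial sums of $\langle v_1,u\rangle,\ldots$ along an alternating word is eventually $2$-periodic, taking only two values (either $\{0,2\}$ or $\{0,-2\}$ depending on whether $u=x$ or $u=y$ and on the first letter of the word), so the $q$-powers that appear are among $1,q^2,q^{-2}$ and the collected coefficients come out as $1+q^2$, $1+q^{-2}$, or a bare $1$/$q^{-2}$ as displayed. Second, one must identify which insertions produce a ``doubled letter'' $x^2$ or $y^2$ in the interior — these give the sum terms — versus the one or two insertions at the extreme ends or at the spot where $u$ differs from its neighbour, which produce again an alternating word (or one of $G_{k+1},\tilde G_{k+1}$); Lemma~\ref{lem:free} is exactly the tool that lets us name all these resulting words as $W_{\pm}$, $G$, $\tilde G$, or products $\tilde G_i x^2 G_j$, etc. The eight statements also come in $\sigma$-related and $S$-related pairs via Lemma~\ref{lem:sigSact} and the fact that $\sigma,S$ are (anti)automorphisms of the $q$-shuffle algebra; concretely, applying $\sigma$ (which swaps $x\leftrightarrow y$ and sends $q\langle\,,\,\rangle$ to itself) turns each $x\star(\cdot)$ identity into the corresponding $y\star(\cdot)$ identity, and applying $S$ reverses words and swaps $G_k\leftrightarrow\tilde G_k$; so in fact only two or three of the eight need to be computed from scratch and the rest follow formally.

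I would organize the write-up as: (1) fix $k$ and expand $x\star W_{-k}$ via (\ref{eq:Xcv}), compute the telescoping exponents explicitly, pair up insertion positions, and conclude (\ref{eq:xWm}); (2) repeat for $x\star W_{k+1}$, which is slightly richer because the last letter of $W_{k+1}$ is $y$, so insertion at the far right gives $\tilde G_{k+1}$ and insertion just before the final $y$ gives $q^{-2}G_{k+1}$, accounting for the two ``stray'' terms, while interior insertions give the $W_{i+1}x^2W_{k-i}$ sum; (3) likewise for $x\star G_k$ and $x\star\tilde G_k$, where the single stray term is $W_{-k}$; (4) obtain the four $y\star(\cdot)$ identities by applying $\sigma$ to the four $x\star(\cdot)$ identities, citing Lemma~\ref{lem:sigSact}(i). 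A small verification against the worked examples already in the text (e.g.\ $x\star(yyy)$ and $(xyx)\star y$, which are the $k=1$ cases of $x\star W_2$ and $y\star W_{-1}$ after reindexing) serves as a sanity check.

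The main obstacle is purely combinatorial bookkeeping: correctly matching insertion positions to the summand words and getting the index ranges and the two boundary terms exactly right in (\ref{eq:xWp}) (and its $\sigma$-images), since an off-by-one in the split index $i$ or a misidentification of which end produces $G_{k+1}$ versus $\tilde G_{k+1}$ would corrupt the stated formula. There is no conceptual difficulty — no induction, no appeal to $U^+_q$ structure, just careful application of (\ref{eq:Xcv}) plus the renaming dictionary of Lemma~\ref{lem:free} — but the alternating structure means one must be scrupulous about parities (which letter sits in an odd versus even position) throughout.
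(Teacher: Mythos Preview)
Your approach is correct and is essentially the same as the paper's: the paper's entire proof is ``Use (\ref{eq:Xcv}) and Definition~\ref{def:WWGG}'', and you have unpacked exactly that computation, with the added observation that the four $y\star(\cdot)$ identities follow from the four $x\star(\cdot)$ identities by applying $\sigma$. One small slip in your sketch of (\ref{eq:xWp}): insertion at the far \emph{left} (position $i=0$) gives $xW_{k+1}=\tilde G_{k+1}$ with weight $1$, while insertion at the far \emph{right} (position $i=2k+1$) gives $W_{k+1}x=G_{k+1}$ with weight $q^{-2}$; you swapped these in your description, which is precisely the bookkeeping hazard you flagged yourself.
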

\begin{proof} 
  Use (\ref{eq:Xcv})
and Definition
\ref{def:WWGG}.
\end{proof}

\noindent Next we describe the $q$-shuffle product of an alternating 
word and a letter.

\begin{lemma}
\label{lem:stardata2}
For $k \in \mathbb N$,
\begin{align}
 W_{-k} \star x &= (1+q^2)\sum_{i=0}^k 
 {\tilde G}_{k-i} x^2 G_{i},
\\
W_{k+1} \star x &=
 G_{k+1}
+
q^{-2} {\tilde G}_{k+1}
+ 
           (1+q^{-2})\sum_{i=0}^{k-1} 
	   W_{k-i} x^2 W_{i+1},
\\
 G_{k} \star x &= W_{-k} +
           (1+q^{2})\sum_{i=0}^{k-1} 
	   W_{i+1} x^2 G_{k-i-1},
\\
{\tilde G}_{k} \star x &= W_{-k} +
           (1+q^{-2})\sum_{i=0}^{k-1}
	  {\tilde  G}_{k-i-1} x^2 W_{i+1}
\end{align}
and
\begin{align}
 W_{-k} \star y&=
 q^{-2}  G_{k+1} + 
 {\tilde G}_{k+1}+
           (1+q^{-2})\sum_{i=0}^{k-1}
	   W_{i-k+1} y^2 W_{-i},
\\
 W_{k+1} \star y&= (1+q^2)\sum_{i=0}^k 
G_{k-i} y^2 {\tilde G}_{i},
\\
 G_{k} \star y &= W_{k+1} +
           (1+q^{-2})\sum_{i=0}^{k-1}
	   G_{k-i-1} y^2 W_{-i},
	   \\
{\tilde G}_{k} \star y&= W_{k+1} +
           (1+q^{2})\sum_{i=0}^{k-1}
	   W_{-i} y^2 {\tilde G}_{k-i-1}.
\end{align}
\end{lemma}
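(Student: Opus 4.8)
The plan is to deduce this lemma from Lemma~\ref{lem:stardata1} by applying the antiautomorphism $S$. Recall from Lemma~\ref{lem:Vsym} and the subsequent discussion that the $\mathbb F$-linear map $S$, which reverses each word, is simultaneously an antiautomorphism of the free algebra $\mathbb V$ and of the $q$-shuffle algebra $\mathbb V$, and that $S$ fixes each of $x,y$. Consequently, for a letter $u$ and a word $v$ we have $S(u\star v)=S(v)\star S(u)=S(v)\star u$, so that each identity of Lemma~\ref{lem:stardata1}, of the form $u\star(\text{alternating word})=(\text{linear combination of words})$, transforms under $S$ into an identity of the form $(\text{alternating word})\star u=(\text{linear combination of words})$: on the left the $q$-shuffle antiautomorphism property moves the letter $u$ to the right, while on the right the free-algebra antiautomorphism property reverses each concatenated word. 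By Lemma~\ref{lem:sigSact}(ii), $S$ fixes each $W_{-k}$ and each $W_{k+1}$, interchanges $G_k$ with $\tilde G_k$, and fixes $x^2$ and $y^2$, which specifies precisely how both sides are to be rewritten.

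Concretely, I would process the eight identities of Lemma~\ref{lem:stardata1} one at a time. For instance, applying $S$ to~(\ref{eq:xWm}) and using $S(\tilde G_i\,x^2\,G_{k-i})=\tilde G_{k-i}\,x^2\,G_i$ gives $W_{-k}\star x=(1+q^2)\sum_{i=0}^k\tilde G_{k-i}\,x^2\,G_i$, the first displayed identity; applying $S$ to~(\ref{eq:xWp}) gives the formula for $W_{k+1}\star x$; and, since $S$ swaps $G\leftrightarrow\tilde G$, applying $S$ to~(\ref{eq:xG}) yields the formula for $\tilde G_k\star x$ while applying $S$ to~(\ref{eq:xGt}) yields the formula for $G_k\star x$. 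The four identities involving $\star\,y$ come in the same way from the four $\star$-with-$y$ identities of Lemma~\ref{lem:stardata1}. In each case the only thing to verify is that reversing a word such as $v_1\cdots v_i\,u\,v_{i+1}\cdots v_n$ together with the substitution $G\leftrightarrow\tilde G$ reproduces the claimed summand after re-indexing the finite sum, which is routine; one could equally well halve the work by obtaining the $\star\,y$ identities from the $\star\,x$ ones via the automorphism $\sigma$.

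An alternative, self-contained route, essentially the same argument used to establish Lemma~\ref{lem:stardata1}, is to expand $v\star u$ directly from~(\ref{eq:vcX}) for an alternating word $v=v_1\cdots v_n$ and a letter $u$: the sum runs over the $n+1$ positions at which $u$ may be inserted, each term carrying the factor $q^{\langle v_n,u\rangle+\langle v_{n-1},u\rangle+\cdots+\langle v_{i+1},u\rangle}$, and one then groups the resulting words by the two-letter block pattern around the inserted letter, recognizing the blocks via Definition~\ref{def:WWGG}. Since $v$ alternates, the exponents $\langle v_j,u\rangle$ alternate between $2$ and $-2$, so these $q$-powers telescope and the sum collapses to the displayed closed forms with coefficients $1+q^2$ or $1+q^{-2}$. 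I do not expect a genuine obstacle in either approach; the only care required is bookkeeping — keeping straight which of the $G$/$\tilde G$ formulas one lands on after applying $S$, or correctly collecting the telescoping powers of $q$ and the coefficients in the direct expansion.
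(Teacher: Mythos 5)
Your proposal is correct, but your primary route differs from the paper's. The paper proves Lemma~\ref{lem:stardata2} exactly the way you describe in your ``alternative, self-contained route'': expand $v\star u$ directly from~(\ref{eq:vcX}) and recognize the blocks via Definition~\ref{def:WWGG}, just as Lemma~\ref{lem:stardata1} is proved from~(\ref{eq:Xcv}). Your main argument instead deduces Lemma~\ref{lem:stardata2} from Lemma~\ref{lem:stardata1} by applying $S$, and this works: $S$ is an antiautomorphism of the $q$-shuffle algebra fixing $x,y$, so $S(u\star v)=S(v)\star u$ turns each left-multiplication identity into a right-multiplication one, while on the right-hand sides $S$ acts as the word-reversal antiautomorphism of the free algebra, fixing $W_{\pm}$, swapping $G_k\leftrightarrow\tilde G_k$, and fixing $x^2,y^2$ by Lemma~\ref{lem:sigSact}(ii). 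You correctly track that (\ref{eq:xG}) lands on the $\tilde G_k\star x$ formula and (\ref{eq:xGt}) on the $G_k\star x$ formula after the swap. Your approach buys a shorter verification at the cost of depending on Lemma~\ref{lem:stardata1} and on the (stated but unproved in the paper) fact that $S$ is an antiautomorphism of the $q$-shuffle structure; the paper's direct expansion is self-contained and symmetric with the proof of Lemma~\ref{lem:stardata1}. Either way the result is established, and your use of $S$ is entirely in the spirit of the paper, which applies $\sigma$, $S$, and $\sigma S$ in exactly this manner to halve the work in Lemmas~\ref{lem:transGW}, \ref{lem:ggww} and Propositions~\ref{prop:attract}, \ref{prop:GGWW}.
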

  \begin{proof}
Use (\ref{eq:vcX}) 
and Definition
\ref{def:WWGG}.
\end{proof}

\begin{proposition}
\label{prop:rel1}
For $k \in \mathbb N$
the following holds in the $q$-shuffle algebra $\mathbb V$:
\begin{align}
&
 \lbrack  W_0,  W_{k+1}\rbrack= 
\lbrack  W_{-k},  W_{1}\rbrack=
(1-q^{-2})({\tilde G}_{k+1} -  G_{k+1}),
\label{eq:3p1vv}
\\
&
\lbrack  W_0,  G_{k+1}\rbrack_q= 
\lbrack {{\tilde G}}_{k+1},  W_{0}\rbrack_q= 
 (q-q^{-1})W_{-k-1},
\label{eq:3p2vv}
\\
&
\lbrack G_{k+1},  W_{1}\rbrack_q= 
\lbrack  W_{1}, { {\tilde G}}_{k+1}\rbrack_q= 
(q-q^{-1}) W_{k+2}.
\label{eq:3p3vv}
\end{align}
\end{proposition}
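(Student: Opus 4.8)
The plan is to derive all three displays of Proposition~\ref{prop:rel1} directly from the explicit $q$-shuffle formulas in Lemmas~\ref{lem:stardata1} and~\ref{lem:stardata2}, together with the factorizations of alternating words in the \emph{free} algebra from Lemma~\ref{lem:free}. The key observation is that each bracket in the proposition involves a letter ($W_0=x$ or $W_1=y$) shuffled against an alternating word, and those shuffle products are given term-by-term in Lemmas~\ref{lem:stardata1}, \ref{lem:stardata2}. So for, say, $\lbrack W_0,W_{k+1}\rbrack = x\star W_{k+1} - W_{k+1}\star x$, I would subtract equation~(\ref{eq:xWp}) from its companion in Lemma~\ref{lem:stardata2}. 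The "bulk" double sums $(1+q^{-2})\sum_{i=0}^{k-1} W_{i+1}x^2 W_{k-i}$ and $(1+q^{-2})\sum_{i=0}^{k-1} W_{k-i}x^2 W_{i+1}$ are reindexings of one another (replace $i$ by $k-1-i$), so they cancel exactly, leaving only the two "boundary" terms $q^{-2}G_{k+1}+\tilde G_{k+1}$ minus $G_{k+1}+q^{-2}\tilde G_{k+1} = (q^{-2}-1)G_{k+1} + (1-q^{-2})\tilde G_{k+1} = (1-q^{-2})(\tilde G_{k+1}-G_{k+1})$. The second equality $\lbrack W_{-k},W_1\rbrack = (1-q^{-2})(\tilde G_{k+1}-G_{k+1})$ follows the same way from $y\star W_{-k}$ and $W_{-k}\star y$ (the $y$-formulas in the two lemmas), or alternatively by applying the antiautomorphism $S$ from Lemma~\ref{lem:sigSact} to the first equality and noting $S$ reverses products while fixing $W_0,W_1$ and swapping $G,\tilde G$.

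For (\ref{eq:3p2vv}), I would compute the $q$-commutator $\lbrack W_0,G_{k+1}\rbrack_q = q\,x\star G_{k+1} - q^{-1}G_{k+1}\star x$ using~(\ref{eq:xG}) with $k$ replaced by $k+1$ and its counterpart in Lemma~\ref{lem:stardata2}. Here the two sums $\sum_{i=0}^{k} W_{i+1}x^2 G_{k-i}$ appear with coefficients $q(1+q^{-2}) = q+q^{-1}$ and $q^{-1}(1+q^2) = q+q^{-1}$ respectively, so after the reindexing $i \leftrightarrow k-i$ they again cancel, and the leading terms $W_{-k-1}$ (present in both~(\ref{eq:xG}) and its partner, since $x\star G_{k+1}$ and $G_{k+1}\star x$ each start with $W_{-(k+1)}$) combine as $q\,W_{-k-1} - q^{-1}W_{-k-1} = (q-q^{-1})W_{-k-1}$. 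The companion identity $\lbrack \tilde G_{k+1},W_0\rbrack_q = (q-q^{-1})W_{-k-1}$ is then obtained by applying $S$ (which sends $G_{k+1}\mapsto\tilde G_{k+1}$, fixes $W_0$ and $W_{-k-1}$, and reverses the product, turning $\lbrack X,Y\rbrack_q$ into $\lbrack Y^S,X^S\rbrack_q$). Proposition~(\ref{eq:3p3vv}) is handled identically using the $G\star y$, $y\star G$, $\tilde G\star y$, $y\star\tilde G$ formulas, or—most cleanly—by applying the automorphism $\sigma$ of Lemma~\ref{lem:sigSact} to~(\ref{eq:3p2vv}), since $\sigma$ swaps $W_0\leftrightarrow W_1$, swaps $G\leftrightarrow\tilde G$, and sends $W_{-k-1}\mapsto W_{k+2}$.

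Concretely, the steps in order: (1) fix $k\in\mathbb N$; (2) prove~(\ref{eq:3p1vv}) by subtracting the relevant $W\star x$ formula from the $x\star W$ formula, identifying the cancelling double sums via the substitution $i\mapsto k-1-i$, and simplifying the surviving boundary terms; (3) get the second equality in~(\ref{eq:3p1vv}) either by the analogous $y$-computation or by applying $S$; (4) prove the first equality in~(\ref{eq:3p2vv}) by the same mechanism for the $q$-commutator, checking that the scalar factors $q(1+q^{-2})$ and $q^{-1}(1+q^2)$ agree so the sums cancel; (5) apply $S$ to get the second equality in~(\ref{eq:3p2vv}); (6) apply $\sigma$ to all of~(\ref{eq:3p2vv}) to obtain~(\ref{eq:3p3vv}), using Lemma~\ref{lem:sigSact} to track the images.

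The main obstacle is purely bookkeeping: matching the index ranges and shift conventions in Lemmas~\ref{lem:stardata1} and~\ref{lem:stardata2} so the reindexed double sums line up exactly (one must be careful that $x\star G_{k+1}$ requires substituting $k\to k+1$ in~(\ref{eq:xG}), changing the summation range to $\sum_{i=0}^{k}$, and that the "$x^2$" in the middle of each summand matches after the reflection of the index). There is no conceptual difficulty and no estimate to control; the only risk is a sign or a power-of-$q$ slip, which the $S$- and $\sigma$-symmetry arguments help cross-check, since the companion identities in each display must come out consistent with the direct computation.
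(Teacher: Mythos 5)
Your proposal is correct and is exactly the paper's approach: the paper's proof is the one-line remark that the relations are routinely checked from Lemmas \ref{lem:stardata1} and \ref{lem:stardata2}, and your computation (cancelling the reindexed double sums, matching the scalars $q(1+q^{-2})=q^{-1}(1+q^{2})$, and transporting the companion identities by the symmetries of Lemma \ref{lem:sigSact}) is the intended verification. One small correction: the antiautomorphism $S$ sends the identity $\lbrack W_0,W_{k+1}\rbrack=(1-q^{-2})(\tilde G_{k+1}-G_{k+1})$ back to itself rather than to $\lbrack W_{-k},W_1\rbrack=\cdots$ (since $S$ fixes $W_0$ and $W_{k+1}$ and merely negates both sides), so for that step you should use $\sigma$ or $\sigma S$ instead --- or just rely on the direct $y$-computation you already give, which is fine.
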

\begin{proof} These relations are routinely checked using
Lemmas 
\ref{lem:stardata1},
\ref{lem:stardata2}.
\end{proof}

\begin{note}\rm
We have a comment about notation.
In 
Proposition
\ref{prop:rel1} we used the commutator and $q$-commutator notation.
Throughout the 
paper, for any equation
in the $q$-shuffle algebra $\mathbb V$ that involves a commutator
or $q$-commutator, it is understand that these commutators
are computed using the $q$-shuffle product $\star$.
\end{note}

\noindent We just displayed some relations for the
alternating words in $\mathbb V$. Shortly
we will display some more general relations for the
alternating words. To obtain these relations
we use the following identities.

\begin{lemma}
\label{lem:prep}
For $k,\ell \in \mathbb N$,
\begin{align*}
W_{-k} \star W_{-\ell} &=  x(G_k \star W_{-\ell}) + x(W_{-k}\star G_\ell)q^2
= q^2(\tilde G_k \star W_{-\ell})x + (W_{-k} \star \tilde G_\ell)x,
\\
W_{-k} \star W_{\ell+1} &=  x(G_k \star W_{\ell+1}) + y(W_{-k}\star \tilde G_\ell)q^{-2}
= q^{-2}(\tilde G_k \star W_{\ell+1})x + (W_{-k} \star  G_\ell)y,
\\
W_{k+1} \star W_{-\ell} &=  y(\tilde G_k \star W_{-\ell}) + x(W_{k+1}\star G_\ell)q^{-2}
= q^{-2}( G_k \star W_{-\ell})y + (W_{k+1} \star \tilde G_\ell)x,
\\
W_{k+1} \star W_{\ell+1} &=  y(\tilde G_k \star W_{\ell+1}) + y(W_{k+1}\star \tilde  G_\ell)q^{2}
= q^{2}( G_k \star W_{\ell+1})y + (W_{k+1} \star  G_\ell)y
\end{align*}
\noindent and
\begin{align*}
W_{-k} \star G_{\ell+1} &= x(G_k \star G_{\ell+1}) + y(W_{-k}\star W_{-\ell})q^{-2}
=  (\tilde G_k \star G_{\ell+1})x + (W_{-k}\star W_{\ell+1})x,
\\
W_{-k} \star \tilde G_{\ell+1} &= x(G_k \star \tilde G_{\ell+1}) + 
x(W_{-k}\star W_{\ell+1})q^{2}
=  (\tilde G_k \star \tilde G_{\ell+1})x + (W_{-k}\star W_{-\ell})y,
\\
W_{k+1} \star G_{\ell+1} &= y(\tilde G_k \star G_{\ell+1}) + y(W_{k+1}\star W_{-\ell})q^{2}
=  (G_k \star G_{\ell+1})y + (W_{k+1}\star W_{\ell+1})x,
\\
W_{k+1} \star \tilde G_{\ell+1} &= y(\tilde G_k \star \tilde G_{\ell+1}) +
x(W_{k+1}\star W_{\ell+1})q^{-2}
=  ( G_k \star \tilde G_{\ell+1})y + (W_{k+1}\star W_{-\ell})y
\end{align*}
\noindent and
\begin{align*}
G_{k+1} \star W_{-\ell} &= y(W_{-k} \star W_{-\ell})+ x(G_{k+1} \star G_\ell) 
= q^2(W_{k+1} \star W_{-\ell})x + (G_{k+1} \star \tilde G_\ell)x,
\\
G_{k+1} \star W_{\ell+1} &= y(W_{-k} \star W_{\ell+1})+ y(G_{k+1} \star \tilde G_\ell) 
= q^{-2}(W_{k+1} \star W_{\ell+1})x + (G_{k+1} \star  G_\ell)y,
\\
\tilde G_{k+1} \star W_{-\ell} &= x(W_{k+1} \star W_{-\ell})+ x(\tilde G_{k+1} \star G_\ell) 
= q^{-2}(W_{-k} \star W_{-\ell})y + (\tilde G_{k+1} \star \tilde G_\ell)x,
\\
\tilde G_{k+1} \star W_{\ell+1} &= x(W_{k+1} \star W_{\ell+1})+ y(\tilde G_{k+1} \star \tilde G_\ell) 
= q^{2}(W_{-k} \star W_{\ell+1})y + (\tilde G_{k+1} \star  G_\ell)y
\end{align*}
\noindent and
\begin{align*}
G_{k+1} \star G_{\ell+1} &= y(W_{-k} \star G_{\ell+1}) + y(G_{k+1} \star W_{-\ell})
= (W_{k+1} \star G_{\ell+1})x + (G_{k+1} \star W_{\ell+1})x,
\\
G_{k+1} \star \tilde G_{\ell+1} &= y(W_{-k} \star \tilde G_{\ell+1}) + 
x(G_{k+1} \star W_{\ell+1})
= (W_{k+1} \star \tilde G_{\ell+1})x + (G_{k+1} \star W_{-\ell})y,
\\
\tilde G_{k+1} \star G_{\ell+1} &= x(W_{k+1} \star G_{\ell+1}) + y(\tilde G_{k+1} \star W_{-\ell})
= (W_{-k} \star G_{\ell+1})y + (\tilde G_{k+1} \star W_{\ell+1})x,
\\
\tilde G_{k+1} \star \tilde G_{\ell+1} &= x(W_{k+1} \star \tilde G_{\ell+1}) + x(\tilde G_{k+1} \star W_{\ell+1})
= (W_{-k} \star \tilde G_{\ell+1})y + (\tilde G_{k+1} \star W_{-\ell})y.
\end{align*}
\end{lemma}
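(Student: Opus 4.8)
The plan is to obtain all sixteen identities as immediate instances of the two recursive expansion formulas (\ref{eq:uvcirc}) and (\ref{eq:uvcirc2}) for the $q$-shuffle product, together with the free-algebra factorizations of the alternating words recorded in Lemma \ref{lem:free}. Recall that (\ref{eq:uvcirc}) rewrites a product $u \star v$ in terms of the leading letter of $u$, the leading letter of $v$, and $\star$-products of strictly shorter words, while (\ref{eq:uvcirc2}) does the same using trailing letters; and Lemma \ref{lem:free} supplies precisely the needed leading- and trailing-letter factorizations $W_{-k} = xG_k = \tilde G_k x$, $G_{k+1} = yW_{-k} = W_{k+1}x$, $W_{k+1} = y\tilde G_k = G_k y$, $\tilde G_{k+1} = xW_{k+1} = W_{-k}y$. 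So for each product — say $W_{-k}\star W_{-\ell}$ — I would substitute the leading-letter factorizations $W_{-k} = xG_k$ and $W_{-\ell} = xG_\ell$ into (\ref{eq:uvcirc}) to produce the first displayed expression, and the trailing-letter factorizations $W_{-k} = \tilde G_k x$, $W_{-\ell} = \tilde G_\ell x$ into (\ref{eq:uvcirc2}) to produce the second.

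The only step that is not a direct substitution is the evaluation of the $q$-power exponents. In (\ref{eq:uvcirc}) the exponent attached to the second term is $\sum_i \langle u_i, v_1\rangle = a\langle x, v_1\rangle + b\langle y, v_1\rangle$, where $(a,b)$ is the degree of $u$; hence it equals $2(a-b)$ if $v_1 = x$ and $2(b-a)$ if $v_1 = y$. By the table in Definition \ref{def:WWGG}, $a - b$ equals $+1$, $-1$, $0$, $0$ according as $u$ has type $W_{-k}$, $W_{k+1}$, $G_k$, $\tilde G_k$, so the exponent always lies in $\{2,0,-2\}$ and is exactly the one appearing in the lemma. The exponents in (\ref{eq:uvcirc2}) are computed the same way, now using the degree of $v$ and the trailing letter of $u$. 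With this in hand, running through the sixteen products is purely mechanical bookkeeping: for each factor one records its type, reads off the leading or trailing letter and the associated exponent from the two displays above, and assembles the result.

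I expect the only genuine — and still minor — obstacle to be the boundary behaviour at $k = 0$ or $\ell = 0$. There $W_0 = x$ and $W_1 = y$ are single letters and $G_0 = \tilde G_0 = 1$ is the trivial word, so (\ref{eq:uvcirc}) and (\ref{eq:uvcirc2}) degenerate; one must check that with the conventions $G_0 = \tilde G_0 = 1$ and $1 \star v = v \star 1 = v$ the expansion formulas and the factorizations of Lemma \ref{lem:free} stay consistent. For example $W_{-0}\star W_{-\ell} = x(G_0 \star W_{-\ell}) + x(W_0 \star G_\ell)q^2$ collapses, when moreover $\ell = 0$, to $x \star x = (1+q^2)xx$, matching (\ref{eq:xxprod}). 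Once these degenerate cases are dealt with, the cases with $k,\ell \geq 1$ are uniform and the verification is complete. (Note that whenever a $G_{k+1}$ or $\tilde G_{\ell+1}$ appears the index is at least $1$, so no degeneracy can arise from those factors.)
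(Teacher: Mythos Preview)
Your proposal is correct and follows exactly the paper's approach: the paper's proof is the one-line ``Use (\ref{eq:uvcirc}), (\ref{eq:uvcirc2}),'' and you have simply unpacked that directive, supplying the factorizations from Lemma~\ref{lem:free} and the exponent computation that the paper leaves implicit. Your handling of the boundary cases $k=0$ or $\ell=0$ is also appropriate and is not discussed in the paper.
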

\begin{proof} Use 
(\ref{eq:uvcirc}), 
(\ref{eq:uvcirc2}).
\end{proof}

\begin{proposition}
\label{prop:rel2}
For $k, \ell \in \mathbb N$ the
following relations hold in the $q$-shuffle algebra $\mathbb V$:
\begin{align}
&
\lbrack  W_{-k},  W_{-\ell}\rbrack=0,  \qquad 
\lbrack  W_{k+1},  W_{\ell+1}\rbrack= 0,
\label{eq:3p4vv}
\\
&
\lbrack  W_{-k},  W_{\ell+1}\rbrack+
\lbrack W_{k+1},  W_{-\ell}\rbrack= 0,
\label{eq:3p5vv}
\\
&
\lbrack  W_{-k},  G_{\ell+1}\rbrack+
\lbrack G_{k+1},  W_{-\ell}\rbrack= 0,
\label{eq:3p6vv}
\\
&
\lbrack W_{-k},  {\tilde G}_{\ell+1}\rbrack+
\lbrack  {\tilde G}_{k+1},  W_{-\ell}\rbrack= 0,
\label{eq:3p7vv}
\\
&
\lbrack  W_{k+1},  G_{\ell+1}\rbrack+
\lbrack   G_{k+1}, W_{\ell+1}\rbrack= 0,
\label{eq:3p8vv}
\\
&
\lbrack  W_{k+1},  {\tilde G}_{\ell+1}\rbrack+
\lbrack  {\tilde G}_{k+1},  W_{\ell+1}\rbrack= 0,
\label{eq:3p9vv}
\\
&
\lbrack  G_{k+1},  G_{\ell+1}\rbrack=0,
\qquad 
\lbrack {\tilde G}_{k+1},  {\tilde G}_{\ell+1}\rbrack= 0,
\label{eq:3p10vv}
\\
&
\lbrack {\tilde G}_{k+1},  G_{\ell+1}\rbrack+
\lbrack  G_{k+1},  {\tilde G}_{\ell+1}\rbrack= 0.
\label{eq:3p11v}
\end{align}
\end{proposition}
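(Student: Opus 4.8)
The plan is to prove the ten relations in \textbf{(\ref{eq:3p4vv})}--\textbf{(\ref{eq:3p11v})} simultaneously by induction on $k+\ell$ (equivalently, on the sum of the lengths of the two alternating words appearing in each relation). Before starting the induction I would cut the workload using the symmetries $\sigma$ and $S$ of Lemma~\ref{lem:sigSact}. Since $\sigma$ is an automorphism and $S$ an antiautomorphism of the $q$-shuffle algebra $\mathbb V$, each sends a relation of the form ``(some commutator expression) $=0$'' to another such relation, and by Lemma~\ref{lem:sigSact} they permute the families $W_{-k}$, $W_{k+1}$, $G_k$, $\tilde G_k$ in a controlled way. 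Tracking the action one finds that $\langle \sigma, S\rangle$ has only a handful of orbits on \textbf{(\ref{eq:3p4vv})}--\textbf{(\ref{eq:3p11v})} (for instance \textbf{(\ref{eq:3p6vv})}, \textbf{(\ref{eq:3p7vv})}, \textbf{(\ref{eq:3p8vv})}, \textbf{(\ref{eq:3p9vv})} form a single orbit, and the two halves of \textbf{(\ref{eq:3p4vv})}, resp. \textbf{(\ref{eq:3p10vv})}, are swapped by $\sigma$), so it suffices to establish one representative of each orbit, say the first half of \textbf{(\ref{eq:3p4vv})}, together with \textbf{(\ref{eq:3p5vv})}, \textbf{(\ref{eq:3p6vv})}, the first half of \textbf{(\ref{eq:3p10vv})}, and \textbf{(\ref{eq:3p11v})}.

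For the base cases I take $k=0$ or $\ell=0$, so that one of the two words is $W_0=x$, $W_1=y$, or (with the convention of Definition~\ref{def:WWGG}) $G_0=\tilde G_0=1$. The instances involving $G_0$ or $\tilde G_0$ are immediate since $1$ is central in $\mathbb V$. Several of the remaining boundary instances are trivial by the antisymmetry of the commutator (whenever the two arguments coincide, e.g. $[W_{-k},W_{k+1}]+[W_{k+1},W_{-k}]=0$ from \textbf{(\ref{eq:3p5vv})} with $k=\ell$). The genuinely nontrivial boundary instances, such as $[W_{-k},W_0]=0$, are handled by writing out the two $q$-shuffle products with Lemmas~\ref{lem:stardata1} and \ref{lem:stardata2} and comparing the resulting sums (after an index reversal they agree termwise); and the instances matching the pattern of Proposition~\ref{prop:rel1} are quoted directly.

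For the inductive step I assume $k,\ell\geq 1$ and that the relations hold for all smaller values of $k+\ell$. For each product $P\star Q$ of two alternating words occurring in a relation, I apply the appropriate identity of Lemma~\ref{lem:prep}, which expresses $P\star Q$ as a sum of two terms, each of the shape $x\cdot(P'\star Q')$ or $y\cdot(P'\star Q')$ with $P',Q'$ alternating words of strictly smaller total length, and which offers a choice of a ``left-peeled'' and a ``right-peeled'' form. Substituting these expressions into the relation and collecting the terms that share a common left (or right) letter factor, the bracket that remains is a linear combination of the commutator expressions appearing in \textbf{(\ref{eq:3p4vv})}--\textbf{(\ref{eq:3p11v})} at strictly smaller parameters, together with instances of Proposition~\ref{prop:rel1}. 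By the induction hypothesis these are all $0$ (or equal the stated right-hand sides), and one checks that the coefficients are arranged so that everything cancels, yielding the relation at level $k+\ell$. The $\sigma,S$-reduction also means I only need to run this computation for the five representative relations.

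The main obstacle is the bookkeeping, not any conceptual point. First, the relations are coupled: reducing one relation at level $n$ via Lemma~\ref{lem:prep} typically produces left-hand sides of \emph{other} relations, occasionally still at level $n$ rather than strictly below it (for example the reduction of a ``$G$--$G$'' relation naturally throws up a ``$W$--$G$'' relation at the same level). So one must order the five representatives (roughly: ``$W$--$W$'', then ``$W$--$G$'', then ``$G$--$G$'') so that the dependencies at a fixed level are acyclic, with Proposition~\ref{prop:rel1} and Lemmas~\ref{lem:stardata1}, \ref{lem:stardata2} supplying the $k=0$/$\ell=0$ data. Second, one must keep precise track of the powers $q^{\pm 2}$ coming from the $\langle\,,\,\rangle$-exponents in Lemma~\ref{lem:prep}; choosing the left-peeled versus the right-peeled form consistently for each factor is exactly what makes the cancellations occur. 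Carrying this out carefully for the five representatives, and then transporting the conclusions to the remaining relations via $\sigma$ and $S$, completes the proof.
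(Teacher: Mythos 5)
Your overall strategy---induction on $k+\ell$ driven by the peeling identities of Lemma \ref{lem:prep}, with $\sigma$ and $S$ cutting down the number of cases and Proposition \ref{prop:rel1} together with Lemmas \ref{lem:stardata1}, \ref{lem:stardata2} handling the boundary---is exactly the approach the paper takes (its stated proof is the one line ``Use Lemma \ref{lem:prep} and induction on $k+\ell$''). There is, however, one substantive inaccuracy in your description of the inductive step: the residual brackets are \emph{not} in general linear combinations of the commutator expressions (\ref{eq:3p4vv})--(\ref{eq:3p11v}) at smaller parameters together with instances of Proposition \ref{prop:rel1}. For example, peeling $x$ off the left of both terms of $\lbrack W_{-k}, W_{-\ell}\rbrack$ via Lemma \ref{lem:prep} gives
\begin{align*}
\lbrack W_{-k}, W_{-\ell}\rbrack \;=\; q\,x\Bigl(\lbrack W_{-k}, G_{\ell}\rbrack_q - \lbrack W_{-\ell}, G_{k}\rbrack_q\Bigr),
\end{align*}
and the quantity in parentheses is the $q$-commutator symmetry (\ref{eq:gg1}) of Proposition \ref{prop:rel3}, which is not expressible through (\ref{eq:3p4vv})--(\ref{eq:3p11v}) and Proposition \ref{prop:rel1} alone; the right-peeled form leads to (\ref{eq:gg2}) instead. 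So the induction as you have set it up does not close on the ten stated relations.

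The fix is to enlarge the system: run a simultaneous induction on the total length of the two words over the relations of Propositions \ref{prop:rel2} \emph{and} \ref{prop:rel3} together. This is consistent, since for instance (\ref{eq:gg1}) at total length $2k+2\ell+1$ reduces via Lemma \ref{lem:prep} to $\lbrack G_k,G_\ell\rbrack=0$ and $\lbrack W_{-k},W_{1-\ell}\rbrack=0$ at total length $2k+2\ell$, while $\lbrack W_{-k},W_{-\ell}\rbrack$ at total length $2k+2\ell+2$ reduces to (\ref{eq:gg1}) at total length $2k+2\ell+1$; the dependencies always drop the total length by one, so the joint induction is well founded. With that enlargement (and your ordering of representatives adjusted accordingly), your argument goes through and matches the paper's intent, as reflected in the paper's later Lemmas \ref{lem:transGW} and \ref{lem:ggww}, whose proofs invoke all three of Propositions \ref{prop:rel1}, \ref{prop:rel2}, \ref{prop:rel3}.
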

\begin{proof} Use Lemma
\ref{lem:prep} and induction on $k+\ell$.
\end{proof}

\begin{proposition}
\label{prop:rel3} For $k,\ell \in \mathbb N$ the following
relations hold in the $q$-shuffle algebra $\mathbb V$:
\begin{align}
&\lbrack W_{-k}, G_{\ell}\rbrack_q = 
\lbrack W_{-\ell}, G_{k}\rbrack_q,
\qquad \quad
\lbrack G_k, W_{\ell+1}\rbrack_q = 
\lbrack G_\ell, W_{k+1}\rbrack_q,
\label{eq:gg1}
\\
&
\lbrack \tilde G_k, W_{-\ell}\rbrack_q = 
\lbrack \tilde G_\ell, W_{-k}\rbrack_q,
\qquad \quad 
\lbrack W_{\ell+1}, \tilde G_{k}\rbrack_q = 
\lbrack W_{k+1}, \tilde G_{\ell}\rbrack_q,
\label{eq:gg2}
\\
&\lbrack G_{k}, \tilde G_{\ell+1}\rbrack -
\lbrack G_{\ell}, \tilde G_{k+1}\rbrack =
q\lbrack W_{-\ell}, W_{k+1}\rbrack_q-
q\lbrack W_{-k}, W_{\ell+1}\rbrack_q,
\label{eq:gg3}
\\
&\lbrack \tilde G_{k},  G_{\ell+1}\rbrack -
\lbrack \tilde G_{\ell},  G_{k+1}\rbrack =
q \lbrack W_{\ell+1}, W_{-k}\rbrack_q-
q\lbrack W_{k+1}, W_{-\ell}\rbrack_q,
\label{eq:gg4}
\\
&\lbrack G_{k+1}, \tilde G_{\ell+1}\rbrack_q -
\lbrack G_{\ell+1}, \tilde G_{k+1}\rbrack_q =
q\lbrack W_{-\ell}, W_{k+2}\rbrack-
q\lbrack W_{-k}, W_{\ell+2}\rbrack,
\label{eq:gg5}
\\
&\lbrack \tilde G_{k+1},  G_{\ell+1}\rbrack_q -
\lbrack \tilde G_{\ell+1},  G_{k+1}\rbrack_q =
q \lbrack W_{\ell+1}, W_{-k-1}\rbrack-
q\lbrack W_{k+1}, W_{-\ell-1}\rbrack.
\label{eq:gg6}
\end{align}
\end{proposition}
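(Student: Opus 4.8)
The plan is to derive all six identities in Proposition \ref{prop:rel3} from the combinatorial reduction formulas in Lemma \ref{lem:prep}, using induction on $k+\ell$, exactly as was done for Proposition \ref{prop:rel2}. First I would set up the base cases: for small values of $k+\ell$ one checks the relations directly using Lemma \ref{lem:stardata1} and Lemma \ref{lem:stardata2} (together with $G_0=\tilde G_0=1$, which makes several of the claimed identities degenerate and easy at the bottom of the induction). The point of Lemma \ref{lem:prep} is that each of the twelve families of products $W \star W$, $W \star G$, $G \star W$, $G \star G$ (and their tilded analogues) is expressed in two ways: once by "peeling a letter off the left" and once by "peeling a letter off the right". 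Subtracting the two expressions for a given product, or pairing up the expression for one product with that for a related product, produces a recursion in which the $q$-shuffle products of length-$(2k+2\ell)$ words on the right-hand side are replaced by $q$-shuffle products of strictly shorter total length, to which the inductive hypothesis applies.

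Concretely, for the two identities in (\ref{eq:gg1}) I would take the reduction for $W_{-k}\star G_{\ell}$ and compare the "left-peel" and "right-peel" forms given in Lemma \ref{lem:prep} (the rows for $W_{-k}\star G_{\ell+1}$, after shifting the index); the mismatch terms are $x(\cdots)$ and $y(\cdots)$ prefactors whose arguments are shorter products, and applying the induction hypothesis to those shorter products collapses the difference into the symmetric form $\lbrack W_{-\ell},G_k\rbrack_q$. The identity (\ref{eq:gg2}) follows from (\ref{eq:gg1}) by applying the antiautomorphism $S$ (which fixes $x,y$, hence sends $\star$-products to reversed $\star$-products and, by Lemma \ref{lem:sigSact}, swaps $G_k\leftrightarrow\tilde G_k$ while fixing each $W$), so I would prove (\ref{eq:gg1}) in full and then just invoke $S$ — and similarly use $\sigma$ to pass between the "$G$" and "$\tilde G$" versions of (\ref{eq:gg3})–(\ref{eq:gg4}) and of (\ref{eq:gg5})–(\ref{eq:gg6}). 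For (\ref{eq:gg3}) and (\ref{eq:gg5}) the bookkeeping is heavier: here one must combine the reductions for $G_{k}\star\tilde G_{\ell+1}$ and $G_{\ell}\star\tilde G_{k+1}$ (resp. the $q$-commutator versions), where each product again has a two-sided expansion from Lemma \ref{lem:prep}; forming the indicated antisymmetric combination, the "diagonal" terms cancel and the surviving terms, after the inductive substitution, assemble into the $q$-commutators of $W$'s on the right-hand side. The scalars $q^{\pm 2}$ attached to the various summands in Lemma \ref{lem:prep} are exactly what produce the $\lbrack\,,\rbrack_q$ versus $\lbrack\,,\rbrack$ distinction and the overall factor $q$ in (\ref{eq:gg3})–(\ref{eq:gg6}).

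The main obstacle I expect is not conceptual but organizational: there are six identities, each a two-variable recursion, and Lemma \ref{lem:prep} offers several inequivalent ways to peel each product, so choosing the pairing of left-peel and right-peel expressions (and of which two products to combine) so that the length-$(2k+2\ell)$ terms cancel cleanly and the remainder matches the target requires care with the $q$-powers. I would organize the proof by first recording, once and for all, how $\sigma$ and $S$ permute the six identities (reducing the work to, say, (\ref{eq:gg1}), (\ref{eq:gg3}), (\ref{eq:gg5})), then handling those three representative cases in detail. Given the framework already built in Lemmas \ref{lem:prep}, \ref{lem:stardata1}, \ref{lem:stardata2} and the precedent of Proposition \ref{prop:rel2}, each representative case is a finite manipulation, and I would expect to be able to state, as in the proof of Proposition \ref{prop:rel2}, that the relations "follow from Lemma \ref{lem:prep} and induction on $k+\ell$," with the symmetries $\sigma,S$ cutting the casework in half.
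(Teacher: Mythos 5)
Your proposal is correct and follows essentially the same route as the paper: the paper's proof evaluates both sides of each identity using the two-sided peeling formulas of Lemma \ref{lem:prep} and then simplifies via the already-established commutation relations. The only cosmetic difference is that the paper sets up no fresh induction on $k+\ell$ for this proposition --- the shorter inner products produced by peeling are disposed of directly by Proposition \ref{prop:rel2} (the $[W,W]$, $[G,G]$, and paired commutator relations), not by an induction hypothesis consisting of smaller instances of (\ref{eq:gg1})--(\ref{eq:gg6}) themselves, so your ``induction'' is really just that direct simplification in disguise.
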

\begin{proof} Each equation is verified by evaluating
both sides using Lemma
\ref{lem:prep} and simplifying the result using
Proposition
\ref{prop:rel2}.
\end{proof}

\noindent We emphasize 
 a few points about Definition 
\ref{def:WWGG}.

\begin{lemma} Referring to Definition 
\ref{def:WWGG}, the following
{\rm (i)--(v)} hold.
\begin{enumerate}
\item[\rm (i)] For each of the lines
{\rm (\ref{eq:Wm})--(\ref{eq:Gt})}
the listed elements mutually commute.
\item[\rm (ii)]
An alternating word is listed in 
{\rm (\ref{eq:Wm})} if and only if it commutes with $x$.
\item[\rm (iii)]
An alternating word is listed in 
{\rm (\ref{eq:Wp})} if and only if it commutes with $y$.
\item[\rm (iv)]
An alternating word is listed in 
{\rm (\ref{eq:G})} if and only if it commutes with 
$q y\star x - q^{-1} x \star y$.
\item[\rm (v)]
An alternating word is listed in 
{\rm (\ref{eq:Gt})} if and only if it commutes with 
$q x \star y - q^{-1} y \star x$.
\end{enumerate}
\end{lemma}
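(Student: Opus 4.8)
The plan is to deduce parts (i)--(v) almost entirely from the relations already assembled in Propositions~\ref{prop:rel1} and \ref{prop:rel2}, reading each of those relations through the lens of the $q$-shuffle product (all commutators below are $\star$-commutators, per the Note following Proposition~\ref{prop:rel1}). First I would dispose of part (i): the lines (\ref{eq:Wm})--(\ref{eq:Gt}) consist respectively of the families $\{W_{-k}\}$, $\{W_{k+1}\}$, $\{G_{k+1}\}$, $\{\tilde G_{k+1}\}$, and the first relation of (\ref{eq:3p4vv}) says any two $W_{-k}$'s commute, the second says any two $W_{k+1}$'s commute, and (\ref{eq:3p10vv}) says any two $G$'s commute and any two $\tilde G$'s commute. (One must also note $G_0=\tilde G_0=1$ is central, so including the index $0$ does no harm.) That settles (i).

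For the ``only if'' direction of (ii), taking $\ell=0$ in the first relation of (\ref{eq:3p4vv}) gives $[W_{-k},W_0]=0$, and $W_0=x$, so every word in line (\ref{eq:Wm}) commutes with $x$; symmetrically, taking $k=0$ (equivalently using the $\sigma$-image via Lemma~\ref{lem:sigSact}) in the second relation of (\ref{eq:3p4vv}) gives the ``only if'' direction of (iii). For (iv) and (v) I would argue similarly: by (\ref{eq:xypoly}) we have $qy\star x - q^{-1}x\star y = q^{-1}(q^2-q^{-2})\,yx$, a nonzero scalar multiple of $G_1$, so ``commutes with $qy\star x-q^{-1}x\star y$'' is the same as ``commutes with $G_1$''; then the first relation of (\ref{eq:3p10vv}) with $\ell=0$ shows every $G_{k+1}$ commutes with $G_1$, giving the ``only if'' direction of (iv). For (v), $qx\star y - q^{-1}y\star x$ is a nonzero multiple of $\tilde G_1$ by (\ref{eq:xypoly}), and the second relation of (\ref{eq:3p10vv}) with $\ell=0$ shows every $\tilde G_{k+1}$ commutes with $\tilde G_1$, yielding the ``only if'' direction.

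The substantive direction is the ``if'' in (ii)--(v): an alternating word that commutes with the designated element must lie on the specified line, i.e.\ must be of the stated form and \emph{not} of any of the other three forms. This is where the real content is, and the main obstacle, so here is how I would handle it. An arbitrary alternating word is, by Definition~\ref{def:WWGG}, exactly one of $W_{-k}$, $W_{k+1}$, $G_{k+1}$, $\tilde G_{k+1}$ (for a unique $k\ge 0$); so it suffices to check, for each of the three families \emph{not} on a given line, that its members fail to commute with the relevant generator. For (ii), commuting with $x=W_0$: by (\ref{eq:3p1vv}) with the roles set appropriately, $[W_0,W_{k+1}] = (1-q^{-2})(\tilde G_{k+1}-G_{k+1})$, which is nonzero since $G_{k+1}$ and $\tilde G_{k+1}$ are distinct standard basis words of $\mathbb V$ and $q$ is not a root of unity; by (\ref{eq:3p2vv}), $[W_0,G_{k+1}]_q=(q-q^{-1})W_{-k-1}$, and expanding $[W_0,G_{k+1}]=[W_0,G_{k+1}]_q + (q-q^{-1})\cdot\text{(a } \star\text{-product term)}$ --- more carefully, one uses that $W_0\star G_{k+1}$ and $G_{k+1}\star W_0$ each have a leading standard-basis term not cancelling in $W_0\star G_{k+1}-G_{k+1}\star W_0$, so $[W_0,G_{k+1}]\ne 0$; similarly $[W_0,\tilde G_{k+1}]\ne 0$ from (\ref{eq:3p2vv}). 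Thus among alternating words only those in (\ref{eq:Wm}) commute with $x$. Part (iii) follows by applying the automorphism $\sigma$ and Lemma~\ref{lem:sigSact}, which swaps (\ref{eq:Wm})$\leftrightarrow$(\ref{eq:Wp}) and (\ref{eq:G})$\leftrightarrow$(\ref{eq:Gt}) and carries $x\leftrightarrow y$. For (iv), one checks directly from (\ref{eq:xG}), (\ref{eq:xGt}), (\ref{eq:xWm}), (\ref{eq:xWp}) and their $\star x$ analogues in Lemma~\ref{lem:stardata2} --- or more cheaply from the relations (\ref{eq:3p1vv})--(\ref{eq:3p3vv}) with one index set to zero --- that $W_{-k}$, $W_{k+1}$, and $\tilde G_{k+1}$ each fail to commute with $G_1$ (the nonvanishing again using that $q$ is not a root of unity and comparing leading standard words), so only line (\ref{eq:G}) survives; part (v) is the $\sigma$-image of (iv). The one point demanding care throughout is the nonvanishing assertions: each reduces to exhibiting a single standard-basis word of $\mathbb V$ whose coefficient in the relevant $\star$-commutator is a nonzero Laurent polynomial in $q$, which is immediate from the explicit formulas in Lemmas~\ref{lem:stardata1} and \ref{lem:stardata2} since those coefficients are products of factors like $1+q^2$, $1+q^{-2}$, $q-q^{-1}$ that never vanish for $q$ not a root of unity.
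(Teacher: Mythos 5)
Your overall architecture matches the paper's: part (i) from (\ref{eq:3p4vv}), (\ref{eq:3p10vv}); the ``only if'' directions from the same commutation relations; the ``if'' directions by checking that the three remaining families fail to commute with the designated element; and (iii), (v) obtained from (ii), (iv) by applying $\sigma$ via Lemma \ref{lem:sigSact}. Your treatment of (ii) is essentially the paper's (which cites (\ref{eq:xG}), (\ref{eq:xGt}), (\ref{eq:3p1vv}), (\ref{eq:3p2vv})), and your closing remark about exhibiting one standard-basis word with nonvanishing coefficient is the right mechanism --- though your phrase ``leading standard-basis term not cancelling'' is slightly off: in $[W_0,G_{k+1}]$ the term $W_{-k-1}$ appears with coefficient $1$ on both sides and \emph{does} cancel; what survives is $(q^{-2}-q^{2})\sum_{i=0}^{k}W_{i+1}x^2G_{k-i}$, read off from comparing the coefficients $1+q^{-2}$ in (\ref{eq:xG}) with $1+q^{2}$ in the corresponding line of Lemma \ref{lem:stardata2}.

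The genuine gap is in part (iv). After reducing to ``commutes with $G_1$'' via (\ref{eq:xypoly}), you propose to verify that $W_{-k}$, $W_{k+1}$, $\tilde G_{k+1}$ fail to commute with $G_1$ either ``directly from'' Lemmas \ref{lem:stardata1}, \ref{lem:stardata2}, or ``more cheaply'' from (\ref{eq:3p1vv})--(\ref{eq:3p3vv}) with an index set to zero. Neither route reaches general $k$: Lemmas \ref{lem:stardata1}, \ref{lem:stardata2} only compute $\star$-products with a single \emph{letter}, not with the length-two word $G_1$; and the relations (\ref{eq:3p1vv})--(\ref{eq:3p3vv}) only pair $G_{k+1}$ or $\tilde G_{k+1}$ against $W_0$ or $W_1$, so setting an index to zero controls only $[W_0,G_1]$, $[W_1,G_1]$, $[\tilde G_1,G_1]$, not $[W_{-k},G_1]$, $[W_{k+1},G_1]$, $[\tilde G_{k+1},G_1]$ for $k\geq 1$. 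The missing ingredient is an index-transfer step: the paper invokes (\ref{eq:3p6vv}), (\ref{eq:3p8vv}) and (\ref{eq:gg4}) at $\ell=0$, which convert these commutators into $[W_0,G_{k+1}]$, $[W_1,G_{k+1}]$ and a combination of $[W_1,W_{-k}]_q$, $[W_{k+1},W_0]_q$ respectively, all of which are then shown nonzero by the letter-times-word formulas you already use in (ii). (Equivalently one can expand $G_1\star W_{-k}$ and $W_{-k}\star G_1$ via Lemma \ref{lem:prep} to get $[G_1,W_{-k}]=(1-q^{-2})\,y\,(x\star W_{-k})\neq 0$, and similarly for the other two families.) Without some such step your argument for (iv), and hence for (v), does not close.
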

\begin{proof} (i) By
(\ref{eq:3p4vv}),
(\ref{eq:3p10vv}).
\\
\noindent (ii) Use
(\ref{eq:xG}),
(\ref{eq:xGt}) and
(\ref{eq:3p1vv}),
(\ref{eq:3p2vv}).
\\
\noindent (iii) Use (ii) above and the $\sigma$ action from
Lemma \ref{lem:sigSact}.
\\
\noindent (iv)
Use
(\ref{eq:xypoly})  together with
(\ref{eq:3p6vv}),
(\ref{eq:3p8vv}),
(\ref{eq:gg4}) at $\ell=0$.
\\
\noindent (v) 
Use (iv) above and the $\sigma$ action from
Lemma \ref{lem:sigSact}.
\end{proof}

\section{Commutator relations for alternating words, part I}

\noindent Our next general goal is to
obtain a commutator relation for every pair of alternating 
words.
As we pursue this goal, it is convenient to split the argument
into two cases.
In the present (resp. next) section we treat the case in which the
pair of alternating words have length of opposite (resp. same) parity.
Throughout this section fix $n \in \mathbb N$. 
\medskip

\newpage
\noindent 
In Table 1 below, we see four cases.
For each case we have
a set $L$ and a set $R$:
\bigskip

\centerline{
\begin{tabular}[t]{c|cc}
   {\rm case} &  {\rm set $L$} & {\rm set $R$} 
   \\
   \hline
$1$ &$\lbrace  G_i \star W_{-j}|i,j\in \mathbb N, \;\; i+j=n\rbrace$
& $\lbrace W_{-j} \star G_i | i,j\in \mathbb N, \;\; i+j=n\rbrace$
\\
$2$ & $\lbrace G_i \star W_{j+1} |i,j\in \mathbb N, \;\; i+j=n\rbrace$
 &
$\lbrace  W_{j+1} \star G_i | i,j\in \mathbb N, \;\; i+j=n\rbrace$
\\
$3$ & $\lbrace \tilde G_i \star W_{-j}|i,j\in \mathbb N, \;\; i+j=n\rbrace$
   &
  $\lbrace W_{-j} \star \tilde G_i | i,j\in \mathbb N, \;\;
  i+j=n\rbrace$
\\
$4$& $\lbrace \tilde G_i \star W_{j+1} |i,j\in \mathbb N, \;\;  i+j=n\rbrace$
&
$\lbrace W_{j+1}\star \tilde G_i | i,j\in \mathbb N, \;\; i+j=n\rbrace$
\end{tabular}
}
\medskip

{
\centerline{Table 1}}
\medskip

\noindent In each case,  we will show that
the sets $L$  and $R$
have the same span.
To this end, order the set $L$
 as follows:
\bigskip

\centerline{
\begin{tabular}[t]{c|c}
   {\rm case} &  {\rm ordering of $L$} 
   \\
   \hline
$1$ &
$G_0 \star W_{-n}, \;\; G_n \star W_0, \;\;
G_1 \star W_{1-n}, \;\; G_{n-1} \star W_{-1}, \;\;
G_2 \star W_{2-n}, \;\; G_{n-2} \star W_{-2}, \;\; \ldots $
\\
$2$ &
$G_0 \star W_{n+1}, \;\; G_n \star W_1, \;\;
G_1 \star W_{n}, \;\; G_{n-1} \star W_{2}, \;\;
G_2 \star W_{n-1}, \;\; G_{n-2} \star W_{3}, \;\; \ldots
$
\\
$3$ &
$\tilde G_0 \star W_{-n}, \;\; \tilde G_n \star W_0, \;\;
\tilde G_1 \star W_{1-n}, \;\; \tilde G_{n-1} \star W_{-1}, \;\;
\tilde G_2 \star W_{2-n}, \;\; \tilde G_{n-2} \star W_{-2}, \;\; \ldots
$
\\
$4$ &
$\tilde G_0 \star W_{n+1}, \;\; \tilde G_n \star W_1, \;\;
\tilde G_1 \star W_{n}, \;\; \tilde G_{n-1} \star W_{2}, \;\;
\tilde G_2 \star W_{n-1}, \;\; \tilde G_{n-2} \star W_{3}, \;\; \ldots
$
\end{tabular}
}
\bigskip

\noindent Order the set $R$ as follows:
\bigskip

\centerline{
\begin{tabular}[t]{c|c}
   {\rm case} &  {\rm ordering of $R$} 
   \\
   \hline
$1$ &
$ W_{-n}\star G_0, \;\;  W_0 \star G_n, \;\;
W_{1-n} \star G_1, \;\;  W_{-1} \star G_{n-1}, \;\;
W_{2-n}\star G_2, \;\;   W_{-2} \star G_{n-2}, \;\; \ldots
$
\\
$2$ &
$ W_{n+1}\star G_0, \;\;  W_1 \star G_n, \;\;
W_{n} \star G_1, \;\;  W_{2} \star G_{n-1}, \;\;
W_{n-1}\star G_2, \;\;   W_{3} \star G_{n-2}, \;\; \ldots
$
\\
$3$ &
$ W_{-n}\star \tilde G_0, \;\;  W_0 \star \tilde G_n, \;\;
W_{1-n} \star \tilde G_1, \;\;  W_{-1} \star \tilde G_{n-1}, \;\;
W_{2-n}\star \tilde G_2, \;\;   W_{-2} \star \tilde G_{n-2}, \;\; \ldots
$
\\
$4$ &
$W_{n+1}\star \tilde G_0, \;\;  W_1 \star \tilde G_n, \;\;
W_{n} \star \tilde G_1, \;\;  W_{2} \star \tilde G_{n-1}, \;\;
W_{n-1}\star \tilde G_2, \;\;   W_{3} \star \tilde G_{n-2}, \;\; \ldots
$
\end{tabular}
}

\bigskip

\noindent 
Define sequences $\lbrace u_i\rbrace_{i=0}^n$,
$\lbrace v_i\rbrace_{i=0}^n$
as follows.
In cases 1 and 4 let 
$\lbrace u_i\rbrace_{i=0}^n$
(resp. $\lbrace v_i\rbrace_{i=0}^n$)
denote the given ordering of $L$ (resp. $R$).
In cases 2 and 3 let 
$\lbrace u_i\rbrace_{i=0}^n$
(resp. $\lbrace v_i\rbrace_{i=0}^n$)
denote the given ordering of $R$ (resp. $L$).


\begin{lemma}
\label{lem:transGW}
In each case 1--4 above,
the following holds for  $0 \leq j \leq n$.
\begin{enumerate} 
\item[\rm (i)] For $j$ even,
\begin{align*} 
&u_j = v_j + (1-q^{2})\sum_{i=0}^{j-1} (-1)^i v_i,
\qquad \qquad 
v_j = u_j + (1-q^{-2})\sum_{i=0}^{j-1} (-1)^i u_i.
\end{align*}
\item[\rm (ii)] For $j$ odd,
\begin{align*} 
&u_j = q^2 v_j + (1-q^{2})\sum_{i=0}^{j-1} (-1)^i v_i,
\qquad \qquad 
v_j = q^{-2} u_j + (1-q^{-2})\sum_{i=0}^{j-1} (-1)^i u_i.
\end{align*}
\end{enumerate}
\end{lemma}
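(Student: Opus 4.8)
The plan is to prove the assertion for case~1 by induction on $j$, and then to deduce cases~2--4 from case~1 by applying the automorphism $\sigma$ and the antiautomorphism $S$ of the $q$-shuffle algebra $\mathbb V$ from Lemma~\ref{lem:Vsym}.

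First I would reduce both displayed equations, in all four cases at once, to a single family of identities. Write $\epsilon_j=1$ for $j$ even and $\epsilon_j=q^2$ for $j$ odd, so that the two equations become $u_j-\epsilon_j v_j=(1-q^2)\sum_{i=0}^{j-1}(-1)^i v_i$ and $v_j-\epsilon_j^{-1}u_j=(1-q^{-2})\sum_{i=0}^{j-1}(-1)^i u_i$. At $j=0$ both say $u_0=v_0$, which holds because one factor of $u_0$, and one factor of $v_0$, equals $G_0=1$ or $\tilde G_0=1$. For $1\le j\le n$ I would subtract the $(j-1)$-instance from the $j$-instance of each equation; since the relevant alternating sums differ only by the single term $(1-q^2)(-1)^{j-1}v_{j-1}$, respectively $(1-q^{-2})(-1)^{j-1}u_{j-1}$, a short calculation shows that, modulo the lower-index instances, \emph{both} equations at level $j$ are equivalent to the one identity
\[
u_j-v_j=u_{j-1}-v_{j-1}\quad(j\ \text{even}),\qquad\qquad u_j-q^2 v_j=u_{j-1}-q^2 v_{j-1}\quad(j\ \text{odd}).
\]
Thus the lemma reduces, by induction on $j$, to proving this identity for $1\le j\le n$ in each of the four cases.

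Next I would prove the identity in case~1, where $u_{2m}=G_m\star W_{-(n-m)}$, $u_{2m+1}=G_{n-m}\star W_{-m}$, and each $v_i$ is $u_i$ with the order of the $\star$-product reversed. For odd $j=2m+1$ one finds $u_j-q^2 v_j=-q\,\lbrack W_{-m},G_{n-m}\rbrack_q$ and $u_{j-1}-q^2 v_{j-1}=-q\,\lbrack W_{-(n-m)},G_m\rbrack_q$, so the identity is exactly the first relation of~(\ref{eq:gg1}) in Proposition~\ref{prop:rel3}; the initial case $j=1$ (where $m=0$, using $G_0=1$) is also (\ref{eq:3p2vv}) of Proposition~\ref{prop:rel1}. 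For even $j=2m$ with $m\ge 1$ one finds $u_j-v_j=\lbrack G_m,W_{-(n-m)}\rbrack$ and $u_{j-1}-v_{j-1}=\lbrack G_{n-m+1},W_{-(m-1)}\rbrack$, so the identity is (\ref{eq:3p6vv}) of Proposition~\ref{prop:rel2} with $(k,\ell)=(m-1,n-m)$. This settles case~1.

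Finally I would handle cases~2--4 by symmetry. The maps $\sigma$ and $S$ are respectively an automorphism and an antiautomorphism of the $q$-shuffle algebra, are $\mathbb F$-linear, and permute the alternating words as in Lemma~\ref{lem:sigSact}; consequently $\sigma$ carries case~1, with its prescribed orderings, onto case~4, the map $S$ carries case~1 onto case~3 (here the interchange of the roles of $L$ and $R$ in the definitions of $\{u_i\},\{v_i\}$ for cases~2 and~3 is precisely what makes $S$ send $u_i\mapsto u_i$ and $v_i\mapsto v_i$), and $\sigma S$ carries case~1 onto case~2. Since $\sigma$ and $S$ fix $q$, applying them to the identity proved above for case~1 yields it for cases~2, 3, 4. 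The step I expect to be most delicate is this last bit of bookkeeping together with the case~1 verification: one must correctly locate the pair $(i,n-i)$ at position $j$ of the zigzag ordering and keep the powers of $q$ straight, since the exact instances of Propositions~\ref{prop:rel1}--\ref{prop:rel3} needed depend sensitively on that.
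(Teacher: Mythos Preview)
Your proposal is correct and follows essentially the same route as the paper: prove case~1 by induction on $j$ using Propositions~\ref{prop:rel1}, \ref{prop:rel2}, \ref{prop:rel3}, then obtain cases~2, 3, 4 by applying $\sigma S$, $S$, $\sigma$ respectively. The paper's proof is only a one-line sketch, and your reduction of the two displayed equations to the single difference identity $u_j-\epsilon_j v_j = u_{j-1}-\epsilon_j v_{j-1}$ (with $\epsilon_j\in\{1,q^2\}$), together with the explicit matching of the even/odd steps to (\ref{eq:3p6vv}) and (\ref{eq:gg1}), is precisely the kind of unpacking the paper leaves to the reader.
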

\begin{proof} To obtain the result for case 1, use Propositions
\ref{prop:rel1},
\ref{prop:rel2},
\ref{prop:rel3} 
and induction on $j$.
To obtain the result for case 2 (resp. case 3) (resp. case 4),
apply $\sigma S$ (resp. $S$) (resp. $\sigma$) to everything
from case 1. 
\end{proof}

\noindent In Appendix A we present Lemma
\ref{lem:transGW} in an alternative form.

\begin{proposition}
\label{prop:table1}
For each case in Table 1,
 the sets $L$
and $R$ have the same span.
\end{proposition}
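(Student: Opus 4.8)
The plan is to deduce Proposition \ref{prop:table1} directly from Lemma \ref{lem:transGW}. The point is that Lemma \ref{lem:transGW} expresses, for each case, the $j$-th element $u_j$ of one ordered list as an explicit linear combination of the first $j+1$ elements $v_0,\dots,v_j$ of the other ordered list, with an invertible scalar (namely $1$ or $q^2$) multiplying $v_j$. Symmetrically it expresses $v_j$ in terms of $u_0,\dots,u_j$. So the change-of-basis matrix between $\{u_i\}_{i=0}^n$ and $\{v_i\}_{i=0}^n$ is triangular with nonzero diagonal entries, hence invertible over $\mathbb F$ (here we use that $q$ is not a root of unity, so $q^2\neq 0$). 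Therefore $\mathrm{Span}\{u_0,\dots,u_n\}=\mathrm{Span}\{v_0,\dots,v_n\}$.

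First I would recall, for a fixed case among 1--4, how $\{u_i\}$ and $\{v_i\}$ are related to $L$ and $R$: by the definitions preceding Lemma \ref{lem:transGW}, in cases 1 and 4 the list $\{u_i\}$ is the given ordering of $L$ and $\{v_i\}$ is the given ordering of $R$, while in cases 2 and 3 the roles are swapped. In every case $\{u_i\}_{i=0}^n$ is an enumeration of the set $L$ (or $R$) and $\{v_i\}_{i=0}^n$ is an enumeration of the other set, so it suffices to show $\mathrm{Span}\{u_i\}_{i=0}^n = \mathrm{Span}\{v_i\}_{i=0}^n$ in the $q$-shuffle algebra $\mathbb V$. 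I would then invoke Lemma \ref{lem:transGW}(i),(ii): by a trivial induction on $j$ (or just by reading the displayed formulas), each $v_j$ lies in $\mathrm{Span}\{u_0,\dots,u_j\}\subseteq \mathrm{Span}\{u_i\}_{i=0}^n$, using $v_j=q^{-2\epsilon_j}u_j+(1-q^{-2})\sum_{i=0}^{j-1}(-1)^i u_i$ where $\epsilon_j\in\{0,1\}$. Hence $\mathrm{Span}\{v_i\}_{i=0}^n\subseteq\mathrm{Span}\{u_i\}_{i=0}^n$, and the symmetric formula for $u_j$ gives the reverse inclusion. Therefore the two spans coincide, which is exactly the claim that $L$ and $R$ have the same span.

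There is essentially no obstacle here: all the content lives in Lemma \ref{lem:transGW}, and Proposition \ref{prop:table1} is a one-line consequence of the triangularity of the transition relations. The only thing worth a remark is that the argument applies uniformly to all four cases because Lemma \ref{lem:transGW} is stated uniformly; no case-by-case checking is needed at this stage. I would write the proof as: ``Immediate from Lemma \ref{lem:transGW}, since that lemma shows that the ordered list $\{u_i\}_{i=0}^n$ and the ordered list $\{v_i\}_{i=0}^n$ are related by an invertible (unitriangular up to nonzero diagonal scalars) change of basis, and $\{u_i\}$, $\{v_i\}$ enumerate $L$ and $R$ in some order.''
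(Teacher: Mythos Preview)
Your proposal is correct and matches the paper's approach exactly: the paper's proof is the single line ``By Lemma \ref{lem:transGW},'' and you have simply spelled out the triangularity argument that makes this citation work. (One tiny remark: you only need $q\neq 0$ to get $q^2\neq 0$, not the stronger hypothesis that $q$ is not a root of unity.)
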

\begin{proof} By Lemma
\ref{lem:transGW}.
\end{proof}

\noindent  We mention some relations for later use.

\begin{proposition} 
\label{prop:attract}
For $n \in \mathbb N$,
\begin{align}
&
\sum_{k=0}^n  G_{n-k} \star W_{-k} q^{2k-n} = 
\sum_{k=0}^n W_{-k} \star  G_{n-k} q^{n-2k},
\label{eq:WGsum1}
\\
&
\sum_{k=0}^n  G_{n-k} \star W_{k+1} q^{n-2k} = 
\sum_{k=0}^n W_{k+1} \star  G_{n-k} q^{2k-n},
\label{eq:WGsum4}
\\
&
\sum_{k=0}^n \tilde G_{n-k} \star W_{-k} q^{n-2k} = 
\sum_{k=0}^n W_{-k} \star \tilde G_{n-k} q^{2k-n}.
\label{eq:WGsum3}
\\
&
\sum_{k=0}^n  \tilde G_{n-k} \star W_{k+1} q^{2k-n} = 
\sum_{k=0}^n W_{k+1} \star  \tilde G_{n-k} q^{n-2k},
\label{eq:WGsum2}
\end{align}
\end{proposition}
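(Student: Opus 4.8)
The plan is to derive each of the four identities (\ref{eq:WGsum1})--(\ref{eq:WGsum2}) from the linear-algebraic transition relations in Lemma \ref{lem:transGW}, which already encode (via Propositions \ref{prop:rel1}--\ref{prop:rel3}) the full relationship between the sets $L$ and $R$ in each case of Table 1. First I would fix $n\in\mathbb N$ and work in case 1, with the sequences $\lbrace u_i\rbrace_{i=0}^n$, $\lbrace v_i\rbrace_{i=0}^n$ being the given orderings of $L$ and $R$ respectively. The key observation is that the orderings were chosen so that the elements $G_{n-k}\star W_{-k}$ and $W_{-k}\star G_{n-k}$ appearing in (\ref{eq:WGsum1}) are, up to reindexing, exactly the $u_i$ and $v_i$; so both sides of (\ref{eq:WGsum1}) are particular linear combinations $\sum_i \alpha_i u_i$ and $\sum_i \beta_i v_i$ with explicit powers of $q$ as coefficients. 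The claim to be proved is then that these two combinations are equal, and Lemma \ref{lem:transGW} gives a triangular system expressing each $u_j$ in terms of $v_0,\dots,v_j$ (and vice versa).

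The heart of the argument is therefore a generating-function or telescoping computation: substitute the expression for $u_j$ from Lemma \ref{lem:transGW}(i),(ii) into $\sum_j \alpha_j u_j$ and show the result collapses to $\sum_j \beta_j v_j$. Concretely, writing $c_j = q^{2k-n}$ (with $k$ the appropriate index, so $c_j$ alternates in a controlled way as $j$ runs through the ordering) one must verify an identity of the form
\begin{align*}
\sum_{j=0}^n c_j\Bigl(\varepsilon_j v_j + (1-q^{\pm 2})\sum_{i=0}^{j-1}(-1)^i v_i\Bigr) = \sum_{j=0}^n c'_j v_j,
\end{align*}
where $\varepsilon_j$ is $1$ or $q^2$ according to the parity of $j$, and $c'_j$ is the coefficient coming from the right-hand side of (\ref{eq:WGsum1}). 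Since the $v_i$ are linearly independent in the relevant homogeneous component (they form part of a PBW-type basis, but in any case one may just match coefficients formally), this reduces to a family of scalar identities indexed by $i$: collect the coefficient of $v_i$ on the left, which is $c_i \varepsilon_i + (-1)^i(1-q^{\pm2})\sum_{j>i} c_j$, and check it equals $c'_i$. This is a finite geometric-sum manipulation; the alternating signs $(-1)^i$ together with the $q^{2k-n}$ weights are precisely what makes the tail sums telescope.

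For cases 2, 3, 4 I would not redo the computation: instead, exactly as in the proof of Lemma \ref{lem:transGW}, apply the (anti)automorphisms $\sigma$, $S$, $\sigma S$ of the $q$-shuffle algebra $\mathbb V$ to the case-1 identity. By Lemma \ref{lem:sigSact}, $\sigma$ interchanges $W_{-k}\leftrightarrow W_{k+1}$ and $G_k\leftrightarrow\tilde G_k$, while $S$ fixes the $W$'s, sends $G_k\leftrightarrow\tilde G_k$, and reverses products; tracking how each transformation acts on $\sum_k G_{n-k}\star W_{-k}\,q^{2k-n}$ and its mirror yields precisely (\ref{eq:WGsum4}), (\ref{eq:WGsum3}), (\ref{eq:WGsum2}) after relabelling the summation index (e.g. $k\mapsto n-k$ where needed, which flips $q^{2k-n}\leftrightarrow q^{n-2k}$). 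The main obstacle I anticipate is purely bookkeeping: getting the exponents $q^{2k-n}$ versus $q^{n-2k}$ to land on the correct side in each of the four cases, since the roles of $L$ and $R$ (and hence of $u$ and $v$) are swapped between cases $\lbrace 1,4\rbrace$ and $\lbrace 2,3\rbrace$ in the setup preceding Lemma \ref{lem:transGW}. Once the case-1 scalar identity is checked carefully, the symmetry argument makes the remaining three essentially automatic.
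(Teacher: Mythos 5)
Your proposal is correct and takes essentially the same route as the paper: the paper's proof of (\ref{eq:WGsum1}) likewise consists of substituting the case-1 expressions of Lemma \ref{lem:transGW} for each summand on the left and simplifying (i.e., collecting the coefficient of each $v_i$, exactly the telescoping you describe), and then obtains (\ref{eq:WGsum4}), (\ref{eq:WGsum3}), (\ref{eq:WGsum2}) by applying $\sigma S$, $S$, $\sigma$ respectively to (\ref{eq:WGsum1}). Your aside about linear independence of the $v_i$ is unnecessary (matching the formal linear combinations suffices), but it does no harm.
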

\begin{proof} 
To verify (\ref{eq:WGsum1}), evaluate each summand on
the left using case 1 of Lemma
\ref{lem:transGW}, and simplify the result.
To obtain 
(\ref{eq:WGsum4}) 
(resp.  (\ref{eq:WGsum3}))
(resp.  (\ref{eq:WGsum2}))
apply $\sigma S$ (resp. $S$) (resp. $\sigma$) to everything
in 
(\ref{eq:WGsum1}).
\end{proof}

\section{Commutator relations for alternating words, part II}

\noindent 
In this section we obtain a commutator relation for every 
pair of alternating words that have length of the same parity.
Throughout this section fix an integer $n\geq 1$.
\medskip

\noindent 
Consider the following sets:
\begin{align*}
N &=
\lbrace \tilde G_i \star  G_j |i,j\in \mathbb N, \;\; i+j=n\rbrace,
\\
S &=\lbrace  G_i \star \tilde G_j | i,j\in \mathbb N, \;\; i+j=n\rbrace,
\\
E&= \lbrace W_{i+1} \star W_{-j} |i,j\in \mathbb N, \;\; i+j=n-1\rbrace,
\\
W&= \lbrace W_{-i} \star  W_{j+1} |i,j\in \mathbb N, \;\; i+j=n-1\rbrace.
\end{align*}
\noindent We are going to show that the sets
\begin{align*}
N \cup E, \qquad 
N \cup W, \qquad 
S \cup E, \qquad 
S \cup W
\end{align*}
all have the same span. 
To this end, for each of $N,S,E,W$ we order the elements in two ways:
\newpage
\bigskip

\centerline{
\begin{tabular}[t]{c|c}
    &  {\rm ordering of $N$} 
   \\
   \hline
{\rm I}  &
$\tilde G_0 \star G_{n}, \;\; \tilde G_n \star G_0, \;\;
\tilde G_1 \star G_{n-1}, \;\; \tilde G_{n-1} \star G_{1}, \;\;
\tilde G_2 \star G_{n-2}, \;\; \tilde G_{n-2} \star G_{2}, \;\; \ldots $
\\
{\rm II} &
$\tilde G_n \star G_{0}, \;\; \tilde G_0 \star G_n, \;\;
\tilde G_{n-1} \star G_{1}, \;\; \tilde G_{1} \star G_{n-1}, \;\;
\tilde G_{n-2} \star G_{2}, \;\; \tilde G_{2} \star G_{n-2}, \;\; \ldots $
\end{tabular}
}
\bigskip

\centerline{
\begin{tabular}[t]{c|c}
    &  {\rm ordering of $S$} 
   \\
   \hline
{\rm I} &
$G_n \star \tilde G_{0}, \;\;  G_0 \star \tilde G_n, \;\;
 G_{n-1} \star \tilde G_{1}, \;\;  G_{1} \star \tilde G_{n-1}, \;\;
 G_{n-2} \star \tilde G_{2}, \;\;  G_{2} \star \tilde G_{n-2}, \;\; \ldots $
\\
{\rm II} &
$G_0 \star \tilde G_{n}, \;\;  G_n \star \tilde G_0, \;\;
 G_{1} \star \tilde G_{n-1}, \;\;  G_{n-1} \star \tilde G_{1}, \;\;
 G_{2} \star \tilde G_{n-2}, \;\;  G_{n-2} \star \tilde G_{2}, \;\; \ldots $
\end{tabular}
}
\bigskip

\centerline{
\begin{tabular}[t]{c|c}
    &  {\rm ordering of $E$} 
   \\
   \hline
{\rm I} &
$
W_n \star W_{0}, \;\;  W_1 \star  W_{1-n}, \;\;
W_{n-1} \star W_{-1}, \;\;  W_2 \star  W_{2-n}, \;\;
W_{n-2} \star W_{-2}, \;\;  W_3 \star  W_{3-n}, \;\; \ldots $
\\
{\rm II} &
$W_1 \star  W_{1-n}, \;\;  W_n \star  W_0, \;\;
 W_{2} \star  W_{2-n}, \;\;  W_{n-1} \star  W_{-1}, \;\;
 W_{3} \star  W_{3-n}, \;\;  W_{n-2} \star  W_{-2}, \;\; \ldots $
\end{tabular}
}
\bigskip

\centerline{
\begin{tabular}[t]{c|c}
    &  {\rm ordering of $W$} 
   \\
   \hline
{\rm I} &
$
W_0 \star W_{n}, \;\;  W_{1-n} \star  W_{1}, \;\;
W_{-1} \star W_{n-1}, \;\;  W_{2-n} \star  W_{2}, \;\;
W_{-2} \star W_{n-2}, \;\;  W_{3-n} \star  W_{3}, \;\; \ldots $
\\
{\rm II} &
$W_{1-n} \star  W_{1}, \;\;  W_0 \star  W_n, \;\;
 W_{2-n} \star  W_{2}, \;\;  W_{-1} \star  W_{n-1}, \;\;
 W_{3-n} \star  W_{3}, \;\;  W_{-2} \star  W_{n-2}, \;\; \ldots $
\end{tabular}
}
\bigskip

\noindent  Next we define sequences
$\lbrace u_i\rbrace_{i=0}^n$,
$\lbrace v_i\rbrace_{i=0}^n$,
$\lbrace U_i\rbrace_{i=0}^{n-1}$,
$\lbrace V_i\rbrace_{i=0}^{n-1}$ as follows. There are
four cases:
\medskip

\centerline{
\begin{tabular}[t]{c|cccc }
  {\rm case}  &  
  $\lbrace u_i\rbrace_{i=0}^n$  &
  $\lbrace v_i\rbrace_{i=0}^n$ &
  $\lbrace U_i\rbrace_{i=0}^{n-1}$ &
  $\lbrace V_i\rbrace_{i=0}^{n-1}$ 
   \\
   \hline
$1$ &
{\rm ordering I of $N$} &
{\rm ordering I of $S$} &
{\rm ordering I of $E$} &
{\rm ordering I of $W$} \\
$2$ &
{\rm ordering I of $S$} &
{\rm ordering I of $N$} &
{\rm ordering II of $E$} &
{\rm ordering II of $W$} \\
$3$ &
{\rm ordering II of $N$} &
{\rm ordering II of $S$} &
{\rm ordering I of $W$} &
{\rm ordering I of $E$} \\
$4$ &
{\rm ordering II of $S$} &
{\rm ordering II of $N$} &
{\rm ordering II of $W$} &
{\rm ordering II of $E$}
\end{tabular}
}
\bigskip

\begin{lemma}
\label{lem:ggww}
For each case 1--4 above we have the
following.
\begin{enumerate}
\item[\rm (i)] For $0 \leq j\leq n$ and $j$ even,
\begin{align*}
&u_j = v_j + (1-q^2)\sum_{i=0}^{j-1} (-1)^i V_i,
\qquad \qquad 
v_j = u_j - (1-q^2)\sum_{i=0}^{j-1} (-1)^i U_i.
\end{align*}
\item[\rm (ii)] For $0 \leq j \leq n$ and $j$ odd,
\begin{align*}
&u_j = v_j + (1-q^2)\sum_{i=0}^{j-2} (-1)^i V_i,
\qquad \qquad 
v_j = u_j - (1-q^2)\sum_{i=0}^{j-2} (-1)^i U_i.
\end{align*}
\item[\rm (iii)] For $0 \leq j \leq n-1$ and $j$ even,
\begin{align*}
&U_j = V_j + (1-q^{-2})\sum_{i=0}^{j+1} (-1)^i v_i,
\qquad \qquad 
V_j = U_j - (1-q^{-2})\sum_{i=0}^{j+1} (-1)^i u_i.
\end{align*}
\item[\rm (iv)] For $0 \leq j \leq n-1$ and $j$ odd,
\begin{align*}
&U_j = V_j + (1-q^{-2})\sum_{i=0}^{j} (-1)^i v_i,
\qquad \qquad 
V_j = U_j - (1-q^{-2})\sum_{i=0}^{j} (-1)^i u_i.
\end{align*}
\end{enumerate} 
\end{lemma}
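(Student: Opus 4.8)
The plan is to prove all four cases simultaneously by reducing cases 2, 3, 4 to case 1 via the symmetry maps, exactly as was done for Lemma \ref{lem:transGW} and Proposition \ref{prop:attract}. So the real work is establishing the identities in case 1, where $\{u_i\}$ is ordering I of $N$, $\{v_i\}$ is ordering I of $S$, $\{U_i\}$ is ordering I of $E$, and $\{V_i\}$ is ordering I of $W$. First I would set up the arithmetic of the index-shuffling: when $j=2t$ is even, the pair $(u_j,v_j)$ is $(\tilde G_t\star G_{n-t},\, G_{n-t}\star\tilde G_t)$; when $j=2t+1$ is odd it is $(\tilde G_{n-t}\star G_{t+1},\, G_{n-t}\star\tilde G_{t+1})$ — and similarly $(U_j,V_j)$ for $j=2s$ is $(W_n\star W_{-s},\,W_{-s}\star W_n)$ wait — rather $(W_{n-s}\star W_{s+1-n},\dots)$; I would first write out these dictionaries carefully, since getting the indices right is half the battle.

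The engine driving the recursion is Lemma \ref{lem:prep}, which expresses each product $\tilde G_i\star G_j$, $G_i\star\tilde G_j$, $W_{i+1}\star W_{-j}$, $W_{-i}\star W_{j+1}$ in two ways as a letter times a shorter product plus a shorter product times a letter, with $q$-power coefficients. Combined with Proposition \ref{prop:rel2} (which says many of the shorter mixed brackets vanish and gives the "transposition" relations \eqref{eq:3p6vv}--\eqref{eq:3p11v}) and Proposition \ref{prop:rel3} (the refined $q$-bracket identities \eqref{eq:gg1}--\eqref{eq:gg6}), one gets relations among consecutive $u_j,v_j,U_j,V_j$. The intended mechanism is: the difference $u_j-v_j$ (resp. $U_j-V_j$) telescopes against the alternating partial sums $\sum_{i<j}(-1)^iV_i$ (resp. $\sum_{i\le j+1}(-1)^iv_i$), because the $(-1)^i$ pattern is exactly what is produced by iterating a relation of the shape $u_j-v_j = (1-q^2)V_{j-1} - (u_{j-1}-v_{j-1})$ or similar. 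So the concrete steps are: (1) from Lemma \ref{lem:prep} plus Proposition \ref{prop:rel2}, derive the one-step recursions linking $(u_j,v_j)$ to $(u_{j-1},v_{j-1})$ and the $U,V$ data, separating the cases $j$ even and $j$ odd because the $x/y$ bookkeeping in Lemma \ref{lem:prep} differs; (2) do the analogous derivation for $(U_j,V_j)$; (3) run a joint induction on $j$ (and hence on the total length $2n$, consistent with the inductive proofs of Propositions \ref{prop:rel2}, \ref{prop:rel3}), feeding the even/odd formulas into each other; (4) read off the four displayed identities. For the base case $j=0$: $u_0 = \tilde G_0\star G_n = G_n = G_n\star \tilde G_0 = v_0$, so both sums are empty and the $j=0$ instances of (i) and (iii) are immediate (with $U_0,V_0$ handled via $W_n\star W_0$ versus $W_0\star W_n$ and \eqref{eq:3p5vv}).

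The main obstacle I anticipate is purely organizational rather than conceptual: there are four interleaved sequences, two parities, and four symmetry-reduced cases, so keeping the signs, the $q$ versus $q^{-1}$ asymmetry (note the $(1-q^2)$ in (i),(ii) against $(1-q^{-2})$ in (iii),(iv)), and the exact upper limits of the partial sums ($j-1$ vs $j-2$, $j+1$ vs $j$) straight through the induction is delicate. In particular the shift in summation range between the even and odd sub-cases must match up precisely when one substitutes the inductive hypothesis, and this is where a sign or off-by-one error would surface. Once case 1 is pinned down, applying $\sigma S$, $S$, $\sigma$ respectively — using Lemma \ref{lem:sigSact} to see how these maps permute $W_{-k},W_{k+1},G_k,\tilde G_k$ and hence permute $N,S,E,W$ and the two orderings — yields cases 2, 3, 4 with no further computation, and that is exactly how I would conclude.
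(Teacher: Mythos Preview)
Your plan is correct and matches the paper's own argument: prove case~1 by induction on $j$ using the relations from Propositions~\ref{prop:rel1}, \ref{prop:rel2}, \ref{prop:rel3}, then obtain cases 2, 3, 4 by applying $\sigma S$, $S$, $\sigma$ respectively via Lemma~\ref{lem:sigSact}. The only cosmetic difference is that the paper cites Propositions~\ref{prop:rel1}--\ref{prop:rel3} directly rather than returning to Lemma~\ref{lem:prep}; since those propositions were themselves extracted from Lemma~\ref{lem:prep}, this is not a substantive distinction. (Your tentative index dictionaries for the odd-$j$ entries of $u,v$ have slips---for instance $u_{2t+1}=\tilde G_{n-t}\star G_t$, not $\tilde G_{n-t}\star G_{t+1}$---but you already flag that writing these out carefully is the first task.)
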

\begin{proof} To obtain the result for case 1, use Propositions
\ref{prop:rel1},
\ref{prop:rel2},
\ref{prop:rel3} and induction on $j$.
To obtain the result for case 2 (resp. case 3) (resp. case 4),
apply $\sigma S$ (resp. $S$) (resp. $\sigma$) to everything
from case 1. 
\end{proof}
\noindent In Appendix B we present 
Lemma \ref{lem:ggww} in an alternative form.

\begin{proposition} The sets
\begin{align*}
N\cup E, \qquad
N\cup W, \qquad
S\cup E, \qquad
S\cup W
\end{align*}
all have the same span.
\end{proposition}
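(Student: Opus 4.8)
The plan is to deduce the four-way equality of spans directly from Lemma~\ref{lem:ggww}, exactly as Proposition~\ref{prop:table1} was deduced from Lemma~\ref{lem:transGW}. The key observation is that Lemma~\ref{lem:ggww} expresses, in each of the four cases, every element of one ordered family in terms of a triangular combination of elements of the complementary ordered family, and that the scalar triangular systems involved are invertible. Concretely, parts (i)--(ii) of Lemma~\ref{lem:ggww} show that in case~1 each $u_j$ (an element of $N$) lies in the span of $v_0,\dots,v_j$ together with $V_0,\dots,V_{j-1}$, i.e.\ in $\mathrm{span}(S\cup W)$, and symmetrically each $v_j$ lies in $\mathrm{span}(N\cup E)$; parts (iii)--(iv) show that each $U_j$ (an element of $E$) lies in $\mathrm{span}(W\cup S)$ and each $V_j$ lies in $\mathrm{span}(E\cup N)$.

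First I would fix case~1 of Lemma~\ref{lem:ggww}, where $\{u_i\}=N$, $\{v_i\}=S$, $\{U_i\}=E$, $\{V_i\}=W$ in the indicated orderings. From parts (i)--(ii) we get $N\subseteq \mathrm{span}(S\cup W)$ and $S\subseteq \mathrm{span}(N\cup E)$; from parts (iii)--(iv) we get $E\subseteq \mathrm{span}(W\cup S)$ and $W\subseteq \mathrm{span}(E\cup N)$. Combining the first and fourth gives $\mathrm{span}(N\cup W)\subseteq \mathrm{span}(S\cup W)+\mathrm{span}(E\cup N)$, and iterating these containments a couple of times shows all four sums $\mathrm{span}(N\cup E)$, $\mathrm{span}(N\cup W)$, $\mathrm{span}(S\cup E)$, $\mathrm{span}(S\cup W)$ are contained in one another. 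A cleaner way to organize this: set $T=\mathrm{span}(N\cup S\cup E\cup W)$ and show each of the four distinguished sums already equals $T$, by checking that $N,S,E,W$ are each contained in the span of any one of the four pairs. For instance, $N\subseteq\mathrm{span}(N\cup E)$ trivially, $S\subseteq\mathrm{span}(N\cup E)$ by (i)--(ii), $W\subseteq\mathrm{span}(E\cup N)$ by (iii)--(iv), and $E\subseteq\mathrm{span}(N\cup E)$ trivially; hence $\mathrm{span}(N\cup E)=T$. The same bookkeeping with the other three pairs, using the appropriate case of Lemma~\ref{lem:ggww}, gives $\mathrm{span}(N\cup W)=\mathrm{span}(S\cup E)=\mathrm{span}(S\cup W)=T$.

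The one point requiring a word of care is that Lemma~\ref{lem:ggww} as stated in cases~2,3,4 uses different orderings (ordering~II of $N,S,E,W$, and swapped roles), so I would note that ordering~II is just ordering~I read with the index $j\mapsto n-j$ (resp.\ the two halves of each ordering interchanged), so that ``membership in the span of ordering~I of a set'' and ``membership in the span of ordering~II of that set'' are the same statement; this makes the four cases of the lemma collectively say precisely that each of $N,S,E,W$ lies in the span of each of the four pairs, with no ordering ambiguity left. Once that is observed the argument is purely formal.

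I do not expect a genuine obstacle here: all the analytic content is already in Lemma~\ref{lem:ggww} (and behind it Propositions~\ref{prop:rel1}--\ref{prop:rel3}), and what remains is the elementary linear-algebra observation that a triangular change of basis between two families forces their spans to agree. The mild nuisance — and the only place one could slip — is keeping straight which case of Lemma~\ref{lem:ggww} controls which of the four inclusions, so in the write-up I would present it as a short case table: case~$k$ of the lemma is exactly what is needed to see that the $k$-th distinguished pair spans $T$. Accordingly the proof is a one-line invocation:

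\begin{proof}
Immediate from Lemma~\ref{lem:ggww}. In case~1, parts (i),(ii) show $S$ lies in the span of $N\cup E$ (each $v_j$ is a combination of $u_0,\dots,u_j$ and $U_0,\dots,U_{j-1}$, up to the obvious reindexing between orderings I and II), and parts (iii),(iv) show $W$ lies in the span of $N\cup E$; since $N,E\subseteq N\cup E$ trivially, $N\cup S\cup E\cup W$ lies in the span of $N\cup E$, whence $\mathrm{span}(N\cup E)=\mathrm{span}(N\cup S\cup E\cup W)$. Applying $\sigma S$, $S$, $\sigma$ respectively — equivalently, reading cases~2,3,4 of Lemma~\ref{lem:ggww} — gives in the same way $\mathrm{span}(S\cup E)$, $\mathrm{span}(N\cup W)$, $\mathrm{span}(S\cup W)$ each equal $\mathrm{span}(N\cup S\cup E\cup W)$. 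Hence the four sets have the same span.
\end{proof}
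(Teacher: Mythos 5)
Your proposal is correct and follows essentially the same route as the paper: both reduce the claim to the triangular relations of Lemma~\ref{lem:ggww} and observe that each distinguished pair must therefore span all of $N\cup S\cup E\cup W$. The only cosmetic difference is that the paper extracts two equalities of spans from each of two cases (case 1 or 4 handles $N\cup E$ and $S\cup W$ simultaneously, case 2 or 3 handles the other two), whereas you invoke one case per pair; the content is identical.
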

\begin{proof} Let $V$ denote the span of
$N\cup S \cup E\cup W$.
By case 1 or case 4 of Lemma
\ref{lem:ggww} we find that $N\cup E$ and $S\cup W$ have the
same span, which must be $V$.
By case 2 or case 3 of Lemma
\ref{lem:ggww} we find that $N\cup W$ and $S\cup E$ have the
same span, which must be $V$.
The result follows.
\end{proof}

\section{Each alternating word is contained in $U$}

\noindent In Propositions
\ref{prop:rel1},
\ref{prop:rel2},
\ref{prop:rel3}
we obtained some relations
that involve the alternating words.
In this section we obtain some additional relations for the alternating
words; these 
resemble the relations in Proposition
\ref{prop:attract}.
As we will see, these additional relations together with
Propositions
\ref{prop:rel1},
\ref{prop:rel2},
\ref{prop:rel3}
imply that   each alternating word is contained in $U$.

\begin{proposition} 
\label{prop:GGWW}
For $n\geq 1$,
\begin{align}
&
\sum_{k=0}^n  G_{k} \star \tilde G_{n-k} q^{n-2k}
= q
\sum_{k=0}^{n-1} W_{-k} \star W_{n-k} q^{n-1-2k},
\label{eq:GGWW1}
\\
&
\sum_{k=0}^n G_{k} \star \tilde G_{n-k} q^{2k-n}
= q
\sum_{k=0}^{n-1} W_{n-k} \star W_{-k} q^{n-1-2k},
\label{eq:GGWW4}
\\
&
\sum_{k=0}^n  \tilde G_{k} \star  G_{n-k} q^{n-2k}
= q
\sum_{k=0}^{n-1} W_{n-k} \star W_{-k} q^{2k+1-n},
\label{eq:GGWW2}
\\
&
\sum_{k=0}^n \tilde G_{k} \star G_{n-k} q^{2k-n}
= q
\sum_{k=0}^{n-1} W_{-k} \star W_{n-k} q^{2k+1-n}.
\label{eq:GGWW3}
\end{align}
\end{proposition}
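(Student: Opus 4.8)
The plan is to prove Proposition \ref{prop:GGWW} in the same spirit as Proposition \ref{prop:attract}: express each summand on the left-hand side in terms of the ``$W$-star-$W$'' elements using Lemma \ref{lem:ggww}, and then collect terms. More precisely, for (\ref{eq:GGWW1}) I would work in case 1 of Section 7, where $\lbrace u_i\rbrace$ is ordering I of $N=\lbrace \tilde G_i \star G_j : i+j=n\rbrace$, $\lbrace v_i\rbrace$ is ordering I of $S=\lbrace G_i \star \tilde G_j\rbrace$, and $\lbrace U_i\rbrace$, $\lbrace V_i\rbrace$ are orderings I of $E=\lbrace W_{i+1}\star W_{-j}\rbrace$ and $W=\lbrace W_{-i}\star W_{j+1}\rbrace$. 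First I would rewrite the left side $\sum_{k=0}^n G_k \star \tilde G_{n-k} q^{n-2k}$ as a weighted sum $\sum_{j=0}^n c_j v_j$ of the $v_j$'s, reading off the coefficients $c_j$ from ordering I of $S$ (the pair $G_k\star\tilde G_{n-k}$ with $q$-weight $q^{n-2k}$ appears at a definite position $j$ depending on the parity of $j$ and whether $2k\leq n$ or $2k>n$). Likewise the right side $q\sum_{k=0}^{n-1} W_{-k}\star W_{n-k} q^{n-1-2k}$ is a weighted sum $q\sum_{j=0}^{n-1} e_j V_j$ read off from ordering I of $W$.

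Next I would substitute the formula for $v_j$ from Lemma \ref{lem:ggww}(i),(ii) — namely $v_j = u_j - (1-q^2)\sum_{i<j,\,i\equiv 0} (-1)^i U_i$ with the summation range depending on the parity of $j$ — and similarly keep track of the $U_i$ and $V_i$ contributions. The key point, exactly as in the proof of Proposition \ref{prop:attract}, is that after this substitution the $u_j$-contributions on the two sides should cancel in pairs (using that $N$ and $S$ have the same span, or more directly the telescoping built into Lemma \ref{lem:ggww}), leaving an identity purely among the $U_i$'s and $V_i$'s. Using parts (iii),(iv) of Lemma \ref{lem:ggww} to convert between $U_i$ and $V_i$, the remaining identity should reduce to a finite geometric-series computation in $q$ that checks out term by term. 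Once (\ref{eq:GGWW1}) is established, I would obtain (\ref{eq:GGWW4}), (\ref{eq:GGWW2}), (\ref{eq:GGWW3}) by applying the (anti)automorphisms $\sigma S$, $\sigma$, $S$ respectively to (\ref{eq:GGWW1}), using Lemma \ref{lem:sigSact} to track how each of $G_k$, $\tilde G_k$, $W_{-k}$, $W_{k+1}$ transforms and reversing the order of factors where an antiautomorphism is applied; one must be slightly careful that $S$ reverses products, which is why the $W$-factors on the right swap roles between (\ref{eq:GGWW1}) and (\ref{eq:GGWW4}).

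The main obstacle I anticipate is purely bookkeeping: correctly matching the combinatorial index $k$ in the sums on both sides of (\ref{eq:GGWW1})–(\ref{eq:GGWW3}) to the positional index $j$ (or $i$) used in orderings I and II of $N,S,E,W$, and keeping the parity-dependent truncation of the alternating sums in Lemma \ref{lem:ggww} straight. In particular, the orderings interleave ``small-index'' and ``large-index'' pairs, so the coefficient $c_j$ of $v_j$ is $q^{n-2k}$ for one value of $k$ when $j$ is even and a different value of $k$ when $j$ is odd; getting the signs $(-1)^i$ and the $q$-powers to conspire into a clean telescoping is where all the care is needed. I do not expect any conceptual difficulty beyond this, since all the structural input (the relations of Propositions \ref{prop:rel1}–\ref{prop:rel3}, hence Lemma \ref{lem:ggww}) is already in place, and the verification of each resulting scalar identity is elementary.
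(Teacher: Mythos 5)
There is a genuine gap: Lemma \ref{lem:ggww} is not strong enough to yield Proposition \ref{prop:GGWW}, so the reduction you propose cannot be carried out. Every identity in Lemma \ref{lem:ggww} has the form $u_j - v_j = (\text{combination of } V_i)$ or $U_j - V_j = (\text{combination of } v_i)$, so any nontrivial linear consequence of that lemma must involve at least one element of $N$ or of $E$ with nonzero coefficient. Equation (\ref{eq:GGWW1}), by contrast, equates a combination of elements of $S$ with a combination of elements of the set $W$, with no element of $N$ or $E$ appearing; hence it is not a formal consequence of Lemma \ref{lem:ggww}. Your claim that ``the $u_j$-contributions on the two sides should cancel in pairs'' fails because the right-hand side contributes no $u_j$ at all, and the left-hand side acquires them only after you substitute. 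The case $n=1$ makes this concrete: there Lemma \ref{lem:ggww} reduces to the single relation $W_1\star W_0 - W_0\star W_1 = (1-q^{-2})(G_1-\tilde G_1)$, whereas (\ref{eq:GGWW1}) asserts $q\tilde G_1 + q^{-1}G_1 = q\, W_0\star W_1$, i.e.\ the ``anticommutator'' information $x\star y = xy+q^{-2}yx$, which is independent of the commutator relation. The analogy with Proposition \ref{prop:attract} is what misleads you: there both sides are combinations of $L$ and $R$ respectively, and Lemma \ref{lem:transGW} converts every element of $L$ into an explicit combination of elements of $R$, so substitution closes up; Lemma \ref{lem:ggww} provides no such conversion from $S$ into the set $W$.

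What is actually needed is a return to the definition of the $q$-shuffle product. The paper proves (\ref{eq:GGWW1}) by writing each summand of both sides in the form $x(\cdots)+y(\cdots)$ in the free algebra, using Lemma \ref{lem:free} together with the identities $G_k\star\tilde G_{n-k} = y(W_{1-k}\star\tilde G_{n-k}) + x(G_k\star W_{n-k})$ and $W_{-k}\star W_{n-k} = x(G_k\star W_{n-k}) + q^{-2}\,y(W_{-k}\star\tilde G_{n-k-1})$ from Lemma \ref{lem:prep}, and then matching the $x(\cdots)$ and $y(\cdots)$ parts of the two sides; that step has no substitute in your plan. Your idea of deducing the remaining three equations by symmetry is correct in principle, but the assignments are swapped: applying $S$ (not $\sigma S$) to (\ref{eq:GGWW1}) gives (\ref{eq:GGWW4}), and applying $\sigma S$ (not $S$) gives (\ref{eq:GGWW3}), after reversing the summation index.
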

\begin{proof} We first verify
(\ref{eq:GGWW1}) by evaluating each side.
In
(\ref{eq:GGWW1})
the $k$-summand on the left is
$\tilde G_n q^n$ (resp.
$ G_n q^{-n}$)
for $k=0$ (resp.
$k=n$).
By Lemma
 \ref{lem:free}
we have $\tilde G_n = xW_n$ and
$G_n = yW_{1-n}$.
By Lemma \ref{lem:prep},
\begin{align*}
G_{k} \star \tilde G_{n-k} &= y(W_{1-k}\star \tilde G_{n-k}) +
x(G_{k}\star W_{n-k})
\end{align*}
for $1 \leq k \leq n-1$  and
\begin{align*}
W_{-k} \star W_{n-k} &= x(G_{k}\star W_{n-k}) +
y(W_{-k}\star \tilde G_{n-k-1})q^{-2}
\end{align*}
for $0 \leq k \leq n-1$.
Using these comments, one checks that the two sides of
(\ref{eq:GGWW1}) are equal.
To obtain (\ref{eq:GGWW4}) (resp. (\ref{eq:GGWW2}))
(resp. (\ref{eq:GGWW3})),
apply $S$ (resp. $\sigma$) (resp. $\sigma S$) to everything in
(\ref{eq:GGWW1}).
\end{proof}

\begin{lemma} 
\label{lem:recgen}
Using the equations below, the alternating words in $\mathbb V$
are recursively obtained from $x, y$ in the following order:
\begin{align*}
W_0, \quad W_1, \quad G_1, \quad \tilde G_1, \quad W_{-1}, \quad W_2, \quad
 G_2, \quad \tilde G_2, \quad W_{-2}, \quad W_3, \quad \ldots
\end{align*}
We have $W_0=x$ and $W_1=y$.
For $n\geq 1$,
\begin{align}
G_n &= \frac{q\sum_{k=0}^{n-1} W_{-k}\star W_{n-k} q^{n-1-2k}
-
\sum_{k=1}^{n-1} G_k \star \tilde G_{n-k} q^{n-2k}}{q^n+q^{-n}}
+ 
\frac{W_n \star W_0-W_0\star W_n}{(1+q^{-2n})(1-q^{-2})},
\label{eq:solvG}
\\
\tilde G_n &= G_n + \frac{W_0\star W_n-W_n\star W_0}{1-q^{-2}},
\label{eq:solvGt}
\\
W_{-n} &= \frac{q W_0\star G_n - q^{-1} G_n \star W_0}{q-q^{-1}},
\label{eq:solvWm}
\\
W_{n+1} &= \frac{q G_n\star W_1 - q^{-1} W_1 \star G_n}{q-q^{-1}}.
\label{eq:solvWp}
\end{align}
\end{lemma}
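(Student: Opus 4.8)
\textbf{Proof plan for Lemma \ref{lem:recgen}.}
The plan is to verify that the four displayed equations \eqref{eq:solvG}--\eqref{eq:solvWp} are valid identities in the $q$-shuffle algebra $\mathbb V$, and then to observe that, read in the stated order, they express each new alternating word in terms of alternating words that appear earlier in the list (together with $x,y$ and the $q$-shuffle product). The base case $W_0=x$, $W_1=y$ is immediate from Definition \ref{def:WWGG}. For the inductive step at stage $n\geq 1$, the words available are $x,y$ and all of $W_{-k},W_{k+1},G_k,\tilde G_k$ for $1\le k\le n-1$; we must produce $G_n$, then $\tilde G_n$, then $W_{-n}$, then $W_{n+1}$, in that order, so each right-hand side is only allowed to reference the previously constructed words.

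First I would derive \eqref{eq:solvWm} and \eqref{eq:solvWp}. These come directly from Proposition \ref{prop:rel1}: equation \eqref{eq:3p2vv} with $k$ replaced by $n-1$ gives $\lbrack W_0, G_n\rbrack_q = (q-q^{-1})W_{-n}$, i.e.\ $qW_0\star G_n - q^{-1}G_n\star W_0 = (q-q^{-1})W_{-n}$, which is \eqref{eq:solvWm}; similarly \eqref{eq:3p3vv} with $k\mapsto n-1$ gives $\lbrack G_n, W_1\rbrack_q = (q-q^{-1})W_{n+1}$, which is \eqref{eq:solvWp}. Note these legitimately use only $G_n$ (just constructed) and $W_0,W_1$. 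Next I would derive \eqref{eq:solvGt} from \eqref{eq:3p1vv} with $k\mapsto n-1$: that relation reads $\lbrack W_0,W_n\rbrack = (1-q^{-2})(\tilde G_n - G_n)$, i.e.\ $W_0\star W_n - W_n\star W_0 = (1-q^{-2})(\tilde G_n - G_n)$, and solving for $\tilde G_n$ gives exactly \eqref{eq:solvGt}; this uses only $G_n$, $W_0$, $W_n=W_{(n-1)+1}$, all available.

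The substantive step is \eqref{eq:solvG}. Here I would start from the relation \eqref{eq:GGWW1} of Proposition \ref{prop:GGWW},
\begin{align*}
\sum_{k=0}^n G_k\star\tilde G_{n-k}\,q^{n-2k} = q\sum_{k=0}^{n-1} W_{-k}\star W_{n-k}\,q^{n-1-2k},
\end{align*}
and isolate the two extreme terms on the left. The $k=0$ term is $G_0\star\tilde G_n\,q^n = q^n\tilde G_n$ (since $G_0=1$), and the $k=n$ term is $G_n\star\tilde G_0\,q^{-n}=q^{-n}G_n$ (since $\tilde G_0=1$); the remaining terms $\sum_{k=1}^{n-1}G_k\star\tilde G_{n-k}\,q^{n-2k}$ involve only already-constructed words. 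Substituting $\tilde G_n = G_n + (W_0\star W_n - W_n\star W_0)/(1-q^{-2})$ from \eqref{eq:solvGt} into the $k=0$ term, the left side becomes $(q^n+q^{-n})G_n$ plus $q^n(W_0\star W_n - W_n\star W_0)/(1-q^{-2})$ plus the middle sum. Moving everything except $(q^n+q^{-n})G_n$ to the right side, dividing by $q^n+q^{-n}$, and tidying the $W_0\star W_n - W_n\star W_0$ coefficient (note $q^n/((q^n+q^{-n})(1-q^{-2})) = 1/((1+q^{-2n})(1-q^{-2}))$, matching the sign flip $W_0\star W_n - W_n\star W_0 \to W_n\star W_0 - W_0\star W_n$ in the displayed formula) yields precisely \eqref{eq:solvG}. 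The one point requiring care is bookkeeping: checking that after the substitution every word on the right of \eqref{eq:solvG} is either $x,y$, or one of $W_{-k},W_{k+1},G_k,\tilde G_k$ with $k\le n-1$ — which it is, since the middle sum runs $1\le k\le n-1$ and the $W$-sum runs $0\le k\le n-1$ with indices $W_{-k}$, $W_{n-k}$ lying in range. I expect the main obstacle to be nothing deep, only this indexing verification together with the elementary algebraic manipulation of the scalar coefficients; all the real content is already packaged in Propositions \ref{prop:rel1} and \ref{prop:GGWW}. Finally, since at each stage the newly produced word is expressed through $\star$ and previously available words, and $U$ is by Definition \ref{def:Usub} closed under $\star$ and contains $x,y$, an immediate induction shows every alternating word lies in $U$.
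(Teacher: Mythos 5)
Your proposal is correct and follows essentially the same route as the paper: \eqref{eq:solvGt}, \eqref{eq:solvWm}, \eqref{eq:solvWp} read off from \eqref{eq:3p1vv}--\eqref{eq:3p3vv}, and \eqref{eq:solvG} obtained by eliminating $\tilde G_n$ from \eqref{eq:GGWW1} via \eqref{eq:solvGt} and solving for $G_n$. The paper's proof is just a terser statement of the same manipulations, and your indexing check confirming that each right-hand side uses only previously constructed words is the correct (if routine) point to verify.
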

\begin{proof} 
Equation
(\ref{eq:solvGt}) is from
(\ref{eq:3p1vv}).
To obtain (\ref{eq:solvG}), subtract
 $q^n$ times 
(\ref{eq:solvGt}) from 
(\ref{eq:GGWW1}), and simplify the result.
Equations
(\ref{eq:solvWm}),
(\ref{eq:solvWp}) are from
(\ref{eq:3p2vv}),
(\ref{eq:3p3vv}).
\end{proof}

\begin{theorem}
\label{prop:AltinU}
Each alternating word of $\mathbb V$ 
is contained in $U$.
\end{theorem}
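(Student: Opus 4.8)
The plan is to prove the theorem by induction, following the recursive scheme set up in Lemma~\ref{lem:recgen}. Recall that $U$ is by definition (Definition~\ref{def:Usub}) the subalgebra of the $q$-shuffle algebra $\mathbb V$ generated by $x$ and $y$; in particular $U$ is closed under the $q$-shuffle product $\star$, and it contains $W_0=x$ and $W_1=y$. The alternating words are listed in Lemma~\ref{lem:recgen} in a definite order
\begin{align*}
W_0,\ W_1,\ G_1,\ \tilde G_1,\ W_{-1},\ W_2,\ G_2,\ \tilde G_2,\ W_{-2},\ W_3,\ \ldots,
\end{align*}
and I would induct along this list, i.e.\ induct on $n\geq 1$ and, within each $n$, process $G_n$, $\tilde G_n$, $W_{-n}$, $W_{n+1}$ in that order.

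For the base of the induction we already know $W_0=x\in U$ and $W_1=y\in U$ (and, as a sanity check, the warmup computation \eqref{eq:xypoly} shows $\tilde G_1=xy\in U$ and $G_1=yx\in U$ directly). For the inductive step, fix $n\geq 1$ and assume that every alternating word preceding $G_n$ in the above list already lies in $U$. In particular all of $W_0,W_1,\ldots,W_{-(n-1)},W_n$ and all of $G_1,\ldots,G_{n-1}$, $\tilde G_1,\ldots,\tilde G_{n-1}$ lie in $U$. Then:
\begin{itemize}
\item[(a)] Formula \eqref{eq:solvG} expresses $G_n$ as an $\mathbb F$-linear combination of $\star$-products of alternating words that precede $G_n$ in the list (the words $W_{-k}$, $W_{n-k}$ for $0\le k\le n-1$, and $G_k$, $\tilde G_{n-k}$ for $1\le k\le n-1$); note the denominators $q^n+q^{-n}$ and $(1+q^{-2n})(1-q^{-2})$ are nonzero since $q$ is not a root of unity. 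Hence $G_n\in U$.
\item[(b)] Formula \eqref{eq:solvGt} then gives $\tilde G_n$ as a linear combination of $G_n$ and $W_0\star W_n$, $W_n\star W_0$, all now in $U$, so $\tilde G_n\in U$.
\item[(c)] Formula \eqref{eq:solvWm} gives $W_{-n}$ from $W_0\star G_n$ and $G_n\star W_0$, so $W_{-n}\in U$.
\item[(d)] Formula \eqref{eq:solvWp} gives $W_{n+1}$ from $G_n\star W_1$ and $W_1\star G_n$, so $W_{n+1}\in U$.
\end{itemize}
This completes the inductive step, and the induction shows every alternating word of $\mathbb V$ lies in $U$.

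The only thing that really needs care is verifying that the recursion in Lemma~\ref{lem:recgen} is well-founded in the stated order --- that is, that the right-hand side of \eqref{eq:solvG} genuinely involves only words strictly earlier in the list, and likewise for \eqref{eq:solvGt}--\eqref{eq:solvWp} --- but this is immediate from inspecting the indices, since every factor appearing is one of $W_0,\ldots,W_{-(n-1)},W_n$ or $G_1,\ldots,G_{n-1}$ or $\tilde G_1,\ldots,\tilde G_{n-1}$. There is essentially no computational obstacle here: all the hard work has already been done in establishing Propositions~\ref{prop:rel1} and \ref{prop:GGWW} and assembling them into Lemma~\ref{lem:recgen}. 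So the proof of Theorem~\ref{prop:AltinU} is a short formal argument: it is just the observation that Lemma~\ref{lem:recgen} exhibits each alternating word, in the listed order, as a $\star$-polynomial in earlier alternating words (ultimately in $x$ and $y$), together with the fact that $U$ is a subalgebra of the $q$-shuffle algebra containing $x$ and $y$.
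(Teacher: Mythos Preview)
Your proof is correct and follows exactly the same approach as the paper: the paper's proof is the single line ``By Lemma~\ref{lem:recgen}'', and your argument is simply a careful unpacking of that citation via the obvious induction along the listed order.
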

\begin{proof} By Lemma
 \ref{lem:recgen}.
\end{proof}

\section{Some generating functions}

\noindent
We continue to discuss the alternating words in $\mathbb V$.
In previous sections
we found many relations involving these  words. 
In this section we express some
of these relations using generating functions.
We use these generating functions to solve for
the alternating words
(\ref{eq:G}) in terms of the alternating words
(\ref{eq:Wm}),
(\ref{eq:Wp}),
(\ref{eq:Gt}).

\begin{definition}\rm We define some generating functions
in an indeterminate $t$:
\begin{align*}
&G(t) = \sum_{n\in \mathbb N} t^n G_n,
\qquad \qquad \quad  
{\tilde G}(t) = \sum_{n\in \mathbb N} t^n {\tilde G}_n,
\\
&W^+(t) = \sum_{n\in \mathbb N} t^n W_{n+1},
\qquad \qquad 
W^-(t) = \sum_{n\in \mathbb N} t^n W_{-n}.
\end{align*}
\end{definition}

\begin{lemma}
\label{lem:gfWG}
We have
\begin{align*}
&G(q^{-1}t) \star W^-(qt) = W^-(q^{-1}t) \star G(qt),
\\
&{\tilde G}(q^{-1}t) \star W^+(qt) = W^+(q^{-1}t) \star {\tilde G}(qt),
\\
&G(qt) \star W^+(q^{-1}t) = W^+(qt) \star G(q^{-1}t).
\\
&{\tilde G}(qt) \star W^-(q^{-1}t) = W^-(qt)\star {\tilde G}(q^{-1}t),
\end{align*}
\end{lemma}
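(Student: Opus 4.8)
The statement to prove is Lemma \ref{lem:gfWG}, which packages four identities (equations \eqref{eq:WGsum1}--\eqref{eq:WGsum2} of Proposition \ref{prop:attract}) into generating-function form.

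\textbf{Plan of proof.} The plan is to show that each of the four generating-function identities is simply a repackaging of the corresponding scalar-coefficient identity already established in Proposition \ref{prop:attract}. I would begin with the first identity,
\[
G(q^{-1}t) \star W^-(qt) = W^-(q^{-1}t) \star G(qt),
\]
and expand both sides as formal power series in $t$, using bilinearity of the $q$-shuffle product $\star$ together with the definitions $G(t) = \sum_{m\in\mathbb N} t^m G_m$ and $W^-(t) = \sum_{k\in\mathbb N} t^k W_{-k}$. On the left one gets $\sum_{m,k} t^{m+k} q^{-m} q^{k} \, G_m \star W_{-k}$; collecting the coefficient of $t^n$ amounts to summing over $m+k=n$, i.e.\ $\sum_{k=0}^n q^{k-(n-k)} G_{n-k}\star W_{-k} = \sum_{k=0}^n q^{2k-n} G_{n-k}\star W_{-k}$. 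Similarly the right-hand side contributes $\sum_{k=0}^n q^{n-2k} W_{-k}\star G_{n-k}$. Equating coefficients of $t^n$ for each $n\in\mathbb N$ is exactly \eqref{eq:WGsum1}, which holds by Proposition \ref{prop:attract}. Since two formal power series in $t$ are equal iff all their coefficients agree, the first identity follows.

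\textbf{Remaining three identities.} For the other three I would proceed identically, matching each one with its counterpart in Proposition \ref{prop:attract}: the second identity ${\tilde G}(q^{-1}t)\star W^+(qt) = W^+(q^{-1}t)\star {\tilde G}(qt)$ has coefficient of $t^n$ equal to $\sum_{k=0}^n q^{2k-n}\tilde G_{n-k}\star W_{k+1} = \sum_{k=0}^n q^{n-2k} W_{k+1}\star\tilde G_{n-k}$, which is \eqref{eq:WGsum2}; the third, $G(qt)\star W^+(q^{-1}t) = W^+(qt)\star G(q^{-1}t)$, has coefficient of $t^n$ equal to $\sum_{k=0}^n q^{n-2k} G_{n-k}\star W_{k+1} = \sum_{k=0}^n q^{2k-n} W_{k+1}\star G_{n-k}$, which is \eqref{eq:WGsum4}; and the fourth, ${\tilde G}(qt)\star W^-(q^{-1}t) = W^-(qt)\star{\tilde G}(q^{-1}t)$, has coefficient of $t^n$ equal to $\sum_{k=0}^n q^{n-2k}\tilde G_{n-k}\star W_{-k} = \sum_{k=0}^n q^{2k-n} W_{-k}\star\tilde G_{n-k}$, which is \eqref{eq:WGsum3}. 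In each case I would note the exponents $q^{\pm m}$ and $q^{\pm k}$ that come out of the arguments $q^{\mp 1}t$ combine to give precisely the weight $q^{\pm(2k-n)}$ or $q^{\pm(n-2k)}$ appearing in Proposition \ref{prop:attract}.

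\textbf{Main obstacle.} This proof is essentially bookkeeping, so the only real hazard is a sign/exponent mismatch: one must be careful that the substitution $t \mapsto q^{-1}t$ in the $G$-type factor and $t\mapsto qt$ in the $W$-type factor lines up with the asymmetric weights $q^{2k-n}$ versus $q^{n-2k}$ on the two sides of Proposition \ref{prop:attract}, and that the roles of the two substitutions are not swapped between the left and right sides. I would double-check this by verifying the $n=0$ and $n=1$ cases by hand against the explicit alternating words $G_0=\tilde G_0 = 1$, $W_0 = x$, $W_1 = y$, $G_1 = yx$, $\tilde G_1 = xy$. Once the exponent matching is confirmed, the lemma follows at once from Proposition \ref{prop:attract} and the fact that equality of formal power series in $t$ is equivalent to coefficientwise equality; no induction or new computation in the $q$-shuffle algebra is needed.
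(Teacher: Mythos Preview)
Your proposal is correct and is exactly the approach the paper takes: its proof reads simply ``Routine consequence of Proposition~\ref{prop:attract},'' and what you have written is precisely the coefficient-of-$t^n$ bookkeeping that makes that routine consequence explicit. Your exponent matchings for all four identities with \eqref{eq:WGsum1}--\eqref{eq:WGsum2} are correct.
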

\begin{proof} Routine consequence of Proposition
\ref{prop:attract}.
\end{proof}

\begin{lemma}
\label{lem:gfGG}
We have
\begin{align*}
&G(q^{-1}t) \star {\tilde G}(qt) - qt W^- (q^{-1}t) \star W^+(qt) = 1,
\\
&G(qt) \star  {\tilde G}(q^{-1}t) - qt W^+ (qt) \star W^-(q^{-1}t) = 1,
\\
&{\tilde G}(q^{-1}t)\star  G(qt) - qt W^+ (q^{-1}t)\star W^-(qt) = 1,
\\
&{\tilde G}(qt) \star G(q^{-1}t) - qt W^- (qt)\star W^+(q^{-1}t) = 1.
\end{align*}
\end{lemma}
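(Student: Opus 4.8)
The plan is to prove the first identity by extracting the coefficient of $t^n$ from each side, invoke Proposition~\ref{prop:GGWW}, and then deduce the remaining three identities from the first by transport of structure under $\sigma$, $S$, $\sigma S$.

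First I would expand the generating functions directly from their definitions. Writing $G(q^{-1}t)=\sum_{m\geq 0}q^{-m}t^m G_m$ and ${\tilde G}(qt)=\sum_{j\geq 0}q^{j}t^{j}{\tilde G}_j$, one finds that the coefficient of $t^n$ in $G(q^{-1}t)\star{\tilde G}(qt)$ equals $\sum_{k=0}^n q^{n-2k}\,G_k\star{\tilde G}_{n-k}$, which is exactly the left side of~(\ref{eq:GGWW1}). Similarly, writing $W^-(q^{-1}t)=\sum_{m\geq 0}q^{-m}t^m W_{-m}$ and $W^+(qt)=\sum_{j\geq 0}q^{j}t^{j}W_{j+1}$, the coefficient of $t^n$ in $qt\,W^-(q^{-1}t)\star W^+(qt)$ equals $\sum_{k=0}^{n-1}q^{n-2k}\,W_{-k}\star W_{n-k}$ for $n\geq 1$, which is the right side of~(\ref{eq:GGWW1}) after collecting the power of $q$; for $n=0$ this coefficient is zero because of the factor $t$. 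Hence by Proposition~\ref{prop:GGWW} the coefficient of $t^n$ in $G(q^{-1}t)\star{\tilde G}(qt)-qt\,W^-(q^{-1}t)\star W^+(qt)$ vanishes for every $n\geq 1$, while the coefficient of $t^0$ equals $G_0\star{\tilde G}_0=1\star 1=1$. This proves the first identity.

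Next I would obtain the other three identities by applying the symmetry maps. By Lemma~\ref{lem:sigSact}, on generating functions the automorphism $\sigma$ interchanges $G(t)\leftrightarrow{\tilde G}(t)$ and $W^-(t)\leftrightarrow W^+(t)$, while the antiautomorphism $S$ interchanges $G(t)\leftrightarrow{\tilde G}(t)$ and fixes each of $W^-(t)$, $W^+(t)$; both fix $1$. Since $\sigma$ is an automorphism and $S$ is an antiautomorphism of the $q$-shuffle algebra $\mathbb V$, applying $\sigma$ to the first identity yields the third, applying $S$ yields the second, and applying $\sigma S$ yields the fourth; in the latter two cases the antiautomorphism reverses the order of the two factors in each $\star$-product, which is precisely what produces the stated forms. (One can also prove the second, third, fourth identities directly from~(\ref{eq:GGWW4}), (\ref{eq:GGWW2}), (\ref{eq:GGWW3}) by the same coefficient extraction, but the symmetry argument is cleaner.)

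The calculation is routine; the only point needing care is the bookkeeping with powers of $q$ — one must verify that the substitutions $t\mapsto q^{\pm1}t$ in the generating functions reproduce exactly the weights $q^{n-2k}$ occurring in Proposition~\ref{prop:GGWW}, and that the reversal of factors under $S$ and $\sigma S$ is matched correctly against the target expressions. I expect this index-chasing to be the main, though minor, obstacle.
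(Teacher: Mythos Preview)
Your proof is correct and follows the same approach as the paper, which simply states that the lemma is a routine consequence of Proposition~\ref{prop:GGWW}; you have carried out exactly that routine, extracting coefficients and matching them to (\ref{eq:GGWW1})--(\ref{eq:GGWW3}), and your use of $\sigma$, $S$, $\sigma S$ to transport the first identity to the others mirrors how the paper derives (\ref{eq:GGWW2})--(\ref{eq:GGWW4}) from (\ref{eq:GGWW1}).
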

\begin{proof} Routine consequence of Proposition
\ref{prop:GGWW}.
\end{proof}

\begin{remark} 
\label{rem:kmat}
\rm By Lemmas
\ref{lem:gfWG},
\ref{lem:gfGG} the following matrices
are inverses with respect to the $q$-shuffle product:
\begin{align*}
        \left(
         \begin{array}{cc}
               G(q^{-1}t) & qt W^-(q^{-1}t)    
	        \\
               W^+(q^{-1}t) & \tilde G(q^{-1}t) 
                  \end{array}
              \right),
\qquad \qquad
        \left(
         \begin{array}{cc}
               \tilde G(q t) & -qt W^-(qt)    
	        \\
               -W^+(qt) &  G(qt) 
                  \end{array}
              \right).
\end{align*}
\end{remark}

\noindent Our next goal is to solve 
for $G(t)$. To this end, we introduce some elements
$\lbrace D_n \rbrace_{n\in \mathbb N}$ in $\mathbb V$.
These elements will be defined recursively.

\begin{definition}
\label{def:VG}
\rm
Define $\lbrace D_n \rbrace_{n\in \mathbb N}$ in $\mathbb V$
such that
 $D_0 = 1$ and for $n\geq 1$,
\begin{align}
\label{eq:VGzero}
D_0 \star \tilde G_n+ D_1 \star \tilde G_{n-1} + \cdots + 
D_n \star \tilde G_0 = 0.
\end{align}
\end{definition}
\begin{example}
\label{ex:GvsV}
\rm We have
\begin{align*}
D_1 &= -\tilde G_1,
\\
D_2 &= \tilde G_1\star \tilde G_1 -\tilde G_2,
\\
D_3 &= 2 \tilde G_1\star \tilde G_2 - \tilde G_1 \star \tilde G_1 \star 
\tilde G_1 - \tilde G_3,
\\
D_4 &= 
\tilde G_1 \star 
\tilde G_1 \star 
\tilde G_1 \star 
\tilde G_1 
+ 2\tilde G_1 \star  \tilde G_3 + \tilde G_2 \star \tilde G_2-
3\tilde G_1 \star \tilde G_1\star 
\tilde G_2 -\tilde G_4
\end{align*}
and
\begin{align*}
\tilde G_1 &= -D_1,
\\
\tilde G_2 &= D_1 \star D_1 -D_2,
\\
\tilde G_3 &= 2 D_1 \star  D_2 - D_1 \star D_1 \star  D_1 - D_3,
\\
\tilde G_4 &= 
D_1 \star
D_1 \star
D_1 \star
D_1
+ 2 D_1 \star D_3 + D_2 \star D_2-3 D_1 \star D_1 \star 
D_2 -D_4.
\end{align*}
\end{example}

\begin{lemma}
\label{prop:VCpoly}
For $n\geq 1$ the following hold in the $q$-shuffle algebra $\mathbb V$.
\begin{enumerate}
\item[\rm (i)] $D_n$ is a homogeneous polynomial in 
$\tilde G_1, \tilde G_2,\ldots, \tilde G_n$ that has total degree $n$,
where we view each $\tilde G_i$ as having degree $i$.
In this polynomial the coefficient of
$\tilde G_n$ is $-1$.
\item[\rm (ii)] $\tilde G_n$ is a homogeneous polynomial in 
$D_1, D_2,\ldots, D_n$ that has total degree $n$, where we view
each $D_i$ as having degree $i$.
In this polynomial the coefficient of
$D_n$ is $-1$.
\end{enumerate}
\end{lemma}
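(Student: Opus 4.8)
The plan is to prove (i) and (ii) together by induction on $n$, using the defining relation (\ref{eq:VGzero}) and the fact that $\tilde G_0 = 1$. For $n=1$, the relation $D_0 \star \tilde G_1 + D_1 \star \tilde G_0 = 0$ together with $D_0 = 1$ and $\tilde G_0 = 1$ gives $D_1 = -\tilde G_1$, which establishes the base case for both statements simultaneously (the coefficient of $\tilde G_1$ in $D_1$ is $-1$, and the coefficient of $D_1$ in $\tilde G_1$ is $-1$).

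For the inductive step of (i), I would isolate $D_n$ in (\ref{eq:VGzero}): since $\tilde G_0 = 1$, the term $D_n \star \tilde G_0$ is just $D_n$, so
\begin{align*}
D_n = -D_0 \star \tilde G_n - D_1 \star \tilde G_{n-1} - \cdots - D_{n-1} \star \tilde G_1.
\end{align*}
By the induction hypothesis, each $D_i$ with $1 \leq i \leq n-1$ is a homogeneous polynomial of total degree $i$ in $\tilde G_1, \ldots, \tilde G_i$, and $D_0 = 1$. Hence each summand $D_i \star \tilde G_{n-i}$ is a homogeneous polynomial of total degree $n$ in $\tilde G_1, \ldots, \tilde G_n$, and so is $D_n$. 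The only summand involving $\tilde G_n$ is $-D_0 \star \tilde G_n = -\tilde G_n$ (the others involve only $\tilde G_1, \ldots, \tilde G_{n-1}$ by the induction hypothesis), so the coefficient of $\tilde G_n$ in $D_n$ is $-1$. Here I should note that the notion of ``homogeneous polynomial of total degree $n$'' makes sense because the $q$-shuffle algebra $\mathbb V$ is $\mathbb N^2$-graded, $\tilde G_i$ lies in $\mathbb V_{i,i}$, and the monomials in the $\tilde G_i$'s weighted so that $\tilde G_i$ has degree $i$ exactly match those of total $\mathbb N^2$-degree $(n,n)$.

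For (ii), the argument is the mirror image: I would invert the triangular relationship. Rewrite (\ref{eq:VGzero}) to isolate $\tilde G_n$ — the term $D_0 \star \tilde G_n = \tilde G_n$ — giving
\begin{align*}
\tilde G_n = -D_1 \star \tilde G_{n-1} - D_2 \star \tilde G_{n-2} - \cdots - D_n \star \tilde G_0 = -D_1 \star \tilde G_{n-1} - \cdots - D_{n-1} \star \tilde G_1 - D_n.
\end{align*}
By the induction hypothesis each $\tilde G_i$ with $1 \leq i \leq n-1$ is a homogeneous polynomial of total degree $i$ in $D_1, \ldots, D_i$, so each summand $D_j \star \tilde G_{i}$ (with $j + i = n$, $i < n$) is a homogeneous polynomial of total degree $n$ in $D_1, \ldots, D_{n-1}$, and the remaining term is $-D_n$; thus $\tilde G_n$ is a homogeneous polynomial of total degree $n$ in $D_1, \ldots, D_n$ with the coefficient of $D_n$ equal to $-1$. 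I do not anticipate a serious obstacle here: the recursion (\ref{eq:VGzero}) is genuinely unitriangular in both directions because $D_0 = \tilde G_0 = 1$, and the grading bookkeeping is routine. The one point requiring a little care is making precise that the $q$-shuffle product of homogeneous polynomials of degrees $i$ and $j$ is homogeneous of degree $i+j$, which follows from the $q$-shuffle product being compatible with the $\mathbb N^2$-grading (stated just below Lemma \ref{lem:Vsym} and again after equations (\ref{eq:uvcirc}), (\ref{eq:uvcirc2})).
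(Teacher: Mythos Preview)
Your proposal is correct and follows exactly the approach the paper takes: the paper's proof is the single line ``By Definition~\ref{def:VG} and induction on $n$,'' and you have simply spelled out that induction carefully. The unitriangularity coming from $D_0=\tilde G_0=1$ is precisely the point, and your argument captures it.
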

\begin{proof} By Definition 
\ref{def:VG} and induction on $n$.
\end{proof}

\begin{lemma}
\label{lem:samesub}
The following coincide:
\begin{enumerate}
\item[\rm (i)] the subalgebra of the $q$-shuffle algebra $\mathbb V$
generated by $\lbrace D_n \rbrace_{n =1}^\infty$;
\item[\rm (ii)] the subalgebra of the $q$-shuffle algebra $\mathbb V$
generated by $\lbrace \tilde  G_n \rbrace_{n =1}^\infty$.
\end{enumerate}
\end{lemma}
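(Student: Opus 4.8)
The plan is to show the two subalgebras coincide by proving mutual containment, using Lemma \ref{prop:VCpoly} as the engine. Write $\mathcal D$ for the subalgebra generated by $\{D_n\}_{n=1}^\infty$ and $\mathcal G$ for the subalgebra generated by $\{\tilde G_n\}_{n=1}^\infty$; both are subalgebras of the $q$-shuffle algebra $\mathbb V$.

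First I would show $\mathcal D \subseteq \mathcal G$. By Lemma \ref{prop:VCpoly}(i), each $D_n$ ($n\geq 1$) is a polynomial in $\tilde G_1,\dots,\tilde G_n$ under the $q$-shuffle product, hence $D_n \in \mathcal G$. Since $\mathcal G$ is a subalgebra containing every generator $D_n$ of $\mathcal D$, it contains all $q$-shuffle products of these generators, so $\mathcal D \subseteq \mathcal G$. The reverse containment $\mathcal G \subseteq \mathcal D$ is entirely symmetric: by Lemma \ref{prop:VCpoly}(ii) each $\tilde G_n$ is a polynomial in $D_1,\dots,D_n$ under the $q$-shuffle product, so $\tilde G_n \in \mathcal D$, and then $\mathcal G \subseteq \mathcal D$ since $\mathcal D$ is a subalgebra. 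Combining the two containments gives $\mathcal D = \mathcal G$, which is exactly the assertion.

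There is essentially no obstacle here: the content has already been front-loaded into Lemma \ref{prop:VCpoly}, and the only remaining point is the elementary observation that a subalgebra containing a set of elements contains every (noncommutative) polynomial in them. If one wants to be careful, note that Definition \ref{def:VG} defines $D_n$ only via the $q$-shuffle convolution identity (\ref{eq:VGzero}); solving that identity recursively for $D_n$ (with $D_0=1$ and the leading coefficient of $\tilde G_0=1$ invertible) is what yields the polynomial expression of Lemma \ref{prop:VCpoly}(i), and no further input is needed. So the proof is just: "By Lemma \ref{prop:VCpoly} and the fact that a subalgebra is closed under products."
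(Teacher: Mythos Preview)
Your proof is correct and matches the paper's approach exactly: the paper's proof is the single sentence ``Each subalgebra contains the other by Lemma \ref{prop:VCpoly},'' which is precisely the mutual-containment argument you spelled out.
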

\begin{proof} Each subalgebra 
contains the other by 
Lemma \ref{prop:VCpoly}.
\end{proof}

\noindent We emphasize a few points.

\begin{lemma} For $n \in \mathbb N$ we have $D_n \in U$.
\end{lemma}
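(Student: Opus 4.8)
The plan is to deduce $D_n \in U$ directly from the recursive definition of the $D_n$ (Definition \ref{def:VG}) together with the fact, already established in Theorem \ref{prop:AltinU}, that each alternating word lies in $U$. First I would recall that $U$ is by definition a subalgebra of the $q$-shuffle algebra $\mathbb V$ (Definition \ref{def:Usub}), hence closed under the $q$-shuffle product $\star$, contains $1$, and is closed under $\mathbb F$-linear combinations. In particular, by Theorem \ref{prop:AltinU}, $U$ contains each $\tilde G_n$ for $n \geq 1$ (and $\tilde G_0 = 1 \in U$), so $U$ contains the subalgebra generated by $\lbrace \tilde G_n \rbrace_{n=1}^\infty$.

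The main step is then a straightforward induction on $n$. The base case $D_0 = 1 \in U$ is immediate. For the inductive step, assume $D_0, D_1, \ldots, D_{n-1} \in U$. Equation (\ref{eq:VGzero}) from Definition \ref{def:VG} reads
\begin{align*}
D_0 \star \tilde G_n + D_1 \star \tilde G_{n-1} + \cdots + D_n \star \tilde G_0 = 0,
\end{align*}
and since $\tilde G_0 = 1$ this gives
\begin{align*}
D_n = -\sum_{k=0}^{n-1} D_k \star \tilde G_{n-k}.
\end{align*}
Each $D_k$ on the right lies in $U$ by the induction hypothesis, and each $\tilde G_{n-k}$ with $n-k \geq 1$ lies in $U$ by Theorem \ref{prop:AltinU}; since $U$ is closed under $\star$ and under linear combination, the right-hand side lies in $U$, hence $D_n \in U$. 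This completes the induction.

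Alternatively, and even more cleanly, one can avoid the induction entirely by invoking Lemma \ref{prop:VCpoly}(i): each $D_n$ is a polynomial (under $\star$) in $\tilde G_1, \ldots, \tilde G_n$, all of which lie in $U$ by Theorem \ref{prop:AltinU}, so $D_n$ lies in the subalgebra of $\mathbb V$ generated by these elements, which is contained in $U$. (Equivalently, $D_n$ lies in the subalgebra described in Lemma \ref{lem:samesub}(ii), which is contained in $U$.) There is essentially no obstacle here — the only thing to be careful about is that one uses the already-proved Theorem \ref{prop:AltinU} rather than trying to re-derive membership of the $\tilde G_n$ in $U$ from scratch; everything else is the formal closure properties of a subalgebra.
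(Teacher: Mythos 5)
Your proof is correct, and your ``alternative'' argument (via Lemma \ref{prop:VCpoly}/Lemma \ref{lem:samesub} together with Theorem \ref{prop:AltinU}) is exactly the paper's own proof; your main inductive argument just unwinds that lemma directly from Definition \ref{def:VG}, so the two are essentially the same.
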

\begin{proof} By 
Theorem
\ref{prop:AltinU}
and
Lemma
\ref{lem:samesub}.
\end{proof}

\begin{lemma} 
\label{lem:VGprop1}
For $i,j\in \mathbb N$ the following holds in
the $q$-shuffle algebra $\mathbb V$:
\begin{align*}
\lbrack D_i, D_j\rbrack = 0, \qquad \qquad
\lbrack D_i, \tilde G_j\rbrack = 0.
\end{align*}
\end{lemma}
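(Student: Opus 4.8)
The plan is to deduce both commutation relations from a single observation: all the relevant elements lie in one commutative subalgebra of the $q$-shuffle algebra $\mathbb V$.

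First I would recall from Proposition~\ref{prop:rel2}, specifically \eqref{eq:3p10vv}, that $\lbrack \tilde G_{k+1}, \tilde G_{\ell+1}\rbrack = 0$ for all $k,\ell \in \mathbb N$; together with the fact that $\tilde G_0 = 1$ is the multiplicative identity (hence commutes with everything), this shows that the elements $\lbrace \tilde G_n\rbrace_{n\in\mathbb N}$ mutually commute in the $q$-shuffle algebra. Therefore the subalgebra $\mathcal C$ of the $q$-shuffle algebra $\mathbb V$ generated by $\lbrace \tilde G_n\rbrace_{n=1}^\infty$ is commutative, since any subalgebra generated by a set of pairwise commuting elements is commutative.

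Next I would invoke Lemma~\ref{lem:samesub}, which states that $\mathcal C$ coincides with the subalgebra of the $q$-shuffle algebra $\mathbb V$ generated by $\lbrace D_n\rbrace_{n=1}^\infty$. In particular $D_i\in\mathcal C$ and $D_j\in\mathcal C$ for all $i,j\in\mathbb N$, while of course $\tilde G_j\in\mathcal C$ as well. Since $\mathcal C$ is commutative, it follows at once that $\lbrack D_i,D_j\rbrack = 0$ and $\lbrack D_i,\tilde G_j\rbrack = 0$, which is the assertion of the lemma.

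I do not anticipate any genuine obstacle here. The only point that deserves a moment's care is the passage from ``the $\tilde G_n$ mutually commute'' to ``$D_i$ commutes with $\tilde G_j$ and with $D_j$'': this relies on Lemma~\ref{prop:VCpoly}, which expresses each $D_n$ as a $q$-shuffle polynomial in $\tilde G_1,\dots,\tilde G_n$, so that $D_n$ indeed lies in $\mathcal C$; but this is precisely what is already encoded in Lemma~\ref{lem:samesub}. Thus the proof reduces to one line once \eqref{eq:3p10vv} and Lemma~\ref{lem:samesub} are in hand.
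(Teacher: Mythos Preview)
Your proof is correct and is essentially the same as the paper's: both argue that the $\tilde G_j$ mutually commute (from \eqref{eq:3p10vv}), that by Lemma~\ref{lem:samesub} the $D_i$ lie in the commutative subalgebra they generate, and hence all the required brackets vanish. You have simply spelled out in more detail what the paper compresses into a one-line citation of Lemma~\ref{lem:samesub} and the mutual commutativity of the $\tilde G_j$.
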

\begin{proof} By 
Lemma \ref{lem:samesub}
and since the $\lbrace \tilde G_j\rbrace_{j\in \mathbb N}$
mutually commute.
\end{proof}

\begin{definition}\rm We define a generating function
in the indeterminate $t$:
\begin{align*}
D(t) = \sum_{n\in \mathbb N} t^n D_n.
\end{align*}
\end{definition}

\begin{lemma} 
\label{lem:VGinv}
We have
\begin{align*}
D(t) \star \tilde G(t) = 1 = \tilde G(t) \star  D(t).
\end{align*}
\end{lemma}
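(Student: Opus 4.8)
The plan is to show that $D(t)$ is the two-sided inverse of $\tilde G(t)$ with respect to the $q$-shuffle product by reading the defining recursion of Definition \ref{def:VG} as a statement about the coefficient of each power of $t$ in the product $D(t) \star \tilde G(t)$. First I would expand the product of generating functions: since the $q$-shuffle product is $\mathbb F$-bilinear and $t$ is a central indeterminate commuting with everything, we have
\begin{align*}
D(t) \star \tilde G(t) = \sum_{n \in \mathbb N} t^n \sum_{k=0}^n D_k \star \tilde G_{n-k}.
\end{align*}
For $n = 0$ the inner sum is $D_0 \star \tilde G_0 = 1 \star 1 = 1$, using $D_0 = \tilde G_0 = 1$ and the fact that $1$ is the identity for $\star$. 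For $n \geq 1$ the inner sum is exactly the left-hand side of \eqref{eq:VGzero}, hence equals $0$. Therefore $D(t) \star \tilde G(t) = 1$, giving one of the two claimed identities.

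For the reverse identity $\tilde G(t) \star D(t) = 1$, the cleanest route is to invoke commutativity: by Lemma \ref{lem:VGprop1} we have $\lbrack D_i, \tilde G_j \rbrack = 0$ for all $i,j \in \mathbb N$, so $D_k \star \tilde G_{n-k} = \tilde G_{n-k} \star D_k$ for every $k$ and $n$. Reindexing the inner sum (replace $k$ by $n-k$) then gives
\begin{align*}
\sum_{k=0}^n \tilde G_k \star D_{n-k} = \sum_{k=0}^n \tilde G_{n-k} \star D_k = \sum_{k=0}^n D_k \star \tilde G_{n-k},
\end{align*}
which is $1$ for $n=0$ and $0$ for $n \geq 1$ as above; hence $\tilde G(t) \star D(t) = D(t) \star \tilde G(t) = 1$. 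Alternatively, one could note that $D_0 = 1$ so $\tilde G(t)$ has invertible constant term in the ring of formal power series over the $q$-shuffle algebra, and a one-sided inverse of such an element is automatically two-sided; but the commutativity argument is more direct given what is already available.

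I do not anticipate a real obstacle here: the statement is essentially a repackaging of Definition \ref{def:VG} in generating-function language, and the only subtlety worth stating explicitly is that $t$ commutes with the $q$-shuffle product so that coefficients of $t^n$ can be compared term by term. The one genuine choice is how to get the second equality, and the commutativity of the $D_i$ with the $\tilde G_j$ from Lemma \ref{lem:VGprop1} makes it immediate.
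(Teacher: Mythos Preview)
Your proof is correct and follows exactly the approach the paper indicates: the paper's proof reads in full ``Use Definition \ref{def:VG} and Lemma \ref{lem:VGprop1},'' and you have faithfully unpacked what that means---Definition \ref{def:VG} gives $D(t)\star\tilde G(t)=1$ coefficient by coefficient, and Lemma \ref{lem:VGprop1} supplies the commutativity needed for the other direction.
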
 
\begin{proof} Use Definition
\ref{def:VG} and Lemma
\ref{lem:VGprop1}.
\end{proof}

\begin{lemma}
\label{lem:VW}
We have
\begin{align*}
&
W^-(q^{-1}t)\star D(q^{-1}t) = 
D(qt)\star W^-(qt), 
\\
&
D(q^{-1}t)\star W^+(q^{-1}t) = 
W^+(qt)\star D(qt).
\end{align*}
\end{lemma}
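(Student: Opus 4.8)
\textbf{Proof proposal for Lemma \ref{lem:VW}.}
The plan is to deduce these two identities from the pair of matrix inverse relations recorded in Remark \ref{rem:kmat}, together with the definition of $D(t)$ as the $q$-shuffle inverse of $\tilde G(t)$ given in Lemma \ref{lem:VGinv}. The key observation is that both sides of each asserted equation can be obtained by isolating the appropriate entry from the matrix identity
\[
\left(
\begin{array}{cc}
G(q^{-1}t) & qt W^-(q^{-1}t) \\
W^+(q^{-1}t) & \tilde G(q^{-1}t)
\end{array}
\right)
\star
\left(
\begin{array}{cc}
\tilde G(qt) & -qt W^-(qt) \\
-W^+(qt) & G(qt)
\end{array}
\right)
= I,
\]
and from the reversed product, which also equals $I$ since the two matrices are two-sided $q$-shuffle inverses of each other (Remark \ref{rem:kmat}).

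First I would extract from the $(2,2)$-entry of the first product the relation $W^+(q^{-1}t)\star(-qtW^-(qt)) + \tilde G(q^{-1}t)\star G(qt) = 1$, i.e. $\tilde G(q^{-1}t)\star G(qt) = 1 + qt\,W^+(q^{-1}t)\star W^-(qt)$, which is just the third identity of Lemma \ref{lem:gfGG}; and from the reversed product's $(2,2)$-entry the relation $G(qt)\star\tilde G(q^{-1}t) = 1 + qt\,W^+(qt)\star W^-(q^{-1}t)$, the second identity of Lemma \ref{lem:gfGG}. To bring $D(t)$ into play, I would left-multiply (in the $q$-shuffle sense) a suitable entry identity by $D(q^{-1}t)$ and right-multiply by $D(qt)$, using $D\star\tilde G = 1 = \tilde G\star D$ from Lemma \ref{lem:VGinv} to cancel the $\tilde G$ factors. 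For instance, starting from $W^-(q^{-1}t)\star G(qt) = G(q^{-1}t)\star W^-(qt)$ (the first identity of Lemma \ref{lem:gfWG}) is not yet what we want; instead the right source is the off-diagonal entry of the matrix product. From the $(1,2)$-entry of the first product, $G(q^{-1}t)\star(-qtW^-(qt)) + qtW^-(q^{-1}t)\star G(qt) = 0$, which recovers Lemma \ref{lem:gfWG} again, so the matrix bookkeeping by itself only reproduces known relations; the genuinely new input is the substitution $\tilde G \mapsto D^{-1}$.

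Concretely, for the first claimed identity I would take the relation $\tilde G(q^{-1}t)\star G(qt)\star\tilde G(qt)$ and rewrite it two ways: using associativity and $G(qt)\star\tilde G(qt)=\tilde G(qt)\star G(qt)$ (which follows since all the $G_n,\tilde G_n$ mutually commute, e.g. via \eqref{eq:3p10vv} and \eqref{eq:3p11v}), together with Lemma \ref{lem:gfGG}, to express $G(qt)$ through $\tilde G$'s and $W$'s; then apply $D$ on both sides and invoke Lemma \ref{lem:VGprop1} so that $D$ commutes past every $\tilde G$ factor. Carrying this through, the $\tilde G$ factors telescope against $D$ via Lemma \ref{lem:VGinv}, leaving exactly $W^-(q^{-1}t)\star D(q^{-1}t) = D(qt)\star W^-(qt)$. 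The second identity follows the same way starting from the $W^+$ relations in Lemmas \ref{lem:gfWG}, \ref{lem:gfGG}, or more economically by applying the antiautomorphism $S$ to the first identity, since $S$ fixes each $W_{n+1}$ only up to the swap with $W_{-n}$ — more precisely one applies $\sigma$, which by Lemma \ref{lem:sigSact} sends $W^-\leftrightarrow W^+$ and $G\leftrightarrow\tilde G$, hence $D(t)$ (built from the $\tilde G_n$) to the analogous series built from the $G_n$; so one should instead combine $S$ and a direct check that $\sigma$ sends $D(t)$ to the $G$-analogue. The main obstacle is precisely this last bookkeeping point: verifying that $D(t)$ behaves correctly under the symmetries, or equivalently producing the second identity by a direct computation parallel to the first rather than by an incautious appeal to $\sigma$ or $S$. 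Once the commutation of $D$ with all $\tilde G_n$ (Lemma \ref{lem:VGprop1}) is in hand, the rest is a short manipulation of generating functions and I do not expect any further difficulty.
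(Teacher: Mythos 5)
Your proposal misses the short, correct route and the alternative you sketch does not actually close. The two identities follow immediately from the two equations of Lemma \ref{lem:gfWG} that involve $\tilde G$, namely $\tilde G(q^{-1}t)\star W^+(qt)=W^+(q^{-1}t)\star\tilde G(qt)$ and $\tilde G(qt)\star W^-(q^{-1}t)=W^-(qt)\star\tilde G(q^{-1}t)$: multiply the second on the left by $D(qt)$ and on the right by $D(q^{-1}t)$, and the first on the left by $D(q^{-1}t)$ and on the right by $D(qt)$, then cancel via $D(t)\star\tilde G(t)=1=\tilde G(t)\star D(t)$ (Lemma \ref{lem:VGinv}). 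This is the paper's proof. You examined only the first ($G$-type) equation of Lemma \ref{lem:gfWG}, declared it ``not yet what we want,'' and never returned to the $\tilde G$-type equations, which are exactly what is wanted.

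The replacement route you propose, through Lemma \ref{lem:gfGG} and the matrix of Remark \ref{rem:kmat}, has a genuine gap: every identity in Lemma \ref{lem:gfGG} couples $W^+$ with $W^-$. Sandwiching, say, $\tilde G(q^{-1}t)\star G(qt)=1+qt\,W^+(q^{-1}t)\star W^-(qt)$ and $G(qt)\star\tilde G(q^{-1}t)=1+qt\,W^+(qt)\star W^-(q^{-1}t)$ with $D(q^{-1}t)$ yields only the combined relation
\begin{align*}
D(q^{-1}t)\star W^+(q^{-1}t)\star W^-(qt)=W^+(qt)\star W^-(q^{-1}t)\star D(q^{-1}t),
\end{align*}
which is the ``product'' of the two desired identities; you cannot split it into the two separate statements because $W^+$ and $W^-$ are not invertible in the $q$-shuffle algebra, so no cancellation is available. (It is also backwards relative to the paper's logic: Lemma \ref{lem:VW} is an input to Proposition \ref{prop:solveG}, which is where Lemma \ref{lem:gfGG} gets combined with Lemma \ref{lem:VGinv}.) Your concluding worry about how $D(t)$ transforms under $\sigma$ and $S$ is a symptom of the wrong starting point; once you use the two $\tilde G$-equations of Lemma \ref{lem:gfWG}, both identities come out directly and no appeal to the symmetries is needed.
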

\begin{proof} 
By the two equations in Lemma
\ref{lem:gfWG} that involve $\tilde G$, together with
Lemma
\ref{lem:VGinv}.
\end{proof}

\begin{proposition}
\label{prop:solveG}
We have
\begin{align}
G(t) &= D(q^2t) + q^2t W^-(t) \star D(t) \star W^+(t),
\label{eq:GV1}
\\
G(t) &= D(q^{-2}t) + t W^+(t) \star D(t) \star W^-(t).
\label{eq:GV2}
\end{align}
\end{proposition}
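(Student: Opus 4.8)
The plan is to extract $G(t)$ from the matrix inversion relation in Remark~\ref{rem:kmat} by substituting the factorization of $\tilde G(t)$ through $D(t)$ from Lemma~\ref{lem:VGinv}. First I would recall that Lemmas~\ref{lem:gfWG} and~\ref{lem:gfGG} assert that the two matrices in Remark~\ref{rem:kmat} are mutual inverses with respect to $\star$; in particular, multiplying them in one order and reading off the $(2,2)$-entry gives a formula in which $G(qt)$ appears together with products $W^+(q^{-1}t)\star(\cdots)$ and $\tilde G(qt)$. The key substitution is to replace $t$ by $q^{-1}t$ where convenient so that the arguments line up, and then to use $\tilde G(q^{-1}t)\star D(q^{-1}t)=1=D(q^{-1}t)\star \tilde G(q^{-1}t)$ from Lemma~\ref{lem:VGinv} to eliminate $\tilde G$ in favor of $D$.

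Concretely, from Lemma~\ref{lem:gfGG} we have $\tilde G(q^{-1}t)\star G(qt) - qt\, W^+(q^{-1}t)\star W^-(qt) = 1$. Left-multiplying by $D(q^{-1}t)$ and using $D(q^{-1}t)\star \tilde G(q^{-1}t)=1$ yields
\begin{align*}
G(qt) = D(q^{-1}t) + qt\, D(q^{-1}t)\star W^+(q^{-1}t)\star W^-(qt).
\end{align*}
Now I would push the $D$ through the $W$-factor using Lemma~\ref{lem:VW}, which gives $D(q^{-1}t)\star W^+(q^{-1}t) = W^+(qt)\star D(qt)$, so
\begin{align*}
G(qt) = D(q^{-1}t) + qt\, W^+(qt)\star D(qt)\star W^-(qt).
\end{align*}
Replacing $t$ by $q^{-1}t$ throughout converts this into $G(t) = D(q^{-2}t) + t\, W^+(t)\star D(t)\star W^-(t)$, which is exactly~(\ref{eq:GV2}).

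For~(\ref{eq:GV1}) I would run the parallel computation starting from a different one of the four identities in Lemma~\ref{lem:gfGG} — namely one in which $G$ sits on the left of $\tilde G$, e.g. $G(q^{-1}t)\star \tilde G(qt) - qt\, W^-(q^{-1}t)\star W^+(qt)=1$ — right-multiply by $D(qt)$, use $\tilde G(qt)\star D(qt)=1$, and then commute $D$ past $W^+$ via the second equation of Lemma~\ref{lem:VW}; a final rescaling $t\mapsto qt$ produces~(\ref{eq:GV1}). Alternatively one can deduce the second formula from the first by applying the antiautomorphism $S$, using that $S$ fixes each alternating word up to the $G\leftrightarrow\tilde G$ swap (Lemma~\ref{lem:sigSact}) and noting $D_n$ is a polynomial in the $\tilde G_i$; but the direct derivation is cleaner since it avoids tracking how $S$ acts on $D(t)$. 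The only mildly delicate point — and the step I would be most careful about — is bookkeeping the powers of $q$ in the arguments so that Lemmas~\ref{lem:VGinv} and~\ref{lem:VW} apply verbatim; once the arguments match, everything is a formal manipulation of the inverse-matrix relation, and the commutativity facts of Lemma~\ref{lem:VGprop1} guarantee the rearrangements are legitimate.
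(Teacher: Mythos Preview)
Your proof is correct and follows essentially the same approach as the paper: take an identity from Lemma~\ref{lem:gfGG}, multiply by $D$ on the appropriate side to eliminate $\tilde G$ via Lemma~\ref{lem:VGinv}, push $D$ through a $W^\pm$ factor using Lemma~\ref{lem:VW}, and then rescale $t\mapsto q^{\pm 1}t$. The reference to the matrix inversion in Remark~\ref{rem:kmat} is decorative rather than essential, but otherwise your argument and the paper's coincide.
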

\begin{proof} 
In the equations of Lemma
\ref{lem:gfGG}, eliminate the $\tilde G$ term using
Lemma
\ref{lem:VGinv}, 
evaluate the result using 
Lemma \ref{lem:VW},
and then
make a change of variables $t\mapsto q^{\pm 1}t$ as needed.
\end{proof}

\begin{theorem}
\label{prop:Gexpl} 
For $n\in \mathbb N$ we have
\begin{align}
\label{eq:Gn1}
G_n 
&= q^{2n}D_n +
q^2 \sum_{\stackrel{i+j+k+1=n}{i,j,k\geq 0}}
W_{-i} \star D_{j} \star W_{k+1},
\\
G_n &= q^{-2n}D_n +
 \sum_{\stackrel{i+j+k+1=n}{i,j,k\geq 0}}
W_{k+1} \star D_{j} \star W_{-i}.
\label{eq:Gn2}
\end{align}
\end{theorem}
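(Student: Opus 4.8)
The plan is to extract the explicit formulas (\ref{eq:Gn1}), (\ref{eq:Gn2}) directly from the generating-function identities (\ref{eq:GV1}), (\ref{eq:GV2}) of Proposition \ref{prop:solveG}, by comparing coefficients of $t^n$. This is essentially a bookkeeping exercise, and the only care needed concerns the scalar substitutions $t\mapsto q^{\pm 2}t$.

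First I would recall the relevant power-series expansions. From $D(t)=\sum_{m\in\mathbb N}t^m D_m$ we get $D(q^2 t)=\sum_{m\in\mathbb N}q^{2m}t^m D_m$, so the coefficient of $t^n$ in $D(q^2 t)$ is $q^{2n}D_n$. This already accounts for the first term on the right of (\ref{eq:Gn1}). For the second term, I would expand the triple $q$-shuffle product $W^-(t)\star D(t)\star W^+(t)$, where $W^-(t)=\sum_{i\ge 0}t^i W_{-i}$, $D(t)=\sum_{j\ge 0}t^j D_j$, and $W^+(t)=\sum_{k\ge 0}t^k W_{k+1}$. Since multiplication of power series adds exponents, the coefficient of $t^m$ in this product is $\sum_{i+j+k=m}W_{-i}\star D_j\star W_{k+1}$. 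Then $q^2 t\,W^-(t)\star D(t)\star W^+(t)$ has coefficient of $t^n$ equal to $q^2\sum_{i+j+k=n-1}W_{-i}\star D_j\star W_{k+1}$, i.e. $q^2\sum_{i+j+k+1=n,\ i,j,k\ge 0}W_{-i}\star D_j\star W_{k+1}$. Adding the two contributions and invoking (\ref{eq:GV1}) (whose left-hand side $G(t)$ has coefficient of $t^n$ equal to $G_n$ by definition of $G(t)$) yields (\ref{eq:Gn1}) exactly. For (\ref{eq:Gn2}) the argument is the mirror image: from (\ref{eq:GV2}), $D(q^{-2}t)$ contributes $q^{-2n}D_n$, and $t\,W^+(t)\star D(t)\star W^-(t)$ contributes $\sum_{i+j+k+1=n,\ i,j,k\ge 0}W_{k+1}\star D_j\star W_{-i}$ in the coefficient of $t^n$; note there is no factor $q^2$ in front here, matching the statement.

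The one point requiring a word of justification is that comparing coefficients of $t^n$ is legitimate: the generating functions $G(t)$, $D(t)$, $W^\pm(t)$ are formal power series in $t$ with coefficients in $\mathbb V$, and for each fixed $n$ only finitely many terms of each series contribute to the coefficient of $t^n$, so the $q$-shuffle products appearing are finite sums in $\mathbb V$ and the identity of coefficients of $t^n$ in Proposition \ref{prop:solveG} is a genuine identity in $\mathbb V$. I do not anticipate a real obstacle here; the main thing to get right is the index bookkeeping and the placement of the scalar $q^2$ (present in (\ref{eq:Gn1}) because of the explicit $q^2t$ in (\ref{eq:GV1}), absent in (\ref{eq:Gn2}) because (\ref{eq:GV2}) has a bare $t$), together with the substitution $t\mapsto q^{\pm 2}t$ affecting only the $D_n$ terms and not the triple-product terms (the $q^2t$ and $t$ prefactors already incorporate the needed power of $q$). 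Thus the proof reduces to: expand, match coefficients of $t^n$, done.
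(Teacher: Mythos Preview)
Your proposal is correct and takes exactly the same approach as the paper: the paper's proof simply says to compare the coefficient of $t^n$ on either side of (\ref{eq:GV1}) and (\ref{eq:GV2}). Your writeup spells out the coefficient extraction in more detail than the paper does, but the argument is identical.
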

\begin{proof} For the equations
(\ref{eq:GV1}) and
(\ref{eq:GV2}),
compare the
coeffient of $t^n$ on either side.
\end{proof}

\begin{example}
\label{ex:Gelim}
\rm
For $1 \leq n \leq 4$ the equations
(\ref{eq:Gn1}), 
(\ref{eq:Gn2}) are given below:
\begin{align*}
 G_1 &= q^2D_1 + 
 q^2 W_0\star W_1,
\\
G_2 &= q^4D_2+
q^2 W_0 \star W_2 
+
q^2 W_{-1} \star W_1 
+
q^2 W_0 \star D_1 \star  W_1,
\\
 G_3 &= q^6D_3+
q^2 W_0 \star W_3+
q^2 W_{-1}\star W_2+
q^2 W_{-2}\star W_1+
q^2 W_0 \star D_1 \star W_2
\\
& \quad + q^2 W_{-1} \star D_1 \star W_1
+ q^2 W_0 \star D_2 \star W_1,
\\
G_4 &= q^8 D_4+
q^2 W_{0} \star W_4 +
q^2 W_{-1} \star W_3 +
q^2 W_{-2} \star W_2 +
q^2 W_{-3} \star W_1 
\\& \quad+
q^2 W_{0} \star D_1 \star W_3 +
q^2 W_{-1} \star D_1 \star W_2 +
q^2 W_{-2} \star D_1 \star W_1 +
q^2 W_{0} \star D_2 \star W_2
\\& \quad+
q^2 W_{-1} \star D_2 \star W_1+
 q^2 W_{0} \star D_3 \star W_1,
\end{align*}
\noindent and
\begin{align*}
G_1 &= q^{-2}D_1 + 
  W_1\star W_0,
\\
G_2 &= q^{-4}D_2+
 W_1 \star W_{-1} 
+
 W_{2} \star W_0 
+
 W_1 \star D_1 \star  W_0,
\\
 G_3 &= q^{-6} D_3+
 W_1 \star W_{-2}+
 W_2\star W_{-1}+
 W_3\star W_0+
 W_1 \star D_1 \star W{-1}
\\
& \quad +  W_{2} \star D_1 \star W_0
+  W_1 \star D_2 \star W_0,
\\
G_4 &= q^{-8} D_4+
 W_{1} \star W_{-3} +
 W_{2} \star W_{-2} +
 W_{3} \star W_{-1}+
 W_{4} \star W_0 
\\& \quad+
 W_{1} \star D_1 \star W_{-2} +
 W_{2} \star D_1 \star W_{-1} +
 W_{3} \star D_1 \star W_{0} +
 W_{1} \star D_2 \star W_{-1}
\\
& \quad+
 W_{2} \star D_2 \star W_0+
 W_{1} \star D_3 \star W_0.
\end{align*}
\noindent Note that $D_1,D_2,D_3,D_4$
are given in Example
\ref{ex:GvsV}.
\end{example}

\section{The alternating PBW basis for $U$}

\noindent In this section we obtain some PBW bases for
$U$, including the alternating PBW basis.

\begin{theorem}
\label{thm:secondkind}
A PBW basis for $U$ is obtained by the elements
\begin{align}
\label{eq:omega}
\lbrace W_{-i} \rbrace_{i \in \mathbb N}, \qquad 
\lbrace \tilde G_{j+1} \rbrace_{j\in \mathbb N}, \qquad  
\lbrace W_{k+1} \rbrace_{k\in \mathbb N}
\end{align}
in any linear order $<$ that satisfies one of {\rm (i)--(vi)} below:
\begin{enumerate}
\item[\rm (i)]  $W_{-i} <  \tilde G_{j+1} < W_{k+1}$ for $i,j,k\in \mathbb N$;
\item[\rm (ii)]  $W_{k+1} < \tilde G_{j+1} < W_{-i}$ for $i,j,k\in \mathbb N$;
\item[\rm (iii)]  $W_{k+1} < W_{-i} < \tilde G_{j+1}$ for $i,j,k\in \mathbb N$;
\item[\rm (iv)]  $W_{-i} < W_{k+1} < \tilde G_{j+1}$ for $i,j,k\in \mathbb N$;
\item[\rm (v)]  $\tilde G_{j+1} < W_{k+1} < W_{-i}$ for $i,j,k\in \mathbb N$;
\item[\rm (vi)]  $\tilde G_{j+1} < W_{-i} < W_{k+1}$ for $i,j,k\in \mathbb N$.
\end{enumerate}
\end{theorem}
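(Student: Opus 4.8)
The plan is to use the dimension-counting machinery of Section 3, specifically Proposition \ref{prop:useful}, which reduces the PBW-basis claim to a spanning statement for a feasible subset. First I would observe that the set $\Omega$ consisting of the elements in (\ref{eq:omega}) is feasible: each element is homogeneous for the $\mathbb{N}^2$-grading by the table in Definition \ref{def:WWGG}, and the assignment $W_{-i}\mapsto(i+1,i)$, $W_{k+1}\mapsto(k,k+1)$, $\tilde G_{j+1}\mapsto(j+1,j+1)$ is a bijection $\Omega\to\mathcal R$. By Proposition \ref{prop:useful}, for any linear order $<$ on $\Omega$ it suffices to show that the ordered monomials (\ref{eq:set}) span $U$ (equivalently, span each homogeneous component, equivalently are linearly independent there).

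The natural route to the spanning statement is to compare $\Omega$ with the Damiani PBW basis of Proposition \ref{prop:PBWbasis}, transported to $U$ via $\natural$. The elements $\lbrace E_{n\delta+\alpha_0}\rbrace$, $\lbrace E_{n\delta+\alpha_1}\rbrace$ map (up to nonzero scalars) to $xC_n$, $C_ny$, which are not the $W_{-n}$, $W_{n+1}$, so a direct identification is not available; instead I would argue that $U$ is spanned by products of the alternating words together with the $D_n$, using the commutator relations. Concretely: Theorem \ref{prop:AltinU} gives that every alternating word lies in $U$, and since $x=W_0$, $y=W_1$ generate $U$, the alternating words together generate $U$ as an algebra. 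Using Proposition \ref{prop:rel1} and Proposition \ref{prop:rel2}, every $G_{k+1}$ can be rewritten (via (\ref{eq:3p2vv})) in terms of $W_{-k-1}$ and products $W_0\star\tilde G_{k+1}$, $\tilde G_{k+1}\star W_0$; more usefully, Theorem \ref{prop:Gexpl} expresses each $G_n$ as a polynomial in the $W_{-i}$, $W_{j+1}$, and $D_j$, and by Lemma \ref{prop:VCpoly}(ii) each $D_j$ is a polynomial in the $\tilde G_i$. Hence the subalgebra generated by $\lbrace W_{-i}\rbrace\cup\lbrace\tilde G_{j+1}\rbrace\cup\lbrace W_{k+1}\rbrace$ contains all four families of alternating words, so it equals $U$: the ordered monomials (\ref{eq:set}) span $U$ for at least one — hence, by the argument below, every — admissible order.

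The remaining point is to pass from "$\Omega$ generates $U$" to "ordered monomials in $\Omega$ span $U$" for each of the six orders (i)--(vi). This is where the commutator relations of Sections 6--7 do the work: to straighten an arbitrary product $a_{i_1}\star\cdots\star a_{i_m}$ of generators into the prescribed order, one repeatedly swaps an adjacent out-of-order pair, and each such swap introduces, by Propositions \ref{prop:rel1}, \ref{prop:rel2}, \ref{prop:rel3} (and Lemma \ref{lem:transGW}, Lemma \ref{lem:ggww}), a correction term that is a scalar multiple of a shorter product of alternating words, hence — by induction on length using the generation fact just established — expressible in ordered form. The key structural input that makes the induction close is that each commutator $[a,b]$ of two generators from $\Omega$ lies in the span of strictly shorter products of alternating words (degree drops, or the term is a single $W$ of the right total degree), together with the fact that the three sub-families $\lbrace W_{-i}\rbrace$, $\lbrace\tilde G_{j+1}\rbrace$, $\lbrace W_{k+1}\rbrace$ are each internally commuting by (\ref{eq:3p4vv}), (\ref{eq:3p10vv}), so no straightening is needed within a block. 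Finally, once spanning holds for all $(i,j)$, Lemma \ref{lem:pre} upgrades it to "basis", and Proposition \ref{prop:useful} gives the PBW property; the choice among (i)--(vi) is immaterial because the spanning argument only used that each cross-block commutator reduces to shorter products, which is symmetric in the three blocks.

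I expect the main obstacle to be the bookkeeping in the straightening induction: showing cleanly that every commutator $[a_{i_r}, a_{i_{r+1}}]$ of out-of-order generators expands into a combination of products of alternating words all of strictly smaller length in the generators of $\Omega$ (after re-expressing any $G$'s via Theorem \ref{prop:Gexpl}), so that the induction on total length is well-founded. The relations in Proposition \ref{prop:rel3} (e.g. (\ref{eq:gg3})--(\ref{eq:gg6})) are the delicate ones here, since they mix $q$-commutators and ordinary commutators and relate $G$, $\tilde G$ to quadratic expressions in the $W$'s; careful use of the recursion in Lemma \ref{lem:recgen} should keep this under control.
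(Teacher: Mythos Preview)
Your high-level strategy is exactly the one the paper uses: show that the set $\Omega$ in (\ref{eq:omega}) is feasible, then invoke Proposition~\ref{prop:useful} by verifying that the ordered monomials span $U$, with Theorem~\ref{prop:Gexpl} used to eliminate the $G$'s. So the architecture is right.

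The gap is in your straightening mechanism. You assert that the commutator of two generators from $\Omega$ lies in the span of \emph{strictly shorter} products of alternating words, and you plan to induct on total length. That claim is false. For instance, by Lemma~\ref{lem:rr2} (the reformulation of Lemma~\ref{lem:ggww}) one has, for $i\leq j$,
\[
W_{-i}\star W_{j+1}-W_{j+1}\star W_{-i}
=(1-q^{-2})\sum_{\ell=0}^{i}\bigl(\tilde G_{j+1+\ell}\star G_{i-\ell}-\tilde G_{i-\ell}\star G_{j+1+\ell}\bigr),
\]
which is a combination of two-factor products, not shorter ones, and moreover involves the $G$'s, which are \emph{not} in $\Omega$. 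If you now replace each $G_n$ using Theorem~\ref{prop:Gexpl}, the terms $W_{-a}\star D_b\star W_{c+1}$ are three-factor products in $\Omega$ (after expanding $D_b$ in the $\tilde G$'s), so the length has gone \emph{up}. Your induction on the number of generators in a product is therefore not well-founded; the problem you flag at the end (``the main obstacle'') is not merely bookkeeping but a genuine failure of the proposed well-ordering.

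The paper circumvents this as follows. For order~(i) it first \emph{enlarges} $\Omega$ to the full set $\mathbb A$ of alternating words and extends $<$ by
\(
W_{-i}<G_{\ell+1}<\tilde G_{j+1}<W_{k+1}.
\)
With this extended order, Lemmas~\ref{lem:transGW} and~\ref{lem:ggww} rewrite any single out-of-order adjacent pair $a_{s-1}\star a_s$ directly as a linear combination of \emph{in-order} pairs $a'_{s-1}\star a'_s$ with $a'_{s-1},a'_s\in\mathbb A$; no length-drop is needed or claimed. This gives spanning by ordered $\mathbb A$-monomials. Only then are the $G$'s removed, via (\ref{eq:Gn1}), by induction on the number $\zeta$ of $G$-factors in an ordered $\mathbb A$-monomial: one eliminates the rightmost $G$ and re-straightens using Lemma~\ref{lem:transGW}, and the point is that (\ref{eq:Gn1}) produces only $W_{-}$, $D$ (hence $\tilde G$), and $W_{+}$ factors already in the correct relative position, so $\zeta$ strictly decreases.

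Finally, the six orders are \emph{not} handled by a symmetry argument of the kind you sketch. The paper treats (i) directly as above, treats (ii) by the analogous argument with the extended order $W_{k+1}<G_{\ell+1}<\tilde G_{j+1}<W_{-i}$ and formula (\ref{eq:Gn2}) in place of (\ref{eq:Gn1}), and then deduces (iii)--(vi) from (i) or (ii) by invoking the span equalities in Proposition~\ref{prop:table1} (cases~3 and~4), which allow the $\tilde G$-block to be commuted past a $W$-block en bloc. Your uniform ``symmetric in the three blocks'' claim does not hold, because the elimination of the $G$'s via (\ref{eq:Gn1}) versus (\ref{eq:Gn2}) is sensitive to whether the $W_{-}$'s sit to the left or to the right.
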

\begin{proof} (i)
We invoke Proposition
\ref{prop:useful}.
For notational convenience we identify
the algebras $U^+_q$ and $U$, via the isomorphism $\natural$
from below 
(\ref{eq:qsc2}).
Let $\Omega$ 
 denote the set of alternating words listed in
 (\ref{eq:omega}).
We have $\Omega \subseteq U$ by Theorem
\ref{prop:AltinU}.
We show that $\Omega$ is feasible in the sense of
Definition
\ref{def:omega}. 
Definition
\ref{def:omega}(i) holds since
each element of $\Omega$ is a word in $\mathbb V$.
Definition \ref{def:omega}(ii) holds 
by the table above Lemma
\ref{lem:sigSact}.
We have shown that $\Omega$ is feasible.
The set $\Omega$ has a linear order $<$ from the theorem statement.
We show that $\Omega$ in order $<$ satisfies
Proposition \ref{prop:useful}(ii). 
In the present notation, we must show that the
vector space $U$ is spanned by
\begin{align}
a_1 \star a_2 \star \cdots \star a_n \qquad \quad 
n \in \mathbb N, \qquad \quad
a_1, a_2, \ldots, a_n \in \Omega, \qquad \quad
a_1 \leq a_2 \leq \cdots \leq a_n.
\label{eq:Ubasis}
\end{align}
Let $\mathbb A$ denote the set of alternating words in $\mathbb V$.
So $\mathbb A$ 
is the union of $\Omega$ and
$\lbrace G_{\ell+1}\rbrace_{\ell\in \mathbb N}$.
The generators $x,y$ of $U$ are contained in $\mathbb A$,
so the algebra $U$ is generated by 
$\mathbb A$.
Therefore the vector space $U$ is spanned by
\begin{align}
a_1\star a_2 \star \cdots \star a_n \qquad \quad n\in \mathbb N,
\qquad \quad a_1, a_2, \ldots, a_n \in \mathbb A.
\label{eq:start}
\end{align}
We have a linear order $<$ on $\Omega$; extend $<$
to $\mathbb A$ such that
\begin{align}
W_{-i} < G_{\ell+1} < \tilde G_{j+1} < W_{k+1} \qquad \qquad i,j,k,\ell
\in \mathbb N.
\label{eq:extend}
\end{align}
For every product 
$a_1 \star  a_2 \star \cdots \star a_n$ in
(\ref{eq:start}) and for every pair $a_{s-1},a_s$ $(2 \leq s\leq n)$
of adjacent terms such that $a_{s-1} > a_s$,
we can use 
the commutator relations from 
Sections
6, 7 to express $a_{s-1}\star a_s$ as a linear combination of
products $a'_{s-1}\star a'_s$ such that $a'_{s-1}, a'_s \in \mathbb A$
and $a'_{s-1} \leq a'_s$.
 Specifically,
use 
Lemma
\ref{lem:transGW} (resp. 
 Lemma
\ref{lem:ggww}) 
if the lengths of $a_{s-1}$ and
$a_s$ have opposite (resp. same)
parity.   
This argument shows that 
the vector space $U$ is spanned by the products
\begin{align*}
a_1 \star a_2 \star \cdots \star a_n \qquad \quad n\in \mathbb N,
\qquad \quad a_1, a_2, \ldots, a_n \in \mathbb A,\qquad \quad
a_1 \leq a_2 \leq \cdots \leq a_n.
\end{align*}
Let 
 $a_1 \star a_2 \star \cdots \star a_n$ denote one of the above products.
We show that
 $a_1\star a_2\star \cdots \star a_n$ is contained in 
the span of
(\ref{eq:Ubasis}).
Our proof is by induction on
the number $\zeta$ of terms among
$a_1,a_2,\ldots,a_n$ that are contained in
$\lbrace G_{\ell+1}\rbrace_{\ell \in \mathbb N}$.
First assume that
$\zeta=0$.  Then $a_i \in \Omega$  for $1 \leq i \leq n$, so
$a_1\star a_2 \star \cdots \star a_n$ is listed in
(\ref{eq:Ubasis}).
Next assume that $\zeta \geq 1$. 
At least one of $a_1,a_2,\ldots,a_n$ is contained in 
$\lbrace G_{\ell+1}\rbrace_{\ell \in \mathbb N}$.
Pick the maximal integer $s$ such that $1 \leq s\leq n$ 
and $a_s$ is contained in
$\lbrace G_{\ell+1}\rbrace_{\ell \in \mathbb N}$.
Eliminate $a_s$ using 
(\ref{eq:Gn1}) and straighten the result using
the commutator  relations in
Lemma
\ref{lem:transGW}.
These moves and induction show that 
$a_1\star a_2\star \cdots \star a_n$ 
is a linear combination of vectors, 
each contained
in the span of
(\ref{eq:Ubasis}).
So  $a_1\star a_2\star \cdots \star a_n$
is contained in the span of
(\ref{eq:Ubasis}), as desired.
By the above comments the vector space $U$ is spanned
by 
(\ref{eq:Ubasis}).
 Proposition 
\ref{prop:useful}(ii) is now satisfied. By that proposition
we find that $\Omega$ in order $<$ is a PBW basis for $U$.
\\
\noindent (ii) Similar to the proof of (i) above, with
line (\ref{eq:extend}) replaced by
\begin{align*}
W_{k+1} <  G_{\ell+1} < \tilde G_{j+1} < W_{-i} \qquad \qquad i,j,k,\ell
\in \mathbb N,
\end{align*}
and use 
(\ref{eq:Gn2})
instead of
(\ref{eq:Gn1}).
\\
\noindent (iii)  By (ii) above and case 3
of
Proposition
\ref{prop:table1}.
\\
\noindent (iv)  By (i) above and case 4 of Proposition
\ref{prop:table1}.
\\
\noindent (v)  By (ii) above and case 4
of Proposition
\ref{prop:table1}.
\\
\noindent (vi) By (i) above and case 3 of Proposition
\ref{prop:table1}.
\end{proof}

\begin{theorem}
\label{thm:firstkind}
A PBW basis for $U$ is obtained by the elements
\begin{align*}
\lbrace W_{-i} \rbrace_{i \in \mathbb N}, \qquad 
\lbrace  G_{j+1} \rbrace_{j\in \mathbb N}, \qquad  
\lbrace W_{k+1} \rbrace_{k\in \mathbb N}
\end{align*}
in any linear order $<$ that satisfies one of {\rm (i)--(vi)} below:
\begin{enumerate}
\item[\rm (i)]  $W_{-i} <  G_{j+1} < W_{k+1}$ for $i,j,k\in \mathbb N$;
\item[\rm (ii)]  $W_{k+1} < G_{j+1} < W_{-i}$ for $i,j,k\in \mathbb N$;
\item[\rm (iii)]  $W_{k+1} < W_{-i} < G_{j+1}$ for $i,j,k\in \mathbb N$;
\item[\rm (iv)]  $W_{-i} < W_{k+1} < G_{j+1}$ for $i,j,k\in \mathbb N$;
\item[\rm (v)]  $G_{j+1} < W_{k+1} < W_{-i}$ for $i,j,k\in \mathbb N$;
\item[\rm (vi)]  $G_{j+1} < W_{-i} < W_{k+1}$ for $i,j,k\in \mathbb N$.
\end{enumerate}
\end{theorem}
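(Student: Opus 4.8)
The plan is to obtain Theorem~\ref{thm:firstkind} from Theorem~\ref{thm:secondkind} by applying the automorphism $\sigma$. Recall that $\sigma$ is an automorphism of the $q$-shuffle algebra $\mathbb V$ that swaps $x$ and $y$; since $U$ is the subalgebra of the $q$-shuffle algebra generated by $x,y$ (Definition~\ref{def:Usub}), $\sigma$ maps $U$ onto $U$ and hence restricts to an automorphism of $U$. The key general fact is that an algebra automorphism sends a PBW basis to a PBW basis: if $\Omega\subseteq U$ with linear order $<$ is a PBW basis for $U$, then $\sigma(\Omega)$, equipped with the order $\sigma(a)<\sigma(b)\Leftrightarrow a<b$, is again a PBW basis, because $\sigma$ is a bijective algebra map and so carries the defining spanning/independence property of Definition~\ref{def:PBW} over verbatim.

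First I would record, using Lemma~\ref{lem:sigSact}(i), that $\sigma$ sends $W_{-i}\mapsto W_{i+1}$, $\tilde G_{j+1}\mapsto G_{j+1}$, and $W_{k+1}\mapsto W_{-k}$. Thus $\sigma$ maps the generating set (\ref{eq:omega}) of Theorem~\ref{thm:secondkind} bijectively onto $\lbrace W_{-i}\rbrace_{i\in\mathbb N}$, $\lbrace G_{j+1}\rbrace_{j\in\mathbb N}$, $\lbrace W_{k+1}\rbrace_{k\in\mathbb N}$, which is exactly the generating set in Theorem~\ref{thm:firstkind}. Applying $\sigma$ to each of the six orderings in Theorem~\ref{thm:secondkind} and then relabeling the free indices $i,j,k$ over $\mathbb N$ (which amounts to interchanging the roles of the two $W$-families), one checks that the six orderings of Theorem~\ref{thm:secondkind} produce precisely the six orderings of Theorem~\ref{thm:firstkind}: specifically, case~(i) of Theorem~\ref{thm:firstkind} comes from case~(ii) of Theorem~\ref{thm:secondkind}, case~(ii) from case~(i), case~(iii) from case~(iv), case~(iv) from case~(iii), case~(v) from case~(vi), and case~(vi) from case~(v). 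I would display this correspondence as a short table and invoke Theorem~\ref{thm:secondkind} together with the PBW-transport principle of the first paragraph.

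There is no real obstacle here; the only point requiring attention is the index bookkeeping, namely that a condition such as ``$W_{-i}<W_{k+1}$ for all $i,k\in\mathbb N$'' is invariant under $\sigma$ once we swap the two $W$-families, and that the transported relation is a well-defined linear order because $\sigma$ is a bijection. As an alternative route, one could instead re-run the proof of Theorem~\ref{thm:secondkind} word for word with $G$ in place of $\tilde G$, now using (\ref{eq:solvGt}) in the form $\tilde G_n = G_n + (W_0\star W_n - W_n\star W_0)/(1-q^{-2})$ to eliminate the $\tilde G_{j+1}$ from products; but the $\sigma$ argument is shorter and conceptually cleaner.
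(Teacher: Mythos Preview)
Your proposal is correct and takes essentially the same approach as the paper: apply the automorphism $\sigma$ to Theorem~\ref{thm:secondkind} and use Lemma~\ref{lem:sigSact}(i). Your version is more explicit about the case-by-case correspondence and about why $\sigma$ transports PBW bases, but the underlying argument is identical; note incidentally that the paper's proof contains a self-reference typo (it cites Theorem~\ref{thm:firstkind} where Theorem~\ref{thm:secondkind} is clearly intended).
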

\begin{proof} Apply the automorphism $\sigma$ to everything in
Theorem
\ref{thm:firstkind}, and use Lemma
\ref{lem:sigSact}(i).
\end{proof}

\noindent In our view, the above twelve PBW bases for $U$ are not
substantially different. So we focus on
the most convenient one, which is from
Theorem \ref{thm:secondkind}(i).

\begin{definition}\rm The 
{\it alternating PBW basis for $U$} consists of the elements
\begin{align*}
\lbrace W_{-i} \rbrace_{i \in \mathbb N}, \qquad 
\lbrace \tilde G_{j+1} \rbrace_{j\in \mathbb N}, \qquad  
\lbrace W_{k+1} \rbrace_{k\in \mathbb N}
\end{align*}
in a linear order $<$ that satisfies 
\begin{align*}
W_{-i} <  \tilde G_{j+1} < W_{k+1}  \qquad i,j,k\in \mathbb N.
\end{align*}
\end{definition}

\section{Comparing the
Damiani PBW basis and the
alternating PBW basis }

\noindent 
In Section 2 we described the Damiani PBW basis. In the
previous section we defined the alternating PBW basis.
In this section we show how these two PBW bases are related.
\medskip

\noindent We will adopt the following point of view.
Instead of working directly with the Damiani PBW basis
   (\ref{eqUq:PBWintro}), we will work with 
   the closely related elements
   $\lbrace xC_n\rbrace_{n=0}^\infty$,
   $\lbrace C_ny\rbrace_{n=0}^\infty$,
   $\lbrace C_n\rbrace_{n=1}^\infty$
   from the end of Section 4.  We begin with 
   $\lbrace xC_n\rbrace_{n=0}^\infty$ and
   $\lbrace C_ny\rbrace_{n=0}^\infty$.

\begin{definition}\rm We define some generating functions
in the indeterminate $t$:
\begin{align*}
C^-(t) = \sum_{n \in \mathbb N} t^n (xC_n),
\qquad \qquad 
C^+(t) = \sum_{n \in \mathbb N} t^n (C_n y).
\end{align*}
\end{definition}

\begin{proposition}
\label{prop:C4}
We have
\begin{align*}
 &C^-(-t) =
 W^-(q^{-1}t)\star D(q^{-1}t) = 
 D(qt)\star W^-(qt),
\\
 &C^+(-t) = D(q^{-1}t)\star W^+(q^{-1}t)=
W^+(qt) \star D(qt).
\end{align*}
\end{proposition}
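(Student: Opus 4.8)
The plan is to reduce the four asserted equalities to a single identity, and then prove that identity by showing that both sides solve the same first-order recursion in $t$.

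First, the second equality in each displayed line is exactly Lemma~\ref{lem:VW}, so only the first equality in each line needs proof. Moreover, applying the antiautomorphism $\sigma S$ of $\mathbb V$ to the (conjectured) identity $C^-(-t)=W^-(q^{-1}t)\star D(q^{-1}t)$ should yield the corresponding identity for $C^+(-t)$: by Lemma~\ref{lem:sigSact} the map $\sigma S$ fixes each $\tilde G_n$ (hence fixes $D(t)$, by Lemma~\ref{lem:VGinv}) and sends $W_{-n}\mapsto W_{n+1}$, so the right side goes to $D(q^{-1}t)\star W^+(q^{-1}t)$; and on the left, $\sigma S$ carries each Catalan word of length $2n$ to a Catalan word of length $2n$ with the same coefficient in $C_n$, so $\sigma S(xC_n)=C_ny$ and $\sigma S(C^-(-t))=C^+(-t)$. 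Thus it suffices to prove
\[
C^-(-t)=W^-(q^{-1}t)\star D(q^{-1}t).
\]

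To prove this, identify $U^+_q$ with $U$ via $\natural$. Using $\natural(E_{n\delta+\alpha_0})=q^{-2n}(q-q^{-1})^{2n}xC_n$ from Section~4 together with $\natural(E_\delta)=q^{-2}y\star x-x\star y=-(1-q^{-4})\tilde G_1$, the Damiani recursion $(\ref{eq:dam1introalt})$ becomes, once the normalization scalars are collected,
\[
C^-(-t)=x+\frac{t}{q-q^{-1}}\,[\tilde G_1,\,C^-(-t)]_\star .
\]
Comparing coefficients of $t^n$ shows this functional equation, together with the value $x$ at $t=0$, has a unique solution in $\mathbb V[[t]]$, so it remains to check that $F(t):=W^-(q^{-1}t)\star D(q^{-1}t)$ satisfies it. Here $F(0)=x$, and since $\tilde G_1$ commutes with every $D_j$ (Lemma~\ref{lem:VGprop1}) one has $[\tilde G_1,F(t)]_\star=[\tilde G_1,W^-(q^{-1}t)]_\star\star D(q^{-1}t)$; multiplying the target equation on the right by $\tilde G(q^{-1}t)=D(q^{-1}t)^{-1}$ (Lemma~\ref{lem:VGinv}) reduces it to
\[
(q-q^{-1})\bigl(W^-(q^{-1}t)-x\star\tilde G(q^{-1}t)\bigr)=t\,[\tilde G_1,W^-(q^{-1}t)]_\star .
\]
Summing $(\ref{eq:3p2vv})$ in the form $[\tilde G_m,W_0]_q=(q-q^{-1})W_{-m}$ over $m$ gives $q\,\tilde G(s)\star x-q^{-1}x\star\tilde G(s)=(q-q^{-1})W^-(s)$; feeding this into the first term on the left collapses the left side to $q\,[\tilde G(q^{-1}t),x]_\star$, and comparing coefficients of $t^n$ turns the whole claim into
\[
[\tilde G_n,W_0]_\star=[\tilde G_1,W_{-(n-1)}]_\star\qquad(n\ge 1),
\]
which is precisely $(\ref{eq:3p7vv})$ with $(k,\ell)=(n-1,0)$.

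The part requiring care is the tracking of the normalization scalars when converting Damiani's recursion into the clean functional equation displayed above, and the observation that after all the collapsing the residual identity is exactly $(\ref{eq:3p7vv})$; everything else is formal manipulation of generating functions using results already in hand.
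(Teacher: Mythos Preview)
Your proof is correct and follows essentially the same route as the paper's. Both arguments reduce to the single identity $C^-(-t)=W^-(q^{-1}t)\star D(q^{-1}t)$ via Lemma~\ref{lem:VW} and the antiautomorphism $\sigma S$, then verify it using the Damiani recursion (which becomes commutation with $\tilde G_1=xy$) together with Lemma~\ref{lem:VGprop1}, relation~(\ref{eq:3p2vv}), and relation~(\ref{eq:3p7vv}). The only difference is packaging: the paper carries out an explicit induction on $n$ at the level of the coefficient identity~(\ref{eq:xCn}), while you encode that induction as a first-order functional equation in $t$ and check that both sides satisfy it; your step of right-multiplying by $\tilde G(q^{-1}t)$ is exactly the generating-function form of the paper's use of~(\ref{eq:VGzero}).
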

\begin{proof} We first verify 
 $C^-(-t) =
 W^-(q^{-1}t)\star D(q^{-1}t)$.
To do this, it suffices to show that for $n\in \mathbb N$,
\begin{align}
x C_n = (-1)^n q^{-n} \sum_{i=0}^n W_{-i} \star D_{n-i}.
\label{eq:xCn}
\end{align} Let 
$\widehat{xC}_n$ denote the expression on the right in
(\ref{eq:xCn}). 
We show that $xC_n = \widehat{xC}_n$.
We will use induction on $n$. The result holds for $n=0$,
since 
$xC_0 =x= \widehat{xC}_0$.
Next assume that $n\geq 1$.
By the equation on the left in 
 (\ref{eq:dam1introalt}) together with the discussion at the
 end of Section 4, 
\begin{align}
(q-q^{-1})(xC_n) = (x C_{n-1}) \star (xy) - (xy) \star (x C_{n-1}).
\label{eq:xCR}
\end{align}
\noindent By induction,
the right-hand side of 
(\ref{eq:xCR}) is equal to
\begin{align*}
\widehat{x C}_{n-1} \star (xy) - (xy) \star \widehat{x C}_{n-1},
\end{align*}
which by construction and $xy=\tilde G_1$ is equal to
\begin{align*}
(-1)^{n-1} q^{1-n} \sum_{i=0}^{n-1}( W_{-i} \star D_{n-i-1}
\star \tilde G_1 - \tilde G_1 \star W_{-i} \star D_{n-i-1}),
\end{align*}
which by Lemma 
\ref{lem:VGprop1}
is equal to
\begin{align*}
(-1)^{n-1} q^{1-n} \sum_{i=0}^{n-1}( W_{-i} \star \tilde G_1
 - \tilde G_1 \star W_{-i}) \star D_{n-i-1},
\end{align*}
which by 
(\ref{eq:3p7vv}) is equal to
\begin{align*}
(-1)^{n-1} q^{1-n} \sum_{i=0}^{n-1}( W_{0} \star \tilde G_{i+1}
 - \tilde G_{i+1} \star W_{0}) \star D_{n-i-1},
\end{align*}
which
after a change of variables $i\mapsto i-1$ is equal to
\begin{align*}
(-1)^{n-1} q^{1-n} \sum_{i=0}^{n}( W_{0} \star \tilde G_{i}
 - \tilde G_{i} \star W_{0}) \star D_{n-i},
\end{align*}
which by $\tilde G_0=1$ and 
(\ref{eq:3p2vv}) 
is equal to 
\begin{align*}
(-1)^{n-1} q^{1-n} (1-q^{-2})
\sum_{i=0}^{n}( W_{0} \star \tilde G_{i} -W_{-i} ) \star D_{n-i},
\end{align*}
which by 
(\ref{eq:VGzero}) and algebra  
is equal to 
\begin{align*}
(-1)^{n} q^{-n} (q-q^{-1})
\sum_{i=0}^{n}W_{-i}  \star D_{n-i},
\end{align*}
which is equal to $(q-q^{-1})\widehat{xC}_n$.
We have shown that
$xC_n = \widehat{xC}_n$, so
(\ref{eq:xCn}) holds. We have verified
 $C^-(-t) =
 W^-(q^{-1}t)\star D(q^{-1}t)$.
In this equation apply $\sigma S$ to each side, to get
 $C^+(-t) =
 D(q^{-1}t)\star W^+(q^{-1}t)$.
The remaining equations in the proposition
statement are from Lemma
\ref{lem:VW}.
\end{proof}

\noindent Next we restate
Proposition
\ref{prop:C4} using the style of
(\ref{eq:xCn}).

\begin{theorem} For $n \in \mathbb N$,
\begin{align*}
xC_n &=
(-1)^n q^{-n} \sum_{i=0}^n W_{-i}\star D_{n-i}
=
(-1)^n q^n \sum_{i=0}^n D_{n-i} \star W_{-i},
\\
C_ny &= 
(-1)^n q^{-n} \sum_{i=0}^n D_{n-i} \star W_{i+1}
=
(-1)^n q^n \sum_{i=0}^n W_{i+1} \star D_{n-i}.
\end{align*}
\end{theorem}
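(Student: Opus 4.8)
The plan is to deduce this theorem from Proposition \ref{prop:C4} by extracting the coefficient of $t^n$ from each of the four generating-function identities given there. Recall that Proposition \ref{prop:C4} asserts
$C^-(-t) = W^-(q^{-1}t)\star D(q^{-1}t) = D(qt)\star W^-(qt)$ and
$C^+(-t) = D(q^{-1}t)\star W^+(q^{-1}t) = W^+(qt)\star D(qt)$.
So the only work is to match up coefficients, and the theorem is essentially just a restatement, in the spirit of equation (\ref{eq:xCn}), of what Proposition \ref{prop:C4} already says.

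First I would treat the identity $C^-(-t) = W^-(q^{-1}t)\star D(q^{-1}t)$. The coefficient of $t^n$ on the left is $(-1)^n\,(xC_n)$, since $C^-(t)=\sum_n t^n(xC_n)$. On the right, using $W^-(t)=\sum_i t^iW_{-i}$ and $D(t)=\sum_j t^jD_j$, the coefficient of $t^n$ in $W^-(q^{-1}t)\star D(q^{-1}t)$ is $q^{-n}\sum_{i=0}^n W_{-i}\star D_{n-i}$. Equating and multiplying by $(-1)^n$ gives
$xC_n = (-1)^n q^{-n}\sum_{i=0}^n W_{-i}\star D_{n-i}$,
which is exactly (\ref{eq:xCn}) and the first displayed formula for $xC_n$. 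Next I would do the same with the companion identity $C^-(-t) = D(qt)\star W^-(qt)$: the coefficient of $t^n$ on the right is now $q^{n}\sum_{i=0}^n D_{n-i}\star W_{-i}$, yielding
$xC_n = (-1)^n q^{n}\sum_{i=0}^n D_{n-i}\star W_{-i}$,
the second displayed formula for $xC_n$.

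Then I would repeat this coefficient extraction for the two $C^+$ identities in Proposition \ref{prop:C4}, using $C^+(t)=\sum_n t^n(C_ny)$ and $W^+(t)=\sum_k t^kW_{k+1}$. From $C^+(-t)=D(q^{-1}t)\star W^+(q^{-1}t)$ the coefficient of $t^n$ on the right is $q^{-n}\sum_{i=0}^n D_{n-i}\star W_{i+1}$ (reindexing the sum $\sum_{j+k=n}D_j\star W_{k+1}$ by $j=n-i$), and from $C^+(-t)=W^+(qt)\star D(qt)$ it is $q^{n}\sum_{i=0}^n W_{i+1}\star D_{n-i}$. Multiplying both by $(-1)^n$ gives the two displayed formulas for $C_ny$.

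Since this is purely bookkeeping, there is no real obstacle; the only point requiring a moment's care is the reindexing of the Cauchy-product sums so that the summation index in the final formulas is the subscript $i$ on $W_{-i}$ (resp. $W_{i+1}$) rather than on $D$, and keeping track of whether the argument $q^{\pm1}t$ contributes $q^{-n}$ or $q^{n}$ to the $t^n$-coefficient. I would therefore write the proof as: ``Extract the coefficient of $t^n$ from each of the four equations in Proposition \ref{prop:C4}, using the definitions of $C^{\pm}(t)$, $W^{\pm}(t)$, $D(t)$, and reindex.''
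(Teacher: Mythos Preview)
Your proposal is correct and follows exactly the paper's own proof, which simply says ``For each equation in Proposition \ref{prop:C4}, compare the coefficient of $t^n$ on either side.'' You have just spelled out the coefficient extraction in more detail.
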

\begin{proof} For each equation in Proposition
\ref{prop:C4}, compare the coefficient of $t^n$
on either side.
\end{proof}

\begin{proposition} 
\label{prop:W4}
We have
\begin{align*}
   &W^-(t) = C^-(-qt) \star \tilde G(t) = 
   \tilde G(t) \star C^-(-q^{-1}t),
   \\
   &W^+(t) = \tilde G(t)\star C^+(-q t) =
     C^+(-q^{-1}t)\star \tilde G(t).
\end{align*}
\end{proposition}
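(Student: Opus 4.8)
The plan is to obtain each of the four identities directly from Proposition \ref{prop:C4} by $\star$-multiplying through by $\tilde G(t)$ on the appropriate side, using Lemma \ref{lem:VGinv}, which asserts that $D(t)$ and $\tilde G(t)$ are two-sided inverses of one another under the $q$-shuffle product. The only subtlety is bookkeeping the rescalings $t \mapsto q^{\pm 1}t$ so that the arguments of $D$ and $\tilde G$ agree and Lemma \ref{lem:VGinv} can be applied; there is no serious obstacle beyond this.

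Concretely, for the first formula I would start from the equality $C^-(-t) = W^-(q^{-1}t)\star D(q^{-1}t)$ of Proposition \ref{prop:C4}, substitute $t \mapsto qt$ to get $C^-(-qt) = W^-(t)\star D(t)$, and then $\star$-multiply on the right by $\tilde G(t)$; since $D(t)\star \tilde G(t) = 1$ by Lemma \ref{lem:VGinv}, the right side collapses to $W^-(t)$, giving $W^-(t) = C^-(-qt)\star \tilde G(t)$. For the second formula I would instead use the companion equality $C^-(-t) = D(qt)\star W^-(qt)$, substitute $t\mapsto q^{-1}t$ to get $C^-(-q^{-1}t) = D(t)\star W^-(t)$, and $\star$-multiply on the left by $\tilde G(t)$, using $\tilde G(t)\star D(t) = 1$; this yields $W^-(t) = \tilde G(t)\star C^-(-q^{-1}t)$. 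The two formulas use the two different placements of $D$ appearing in Proposition \ref{prop:C4}, which is exactly why they carry the different rescalings $-qt$ and $-q^{-1}t$.

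The two formulas for $W^+(t)$ are obtained in the same manner from the two $C^+$-equalities of Proposition \ref{prop:C4}: substituting $t\mapsto qt$ in $C^+(-t) = D(q^{-1}t)\star W^+(q^{-1}t)$ and left-multiplying by $\tilde G(t)$ gives $W^+(t) = \tilde G(t)\star C^+(-qt)$, while substituting $t\mapsto q^{-1}t$ in $C^+(-t) = W^+(qt)\star D(qt)$ and right-multiplying by $\tilde G(t)$ gives $W^+(t) = C^+(-q^{-1}t)\star \tilde G(t)$. Alternatively, these last two follow from the $W^-$ formulas by the same $\sigma S$-symmetry argument used at the end of the proof of Proposition \ref{prop:C4}, since $\sigma S$ fixes $\tilde G(t)$ and interchanges $C^-(t)$ with $C^+(t)$ and $W^-(t)$ with $W^+(t)$. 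One should resist commuting $D(t)$ past $W^{\pm}(t)$ — that is not among the relations recorded in Lemma \ref{lem:VGprop1} — but it is never needed, since the inserted $\tilde G(t)$ always cancels against the $D(t)$ already adjacent to it.
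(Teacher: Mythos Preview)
Your proposal is correct and is essentially the same as the paper's proof, which simply says to evaluate the equations in Proposition~\ref{prop:C4} using Lemma~\ref{lem:VGinv}; you have just spelled out the substitutions $t\mapsto q^{\pm 1}t$ and the side on which to $\star$-multiply by $\tilde G(t)$ explicitly. Your caution about not commuting $D(t)$ past $W^{\pm}(t)$ is well taken and not needed here, exactly as you note.
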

\begin{proof} Evaluate the equations in Proposition
\ref{prop:C4}
using 
Lemma \ref{lem:VGinv}.
\end{proof}

\begin{theorem}
\label{prop:WCG}
For $n \in \mathbb N$,
\begin{align*}
W_{-n}
= \sum_{i=0}^n (-1)^i q^i (xC_i) \star \tilde G_{n-i}
= \sum_{i=0}^n (-1)^i q^{-i} \tilde G_{n-i} \star (xC_i),
\\
W_{n+1}
= \sum_{i=0}^n (-1)^i q^i \tilde G_{n-i} \star (C_i y)
= \sum_{i=0}^n (-1)^i q^{-i} (C_i y) \star \tilde G_{n-i}.
\end{align*}
\end{theorem}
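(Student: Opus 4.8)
The plan is to obtain the four identities by extracting the coefficient of $t^n$ from the four generating-function equations of Proposition~\ref{prop:W4}, exactly in the spirit of the way the preceding theorem was deduced from Proposition~\ref{prop:C4}. So the whole argument is a coefficient comparison, with no inductive or structural work needed beyond Proposition~\ref{prop:W4}.

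First I would recall the generating functions involved: $W^-(t)=\sum_{n\in\mathbb N}t^nW_{-n}$, $W^+(t)=\sum_{n\in\mathbb N}t^nW_{n+1}$, $\tilde G(t)=\sum_{n\in\mathbb N}t^n\tilde G_n$, together with $C^-(t)=\sum_{n\in\mathbb N}t^n(xC_n)$ and $C^+(t)=\sum_{n\in\mathbb N}t^n(C_ny)$. Under the argument shifts that occur in Proposition~\ref{prop:W4} these expand as $C^-(-qt)=\sum_{i\ge 0}(-1)^iq^it^i(xC_i)$ and $C^-(-q^{-1}t)=\sum_{i\ge 0}(-1)^iq^{-i}t^i(xC_i)$, and likewise with $C^-$, $xC_i$ replaced throughout by $C^+$, $C_iy$.

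Next I would form the $\star$-products on the right-hand sides of Proposition~\ref{prop:W4} and read off the $t^n$-coefficient by the Cauchy product rule. From $W^-(t)=C^-(-qt)\star\tilde G(t)$ this gives $W_{-n}=\sum_{i=0}^n(-1)^iq^i(xC_i)\star\tilde G_{n-i}$; from $W^-(t)=\tilde G(t)\star C^-(-q^{-1}t)$ it gives $W_{-n}=\sum_{i=0}^n(-1)^iq^{-i}\tilde G_{n-i}\star(xC_i)$; and the two $W^+$ equations of Proposition~\ref{prop:W4} yield in the same way the two stated formulas for $W_{n+1}$ in terms of $\tilde G_{n-i}\star(C_iy)$ and $(C_iy)\star\tilde G_{n-i}$. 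This reproduces the theorem statement verbatim.

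There is no genuine obstacle; the only point requiring care is the bookkeeping of the signs $(-1)^i$ and the powers $q^{\pm i}$ produced by the substitutions $t\mapsto-q^{\pm 1}t$ inside $C^\pm$. As a consistency check, one may note (as already used in the proof of Proposition~\ref{prop:C4}) that $\sigma S$ fixes each $\tilde G_k$, sends $W_{-k}\mapsto W_{k+1}$ and $xC_k\mapsto C_ky$, and reverses $\star$-products, so the two $W^+$ identities are precisely the $\sigma S$-images of the two $W^-$ identities, and it suffices to check the latter.
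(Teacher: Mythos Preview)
Your proposal is correct and matches the paper's approach exactly: the paper's proof is the single sentence ``For each equation in Proposition~\ref{prop:W4}, compare the coefficients of $t^n$ on either side,'' and you have simply spelled out that coefficient comparison in detail. The additional $\sigma S$ consistency check you include is a nice sanity check but is not needed for the argument.
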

\begin{proof} For each equation in
Proposition
\ref{prop:W4}, compare the coefficients of $t^n$ on either side.
\end{proof}

\noindent Now we bring in $\lbrace C_n\rbrace_{n=1}^\infty$.
Before getting into detail
we emphasize one point.

\begin{lemma}
\label{lem:Ccom}
In the $q$-shuffle algebra $\mathbb V$
the elements
$\lbrace C_n\rbrace_{n\in \mathbb N}$ mutually commute.
\end{lemma}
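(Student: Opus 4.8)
The plan is to transport this fact from $U^+_q$ to $U$ along the algebra isomorphism $\natural$. Recall two ingredients already in hand: from Section~2 (citing \cite[p.~307]{damiani}) the Damiani elements $\lbrace E_{n\delta}\rbrace_{n=1}^\infty$ mutually commute in $U^+_q$; and from the end of Section~4 the map $\natural$ sends $E_{n\delta}\mapsto -q^{-2n}(q-q^{-1})^{2n-1}C_n$ for $n\geq 1$, with $\natural:U^+_q\to U$ an algebra isomorphism onto $U\subseteq \mathbb V$ equipped with the $q$-shuffle product $\star$.

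First I would apply $\natural$ to the relation $\lbrack E_{m\delta},E_{n\delta}\rbrack=0$ for $m,n\geq 1$, concluding that $\natural(E_{m\delta})$ and $\natural(E_{n\delta})$ commute in $U$ with respect to $\star$. Next I would observe that each $\natural(E_{n\delta})$ is a \emph{nonzero} scalar multiple of $C_n$: the scalar $-q^{-2n}(q-q^{-1})^{2n-1}$ is nonzero since $q$ is nonzero and not a root of unity, hence $q-q^{-1}\neq 0$. Cancelling these scalars yields $C_m\star C_n=C_n\star C_m$ for all $m,n\geq 1$. Finally, $C_0=1$ is the multiplicative identity of $\mathbb V$, so it commutes with every element; combining, $\lbrace C_n\rbrace_{n\in \mathbb N}$ mutually commute.

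I do not expect a substantial obstacle: the commutativity of the $\lbrace E_{n\delta}\rbrace$ is precisely the cited input from Damiani, and the remainder is the routine bookkeeping of carrying a relation across an algebra isomorphism together with invertibility of the relevant scalars. (An alternative, more self-contained route would argue directly inside the $q$-shuffle algebra using the generating-function identities of Section~9, but that is considerably more involved, since each $C_n$ is expressed in terms of the non-commuting families $\lbrace W_{-i}\rbrace$ and $\lbrace D_j\rbrace$; the pullback via $\natural$ is the natural argument here.)
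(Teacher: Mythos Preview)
Your proof is correct and follows exactly the paper's own argument: the paper's proof simply cites the commutativity of $\lbrace E_{n\delta}\rbrace_{n=1}^\infty$ from Damiani (the comment below Proposition~\ref{prop:PBWbasis}) together with the $\natural$-image formula at the end of Section~4. Your write-up adds the minor bookkeeping (nonvanishing of the scalar, the case $C_0=1$) that the paper leaves implicit.
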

\begin{proof} By the comment below Proposition
    \ref{prop:PBWbasis} and the discussion at the end of Section 4.
    \end{proof}

\begin{definition}\rm We define a generating function
in the indeterminate $t$:
\begin{align*}
C(t) &= \sum_{n \in \mathbb N} t^n C_n.
\end{align*}
\end{definition}

\begin{proposition} 
\label{prop:CVV}
We have
\begin{align}
&C(-t) = D(qt) \star D(q^{-1}t).
\label{eq:CVsV}
\end{align}
\end{proposition}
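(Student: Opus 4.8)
The plan is to argue by generating functions, using Damiani's recursion for the imaginary root vectors. First I would apply the homomorphism $\natural$ to the recursion (\ref{eq:dam2introalt}) for $E_{n\delta}$. Invoking the $\natural$-images of $E_{n\delta}$ and $E_{(n-1)\delta+\alpha_1}$ recorded at the end of Section 4 together with $\natural(A)=x$, one finds that for $n\geq 1$
\begin{align*}
(q-q^{-1})\,C_n = q^2\,x\star(C_{n-1}y) - (C_{n-1}y)\star x .
\end{align*}
Multiplying by $t^n$, summing over $n\geq 1$ and using $C_0=1$, this yields the generating-function identity
\begin{align*}
C(-t) = 1 - \frac{t}{q-q^{-1}}\bigl(q^2\,x\star C^+(-t) - C^+(-t)\star x\bigr).
\end{align*}
Since $q(q-q^{-1})=q^2-1$, it therefore suffices to prove
\begin{align*}
q^2\,x\star C^+(-t) - C^+(-t)\star x = (q^2-1)(qt)^{-1}\bigl(1 - D(qt)\star D(q^{-1}t)\bigr),
\end{align*}
as that makes the previous display collapse to $C(-t)=D(qt)\star D(q^{-1}t)$.

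To prove this last identity I would assemble the following, each already available. Put $P(t)=D(qt)\star D(q^{-1}t)$, and note $D(q^{-1}t)\star D(qt)=P(t)$ as well, since the $\lbrace D_n\rbrace$ mutually commute (Lemma \ref{lem:VGprop1}). In generating-function form, (\ref{eq:3p1vv}) reads $x\star W^+(t)-W^+(t)\star x=(1-q^{-2})t^{-1}\bigl(\tilde G(t)-G(t)\bigr)$. Equation (\ref{eq:3p2vv}) reads $\lbrack\tilde G(t),x\rbrack_q=(q-q^{-1})W^-(t)$; conjugating this by $D(t)$ and using $D(t)\star\tilde G(t)=1=\tilde G(t)\star D(t)$ (Lemma \ref{lem:VGinv}) gives $q^2\,x\star D(t)-D(t)\star x=(q^2-1)\,D(t)\star W^-(t)\star D(t)$. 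Equation (\ref{eq:GV2}) gives $G(t)=D(q^{-2}t)+t\,W^+(t)\star D(t)\star W^-(t)$. Finally, Proposition \ref{prop:C4} gives $C^+(-t)=W^+(qt)\star D(qt)$, and $\tilde G(qt)\star D(qt)=1$ by Lemma \ref{lem:VGinv}.

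Now start from $C^+(-t)=W^+(qt)\star D(qt)$ and expand $q^2\,x\star C^+(-t)-C^+(-t)\star x$ in two passes. In the first pass, rewrite $x\star W^+(qt)$ by the $W^+$-identity above (at $t\mapsto qt$); the resulting term $\tilde G(qt)\star D(qt)$ equals $1$, while $G(qt)\star D(qt)$ is computed from (\ref{eq:GV2}) at $t\mapsto qt$ and from $D(q^{-1}t)\star D(qt)=P(t)$, producing a term $-(q^2-1)\,W^+(qt)\star D(qt)\star W^-(qt)\star D(qt)$ alongside $(q^2-1)(qt)^{-1}\bigl(1-P(t)\bigr)$. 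In the second pass, rewrite the leftover $W^+(qt)\star\bigl(q^2\,x\star D(qt)-D(qt)\star x\bigr)$ by the conjugated identity (at $t\mapsto qt$); this produces $+(q^2-1)\,W^+(qt)\star D(qt)\star W^-(qt)\star D(qt)$. The two fourfold products are literally the same element, so they cancel, and what remains is exactly $(q^2-1)(qt)^{-1}\bigl(1-P(t)\bigr)$; hence $C(-t)=D(qt)\star D(q^{-1}t)$. The step I expect to be the main obstacle is this two-pass expansion: all products live in the noncommutative $q$-shuffle algebra, so the order in the fourfold products must be tracked with care, and the whole computation hinges on those two copies occurring with opposite signs. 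Everything else is routine formal-power-series bookkeeping over $\mathbb V$.
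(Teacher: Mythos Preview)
Your proposal is correct and complete: the recursion $(q-q^{-1})C_n=q^2\,x\star(C_{n-1}y)-(C_{n-1}y)\star x$ follows from (\ref{eq:dam2introalt}) exactly as you say, the conjugated identity $q^2\,x\star D(t)-D(t)\star x=(q^2-1)D(t)\star W^-(t)\star D(t)$ is a clean consequence of (\ref{eq:3p2vv}) and Lemma~\ref{lem:VGinv}, and the two fourfold products $W^+(qt)\star D(qt)\star W^-(qt)\star D(qt)$ do appear with opposite signs and cancel. Nothing you invoke is downstream of Proposition~\ref{prop:CVV}, so there is no circularity.

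Your route is genuinely different from the paper's. The paper does not start from the recursion for $E_{n\delta}$; instead it starts from the formula $W_{-n}=\sum_{i}(-1)^iq^i(xC_i)\star\tilde G_{n-i}$ of Theorem~\ref{prop:WCG}, writes $W_{-n}=xG_n$ and $\tilde G_{n-i}=xW_{n-i}$ in the free algebra, and uses the $q$-shuffle recursion (\ref{eq:uvcirc}) to strip a leading $x$ from both sides. This yields $G(t)=C(-qt)\star\tilde G(t)+q^2t\,W^-(t)\star D(t)\star W^+(t)$, which is compared with (\ref{eq:GV1}) to give $D(q^2t)=C(-qt)\star\tilde G(t)$, and the result follows after a change of variables and Lemma~\ref{lem:VGinv}. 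Thus the paper works at the word level (free product identities and letter-stripping) and lands on (\ref{eq:GV1}); you work entirely at the $q$-shuffle/generating-function level via (\ref{eq:3p1vv}), (\ref{eq:3p2vv}) and land on (\ref{eq:GV2}). Your argument avoids any appeal to the free-algebra structure of $\mathbb V$ or to (\ref{eq:uvcirc}), which makes it more intrinsic to the $q$-shuffle relations already established; the paper's argument, by contrast, highlights the role of the underlying free-algebra concatenation and the letter-stripping trick that is used elsewhere in Section~11.
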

\begin{proof} 
We first show that for $n \in \mathbb N$,
\begin{align}
G_n = \sum_{i=0}^n (-1)^i q^i C_i \star \tilde G_{n-i} +
q^2 \sum_{i=0}^{n-1} (-1)^i q^i (xC_i) \star W_{n-i}.
\label{eq:CVV1}
\end{align}
For $n=0$ the equation
(\ref{eq:CVV1}) holds, since each side is equal to 1.
Next assume  that $n\geq 1$.
By the first equation in Theorem
\ref{prop:WCG},
\begin{align}
W_{-n} =
(-1)^n q^n xC_n+ 
 \sum_{i=0}^{n-1} (-1)^i q^i (xC_i)\star \tilde G_{n-i}.
\label{eq:CVV2}
\end{align}
\noindent We will evaluate 
(\ref{eq:CVV2})  after some comments.
By Lemma
\ref{lem:free}  we have $W_{-n} = xG_n$.
Also by 
 Lemma
\ref{lem:free}
we have $\tilde G_{n-i}= xW_{n-i} $ for $0 \leq i \leq n-1$.
By this and (\ref{eq:uvcirc}),
\begin{align}
(xC_i) \star \tilde G_{n-i} = x\bigl(
C_i \star \tilde G_{n-i} + q^2 (xC_i)\star W_{n-i} \bigr)
\qquad \qquad (0 \leq i \leq n-1).
\label{eq:idea1}
\end{align}
\noindent Evaluating 
(\ref{eq:CVV2}) using the above comments, we obtain
\begin{align*}
xG_n = x \sum_{i=0}^n (-1)^i q^i C_i \star \tilde G_{n-i} + 
q^2 x \sum_{i=0}^{n-1} (-1)^i q^i (xC_i) \star W_{n-i}.
\end{align*}
In the above equation, each term has an $x$
on the left;
removing such $x$ we obtain 
(\ref{eq:CVV1}). 
In terms of generating functions,
(\ref{eq:CVV1}) becomes
\begin{align*}
G(t) = C(-qt) \star \tilde G(t) + q^2t C^-(-qt) \star W^+(t).
\end{align*}
Adjusting this equation using the first equation in Proposition
\ref{prop:C4}, we obtain
\begin{align}
G(t) = C(-qt) \star \tilde G(t) + q^2t W^-(t) \star D(t) \star W^+(t).
\label{eq:CVV3}
\end{align}
Comparing 
(\ref{eq:GV1}),
(\ref{eq:CVV3})
we obtain
$D(q^2t)= C(-qt) \star \tilde G(t)$. In this equation 
replace $t$ by $q^{-1}t$ and use
Lemma
\ref{lem:VGinv} to obtain
(\ref{eq:CVsV}).
\end{proof}

\begin{proposition} 
\label{prop:CVVform}
For $n \in \mathbb N$,
\begin{align*}
C_n = (-1)^n \sum_{i=0}^n q^{2i-n} D_i \star D_{n-i}
\end{align*}
\end{proposition}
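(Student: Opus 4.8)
The plan is to extract this from the generating-function identity
$C(-t) = D(qt) \star D(q^{-1}t)$ established in Proposition \ref{prop:CVV}.
The right-hand side expands as
\begin{align*}
D(qt) \star D(q^{-1}t) &= \Bigl(\sum_{i \in \mathbb N} q^i t^i D_i\Bigr) \star
\Bigl(\sum_{j\in \mathbb N} q^{-j} t^j D_j\Bigr)
= \sum_{n\in \mathbb N} t^n \sum_{i=0}^n q^{2i-n} D_i \star D_{n-i},
\end{align*}
where in the last step I collect the terms with $i+j=n$ and write $q^i q^{-j} = q^{i-(n-i)} = q^{2i-n}$.
On the other side, $C(-t) = \sum_{n\in \mathbb N} (-t)^n C_n = \sum_{n\in \mathbb N} (-1)^n t^n C_n$.
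Comparing the coefficient of $t^n$ on the two sides gives
$(-1)^n C_n = \sum_{i=0}^n q^{2i-n} D_i \star D_{n-i}$, and multiplying by $(-1)^n$ yields the claimed formula.

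The only subtlety is that the products in the statement should be understood with respect to the $q$-shuffle product $\star$, consistent with the convention noted after Proposition \ref{prop:rel1} and with the fact that $C(t)$, $D(t)$ are generating functions with coefficients in the $q$-shuffle algebra $\mathbb V$. There is no real obstacle here: the argument is purely a matter of reading off the coefficient of $t^n$ from an already-proved identity of formal power series in $t$ with coefficients in $\mathbb V$. The one thing worth checking is that the reindexing of the Cauchy-type product is valid, which it is because $D(t)$ has $D_0 = 1$ and all coefficients live in the associative algebra $(\mathbb V, \star)$, so the formal-power-series multiplication is well defined term by term.
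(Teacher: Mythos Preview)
Your proof is correct and follows exactly the paper's approach: compare the coefficient of $t^n$ on each side of the identity $C(-t) = D(qt)\star D(q^{-1}t)$ from Proposition~\ref{prop:CVV}. The paper's proof is the single sentence ``In the equation (\ref{eq:CVsV}) compare the coefficient of $t^n$ on either side,'' and you have simply written out that comparison in detail.
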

\begin{proof} 
In the equation (\ref{eq:CVsV}) compare the coefficient
of $t^n$ on either side.
\end{proof}

\begin{corollary}
\label{prop:CVpoly}
For $n\geq 1$ the following hold in the $q$-shuffle algebra $\mathbb V$.
\begin{enumerate}
\item[\rm (i)] $C_n$ is a homogeneous polynomial in 
$D_1, D_2, \ldots, D_n$ that has total degree $n$,
where we view each $D_i$ as having degree $i$.
In this polynomial the coefficient of
$D_n$ is $(-1)^n (q^n+q^{-n})$.
\item[\rm (ii)] $D_n$ is a homogeneous polynomial in 
$C_1, C_2,\ldots, C_n$ that has total degree $n$, where we view
each $C_i$ as having degree $i$.
In this polynomial the coefficient of
$C_n$ is $(-1)^n(q^n+q^{-n})^{-1}$.
\end{enumerate}
\end{corollary}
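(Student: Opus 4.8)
The plan is to extract both statements directly from Proposition \ref{prop:CVVform}. First I would rewrite the identity
\[
C_n = (-1)^n \sum_{i=0}^n q^{2i-n} D_i \star D_{n-i}
\]
by using $D_0 = 1$ to isolate the extreme terms $i=0$ and $i=n$:
\[
C_n = (-1)^n (q^n + q^{-n}) D_n + (-1)^n \sum_{i=1}^{n-1} q^{2i-n} D_i \star D_{n-i}.
\]
For $1 \le i \le n-1$ the summand $D_i \star D_{n-i}$ involves only $D_1, \ldots, D_{n-1}$, and in the grading where $\deg D_i = i$ it is a monomial of total degree $i + (n-i) = n$; moreover the $D_i$ mutually commute by Lemma \ref{lem:VGprop1}, so this expression is an unambiguous commutative polynomial. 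This proves (i), with the coefficient of $D_n$ equal to $(-1)^n(q^n + q^{-n})$.

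For part (ii) I would induct on $n$, solving the displayed identity for $D_n$. Since $q$ is not a root of unity we have $q^{2n} \ne -1$, hence $q^n + q^{-n} = q^{-n}(q^{2n}+1) \ne 0$, and therefore
\[
D_n = (-1)^n (q^n + q^{-n})^{-1} C_n - (q^n + q^{-n})^{-1} \sum_{i=1}^{n-1} q^{2i-n}\, D_i \star D_{n-i}.
\]
The base case $n=1$ gives $D_1 = -(q + q^{-1})^{-1} C_1$, which has the asserted form. For $n \ge 2$, the inductive hypothesis expresses each of $D_1, \ldots, D_{n-1}$ as a homogeneous polynomial of the correct degree in $C_1, \ldots, C_{n-1}$, and since the $C_i$ mutually commute (Lemma \ref{lem:Ccom}) each product $D_i \star D_{n-i}$ above becomes a homogeneous polynomial in $C_1, \ldots, C_{n-1}$ of total degree $n$. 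Substituting these in shows that $D_n$ is a homogeneous polynomial in $C_1, \ldots, C_n$ of total degree $n$; the only contribution to the $C_n$-term comes from the first summand, giving coefficient $(-1)^n(q^n + q^{-n})^{-1}$.

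There is essentially no real obstacle here: the argument is a routine triangular inversion. The only two points that need a word are the nonvanishing of $q^n + q^{-n}$, which is exactly where the hypothesis that $q$ is not a root of unity is used, and the observation that both $\{D_i\}_{i\ge 1}$ and $\{C_i\}_{i\ge 1}$ are commuting families, so that ``homogeneous polynomial'' in these generators is well defined.
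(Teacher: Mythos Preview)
Your proof is correct and follows exactly the approach the paper uses: part (i) is read off directly from Proposition~\ref{prop:CVVform} by isolating the boundary terms, and part (ii) follows from (i) by induction on $n$. Your added remarks about the nonvanishing of $q^n+q^{-n}$ and the commutativity of the families $\{D_i\}$ and $\{C_i\}$ are precisely the details that make the paper's terse ``by induction on $n$'' rigorous.
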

\begin{proof} (i)  By Proposition
\ref{prop:CVVform}.
\\
\noindent (ii) By (i) above and induction on $n$.
\end{proof}

\begin{corollary}
\label{prop:CGpoly}
For $n\geq 1$ the following hold in the $q$-shuffle algebra $\mathbb V$.
\begin{enumerate}
\item[\rm (i)] $C_n$ is a homogeneous polynomial in 
$\tilde G_1, \tilde G_2,\ldots, \tilde G_n$ that has total degree $n$,
where we view each $\tilde G_i$ as having degree $i$.
In this polynomial the coefficient of
$\tilde G_n$ is $(-1)^{n+1} (q^n+q^{-n})$.
\item[\rm (ii)] $\tilde G_n$ is a homogeneous polynomial in 
$C_1, C_2,\ldots, C_n$ that has total degree $n$, where we view
each $C_i$ as having degree $i$.
In this polynomial the coefficient of
$C_n$ is $(-1)^{n+1}(q^n+q^{-n})^{-1}$.
\end{enumerate}
\end{corollary}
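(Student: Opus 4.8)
The plan is to deduce both parts purely formally, by composing the two polynomial ``dictionaries'' already in hand: Lemma~\ref{prop:VCpoly}, which converts between the $D_i$ and the $\tilde G_i$, and Corollary~\ref{prop:CVpoly}, which converts between the $D_i$ and the $C_i$. Since the $D_i$ commute with each other and with the $\tilde G_j$ (Lemma~\ref{lem:VGprop1}), and the $C_i$ mutually commute (Lemma~\ref{lem:Ccom}), this manipulation takes place in a genuine commutative polynomial setting inside the $q$-shuffle algebra, so substituting one family of expressions into another is legitimate.

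For part (i), I would start from Corollary~\ref{prop:CVpoly}(i): $C_n$ is a homogeneous polynomial of weighted degree $n$ in $D_1,\dots,D_n$ (where $\deg D_i=i$), with the coefficient of $D_n$ equal to $(-1)^n(q^n+q^{-n})$. Then I substitute, for each $i$, the expression for $D_i$ as a homogeneous polynomial of weighted degree $i$ in $\tilde G_1,\dots,\tilde G_i$ provided by Lemma~\ref{prop:VCpoly}(i), whose $\tilde G_i$-coefficient is $-1$. The result is manifestly a homogeneous polynomial of weighted degree $n$ in $\tilde G_1,\dots,\tilde G_n$, because each monomial $D_{i_1}\star\cdots\star D_{i_m}$ occurring in $C_n$ has $i_1+\cdots+i_m=n$ and becomes, after substitution, a product of homogeneous pieces of weighted degrees $i_1,\dots,i_m$. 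To pin down the coefficient of the lone variable $\tilde G_n$, the key observation is that every monomial other than $D_n$ itself is a $\star$-product $D_{i_1}\star\cdots\star D_{i_m}$ with $m\ge 2$ and every $i_k\le n-1$, hence after substitution involves only $\tilde G_1,\dots,\tilde G_{n-1}$ and cannot contribute to $\tilde G_n$; so the $\tilde G_n$-coefficient of $C_n$ is the product of the $D_n$-coefficient of $C_n$ and the $\tilde G_n$-coefficient of $D_n$, namely $(-1)^n(q^n+q^{-n})\cdot(-1)=(-1)^{n+1}(q^n+q^{-n})$, as required.

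Part (ii) is the mirror image: I would start from Lemma~\ref{prop:VCpoly}(ii), which writes $\tilde G_n$ as a homogeneous weighted-degree-$n$ polynomial in $D_1,\dots,D_n$ with $D_n$-coefficient $-1$, and then substitute Corollary~\ref{prop:CVpoly}(ii), which writes each $D_i$ as a homogeneous weighted-degree-$i$ polynomial in $C_1,\dots,C_i$ with $C_i$-coefficient $(-1)^i(q^i+q^{-i})^{-1}$; the identical ``only $D_n$ reaches index $n$'' argument yields the $C_n$-coefficient of $\tilde G_n$ as $(-1)\cdot(-1)^n(q^n+q^{-n})^{-1}=(-1)^{n+1}(q^n+q^{-n})^{-1}$. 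I do not anticipate any real obstacle; the one point to watch is the bookkeeping that $\star$-products of generators of index strictly less than $n$ never produce the lone generator of index $n$, which is precisely what makes the leading coefficients multiply cleanly in both directions.
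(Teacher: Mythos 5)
Your proposal is correct and follows exactly the paper's route: the paper's proof is the one-line instruction to combine Lemma \ref{prop:VCpoly} with Corollary \ref{prop:CVpoly}, and you have simply spelled out the substitution and the leading-coefficient bookkeeping (including the correct observation that no product of lower-index generators can produce the lone generator $\tilde G_n$ or $C_n$). Nothing to change.
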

\begin{proof} Combine
Lemma \ref{prop:VCpoly} and
and Corollary \ref{prop:CVpoly}.
\end{proof}

\begin{corollary} 
\label{cor:CVGt}
The following {\rm (i)--(iii)} coincide:
\begin{enumerate}
\item[\rm (i)] the subalgebra of the $q$-shuffle algebra $\mathbb V$
generated by $\lbrace C_n \rbrace_{n=1}^\infty$;
\item[\rm (ii)] the subalgebra of the $q$-shuffle algebra $\mathbb V$
generated by $\lbrace D_n \rbrace_{n=1}^\infty$;
\item[\rm (iii)] the subalgebra of the $q$-shuffle algebra $\mathbb V$
generated by $\lbrace \tilde G_n \rbrace_{n=1}^\infty$.
\end{enumerate}
\end{corollary}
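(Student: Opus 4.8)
The plan is to deduce the three-way coincidence of subalgebras from the two-way coincidences already established in the excerpt, by an elementary transitivity argument combined with the mutual-inclusion bookkeeping supplied by the polynomial identities. Concretely, Lemma \ref{lem:samesub} already states that the subalgebra generated by $\lbrace D_n\rbrace_{n=1}^\infty$ equals the subalgebra generated by $\lbrace\tilde G_n\rbrace_{n=1}^\infty$; this is the coincidence of (ii) and (iii). It therefore remains only to identify (i) with one of these, and for this I would invoke Corollary \ref{prop:CVpoly}: part (i) of that corollary shows each $C_n$ is a polynomial (with no constant term, being homogeneous of positive degree) in $D_1,\dots,D_n$, so the subalgebra generated by $\lbrace C_n\rbrace$ is contained in the subalgebra generated by $\lbrace D_n\rbrace$; part (ii) shows each $D_n$ is a polynomial in $C_1,\dots,C_n$, giving the reverse inclusion. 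Hence (i) $=$ (ii), and combining with Lemma \ref{lem:samesub} yields (i) $=$ (ii) $=$ (iii).

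First I would state that by Corollary \ref{prop:CVpoly}(i) every $C_n$ with $n\geq 1$ lies in the subalgebra generated by $\lbrace D_m\rbrace_{m=1}^\infty$, so the subalgebra in (i) is contained in the subalgebra in (ii). Next, by Corollary \ref{prop:CVpoly}(ii) every $D_n$ with $n\geq 1$ lies in the subalgebra generated by $\lbrace C_m\rbrace_{m=1}^\infty$, so the subalgebra in (ii) is contained in the subalgebra in (i); therefore (i) and (ii) coincide. Finally I would cite Lemma \ref{lem:samesub} for the equality of (ii) and (iii), and conclude that all three coincide. Alternatively, one could bypass $D_n$ entirely and use Corollary \ref{prop:CGpoly} directly to show (i) $=$ (iii), but routing through $D_n$ keeps the argument parallel to Lemma \ref{lem:samesub} and reuses exactly the corollaries just proved.

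There is essentially no obstacle here: the statement is a formal consequence of inclusions already in hand, and the only thing to be careful about is the homogeneity/no-constant-term point — each $C_n$ and each $D_n$ with $n\geq 1$ is homogeneous of positive $\mathbb N^2$-degree, so the polynomial expressions from Corollaries \ref{prop:CVpoly} and \ref{prop:CGpoly} involve no constant term and hence genuinely land in the relevant subalgebras rather than requiring the identity element to be adjoined. (This is automatic since subalgebras here share the identity anyway.) So the proof is three lines citing Corollary \ref{prop:CVpoly}(i),(ii) and Lemma \ref{lem:samesub}.

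\begin{proof}
By Corollary \ref{prop:CVpoly}(i), for each $n\geq 1$ the element $C_n$ is a polynomial in $D_1,\dots,D_n$ with no constant term, so $C_n$ lies in the subalgebra described in (ii). Hence the subalgebra in (i) is contained in the subalgebra in (ii). By Corollary \ref{prop:CVpoly}(ii), for each $n\geq 1$ the element $D_n$ is a polynomial in $C_1,\dots,C_n$ with no constant term, so $D_n$ lies in the subalgebra described in (i). Hence the subalgebra in (ii) is contained in the subalgebra in (i). Therefore (i) and (ii) coincide. By Lemma \ref{lem:samesub}, (ii) and (iii) coincide. The result follows.
\end{proof}
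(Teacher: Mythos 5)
Your proof is correct and follows essentially the same route as the paper: the paper's proof cites Lemma \ref{lem:samesub} together with Corollary \ref{prop:CGpoly} (linking $\lbrace C_n\rbrace$ to $\lbrace\tilde G_n\rbrace$ directly), whereas you link $\lbrace C_n\rbrace$ to $\lbrace D_n\rbrace$ via Corollary \ref{prop:CVpoly} instead — a cosmetic difference, since both corollaries provide the same kind of two-way polynomial expressibility and hence mutual inclusion of subalgebras.
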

\begin{proof} 
By Lemma
\ref{lem:samesub}
and
Corollary
\ref{prop:CGpoly}.
\end{proof}

\noindent 
In Corollary
\ref{cor:CVGt} we saw that 
$\lbrace C_n\rbrace_{n=1}^\infty $
and 
$\lbrace \tilde G_n\rbrace_{n=1}^\infty $
generate the same subalgebra of the $q$-shuffle algebra $\mathbb V$.
Next we discuss
in more detail how
$\lbrace C_n\rbrace_{n=1}^\infty $
and 
$\lbrace \tilde G_n\rbrace_{n=1}^\infty $
are related.

\begin{lemma}
\label{prop:CVC}
\noindent We have
\begin{align*}
 C(-qt)\star \tilde G(q^2t) = D(t) = 
C(-q^{-1}t) \star \tilde G(q^{-2}t).
\end{align*}
\end{lemma}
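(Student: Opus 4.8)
The plan is to obtain both equalities as formal power-series consequences of the factorization (\ref{eq:CVsV}) already proved in Proposition \ref{prop:CVV}, namely $C(-t) = D(qt)\star D(q^{-1}t)$, combined with the inversion $D(s)\star\tilde G(s) = 1 = \tilde G(s)\star D(s)$ of Lemma \ref{lem:VGinv} and the commutation relations of Lemma \ref{lem:VGprop1}. Everything takes place in the $q$-shuffle algebra $\mathbb V$, so no convergence issues arise; the whole argument is a rescaling of the variable $t$ followed by two cancellations.

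First I would rescale $t$ in (\ref{eq:CVsV}). Replacing $t$ by $qt$ gives $C(-qt) = D(q^2 t)\star D(t)$, and replacing $t$ by $q^{-1}t$ gives $C(-q^{-1}t) = D(t)\star D(q^{-2}t)$. For the right-hand equality of the lemma I then right-multiply the second of these by $\tilde G(q^{-2}t)$; by Lemma \ref{lem:VGinv} with $t$ replaced by $q^{-2}t$ the product $D(q^{-2}t)\star\tilde G(q^{-2}t)$ equals $1$, leaving $C(-q^{-1}t)\star\tilde G(q^{-2}t) = D(t)$, as desired. For the left-hand equality I right-multiply $C(-qt) = D(q^2 t)\star D(t)$ by $\tilde G(q^2 t)$; here the factor $D(t)$ is wedged between $D(q^2 t)$ and $\tilde G(q^2 t)$, so I first use Lemma \ref{lem:VGprop1}, which gives $[D_i,\tilde G_j]=0$ and hence $D(t)\star\tilde G(q^2 t) = \tilde G(q^2 t)\star D(t)$ at the level of generating functions, and then apply Lemma \ref{lem:VGinv} with $t$ replaced by $q^2 t$ to collapse $D(q^2 t)\star\tilde G(q^2 t)$ to $1$; this yields $C(-qt)\star\tilde G(q^2 t) = D(t)$.

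There is no genuine obstacle here: the lemma is essentially a corollary of Proposition \ref{prop:CVV} and Lemmas \ref{lem:VGinv}, \ref{lem:VGprop1}. The only points to watch are the bookkeeping of the argument scalings ($q^{\pm 1}$ inside $C$ versus $q^{\pm 2}$ inside $\tilde G$) and the fact that the left-hand equality really does require the relation $[D_i,\tilde G_j]=0$ in order to bring $D(q^2 t)$ and $\tilde G(q^2 t)$ next to each other, whereas the right-hand equality needs only associativity of $\star$ together with the inversion formula.
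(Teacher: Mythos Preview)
Your proof is correct and follows essentially the same route as the paper: substitute $t\mapsto q^{\pm 1}t$ into the factorization $C(-t)=D(qt)\star D(q^{-1}t)$ of Proposition~\ref{prop:CVV}, then cancel a factor of $D$ against $\tilde G$ via Lemma~\ref{lem:VGinv}. Your explicit remark that the left-hand equality requires commuting $D(t)$ past $\tilde G(q^2t)$ (or, equivalently, past $D(q^2t)$) via Lemma~\ref{lem:VGprop1} is a detail the paper leaves implicit.
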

\begin{proof} To verify the above equations, eliminate
$C(-qt)$ and 
$C(-q^{-1}t)$ using
Proposition
\ref{prop:CVV}, and evaluate the result using
Lemma
\ref{lem:VGinv}.
\end{proof}

\begin{theorem} 
\label{prop:CG2}
For $n \in \mathbb N$,
\begin{align}
0 = \sum_{i=0}^n (-1)^i \lbrack 2n-i\rbrack_q C_i  \star \tilde G_{n-i}.
\label{eq:finalRec}
\end{align}
\end{theorem}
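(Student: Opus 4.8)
The plan is to read off the relation directly from Lemma~\ref{prop:CVC} by comparing coefficients of $t^n$, exactly as in the proofs of Theorems~\ref{prop:Gexpl}, \ref{prop:WCG} and Proposition~\ref{prop:CVVform}. Lemma~\ref{prop:CVC} states
\begin{align*}
C(-qt)\star \tilde G(q^2 t) = D(t) = C(-q^{-1}t)\star \tilde G(q^{-2}t),
\end{align*}
so in particular the first and last expressions coincide:
\begin{align*}
C(-qt)\star \tilde G(q^2 t) = C(-q^{-1}t)\star \tilde G(q^{-2}t).
\end{align*}
So the proof amounts to expanding both sides as formal power series in $t$ and equating the coefficient of $t^n$.

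First I would expand using the definitions of $C(t)$ and $\tilde G(t)$. On the left, $C(-qt)=\sum_{m}(-1)^m q^m t^m C_m$ and $\tilde G(q^2 t)=\sum_{k}q^{2k} t^k \tilde G_k$, so the coefficient of $t^n$ is $\sum_{i=0}^n (-1)^i q^i q^{2(n-i)} C_i\star\tilde G_{n-i}=\sum_{i=0}^n (-1)^i q^{2n-i} C_i\star\tilde G_{n-i}$. Similarly, on the right the coefficient of $t^n$ is $\sum_{i=0}^n (-1)^i q^{-i} q^{-2(n-i)} C_i\star\tilde G_{n-i}=\sum_{i=0}^n (-1)^i q^{i-2n} C_i\star\tilde G_{n-i}$. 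Equating these and collecting terms yields
\begin{align*}
\sum_{i=0}^n (-1)^i \bigl(q^{2n-i}-q^{-(2n-i)}\bigr)\, C_i\star \tilde G_{n-i} = 0.
\end{align*}

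Finally I would divide by $q-q^{-1}$, which is nonzero since $q$ is not a root of unity, and use $q^m - q^{-m} = (q-q^{-1})\lbrack m\rbrack_q$ with $m = 2n-i$ to obtain~(\ref{eq:finalRec}). The only point requiring care is the bookkeeping of the powers of $q$ produced by the substitutions $t\mapsto q^{\pm 1}t$ inside $C$ and $t\mapsto q^{\pm 2}t$ inside $\tilde G$; there is no substantive obstacle, as Lemma~\ref{prop:CVC} already carries all of the algebraic weight. As a consistency check, at $n=1$ the relation reads $\lbrack 2\rbrack_q\,\tilde G_1 - C_1 = 0$, which is correct since $C_1 = \lbrack 2\rbrack_q xy = \lbrack 2\rbrack_q\,\tilde G_1$, and at $n=0$ it reads $\lbrack 0\rbrack_q = 0$.
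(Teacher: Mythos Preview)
Your proof is correct and follows exactly the approach of the paper: equate the two outer expressions in Lemma~\ref{prop:CVC} and compare the coefficient of $t^n$ on each side. The paper's proof is terser, simply stating ``compare the coefficient of $t^n$ on either side,'' while you spell out the coefficient extraction and the division by $q-q^{-1}$ explicitly; there is no substantive difference.
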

\begin{proof} By Lemma
\ref{prop:CVC},
\begin{align*}
 C(-qt)\star \tilde G(q^2t) 
= C(-q^{-1}t)\star \tilde G(q^{-2}t).
\end{align*}
In this equation, compare the coefficient of $t^n$ on either side.
\end{proof}

\begin{remark}\rm  Using 
(\ref{eq:finalRec}) we can recursively solve for 
$\lbrace C_n\rbrace_{n=1}^\infty $
in terms of $\lbrace \tilde G_n\rbrace_{n=1}^\infty $, 
 and also
$\lbrace \tilde G_n\rbrace_{n=1}^\infty $ in terms of
$\lbrace C_n\rbrace_{n=1}^\infty$.
\end{remark}

\section{Some comments about Propositions
\ref{prop:attract} and
\ref{prop:GGWW}
}

Consider the equations in Propositions
\ref{prop:attract}  and
\ref{prop:GGWW}. For each equation,
it is natural
to ask what is the common value of each side, in terms
of the standard basis for $\mathbb V$.  In this section
we compute this common value, and give some additional
results of a similar nature.
\medskip

\noindent Near the end of Section 4 we defined the Catalan words
in $\mathbb V$, using the notation $\overline x=1$
and $\overline y=-1$. We now use this notation to define another
kind of word.

\begin{definition}
\label{def:const} 
\rm  A word $v_1v_2\cdots v_n$ in $\mathbb V$ is {\it constrained}
whenever 
$\overline v_1+
\overline v_2+\cdots  + 
\overline v_i \in \lbrace 0,\pm 1\rbrace $ 
for $1 \leq i \leq n-1$ and 
$\overline v_1+
\overline v_2+\cdots  + 
\overline v_n =0$. 
In this case $n$ is even.
\end{definition}

\begin{example}
\label{ex:constrEx}
\rm For $0 \leq n \leq 3$ we give the constrained words
of length $2n$.
\bigskip

\centerline{
\begin{tabular}[t]{c|c}
   $n$  & {\rm constained words of length $2n$}
   \\
   \hline
 $ 0 $  &  $1$
 \\
 $ 1 $  &  $xy, \quad yx$  
 \\
 $ 2 $  &  $xyxy, \quad xyyx, \quad yxxy, \quad yxyx$
 \\
 $ 3 $  &
 $
 xyxyxy, \quad
 xyxyyx, \quad
 xyyxxy, \quad
 xyyxyx, \quad 
 yxxyxy, \quad 
 yxxyyx, \quad 
 yxyxxy, \quad 
 yxyxyx
 $
   \end{tabular}}
\end{example}

\begin{lemma} For $n \in \mathbb N$ there are
$2^n$ constrained words of
length $2n$. These words have the form $b_1b_2\cdots b_n$ with
$b_i \in \lbrace xy, yx\rbrace$ for $1 \leq i \leq n$.
\end{lemma}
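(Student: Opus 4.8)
The plan is to run a parity argument on the partial sums of the values $\overline v_i$. For a word $w = v_1 v_2 \cdots v_{2n}$ in $\mathbb V$ set $s_0 = 0$ and $s_i = \overline v_1 + \overline v_2 + \cdots + \overline v_i$ for $1 \le i \le 2n$. Since each $\overline v_j \in \lbrace 1, -1 \rbrace$ is odd, we have $s_i \equiv i \pmod 2$; in particular $s_i$ is even whenever $i$ is even. This one observation, combined with the defining constraint, is the whole engine of the proof.

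First I would show that every constrained word of length $2n$ has the stated block form. Suppose $w = v_1 v_2 \cdots v_{2n}$ is constrained. For each $k$ with $0 \le k \le n$ I claim $s_{2k} = 0$: this is immediate for $k = 0$, it is part of Definition \ref{def:const} for $k = n$, and for $0 < k < n$ it follows because $s_{2k}$ is even (by the parity remark) and lies in $\lbrace 0, \pm 1 \rbrace$ (by Definition \ref{def:const}, using $1 \le 2k \le 2n - 1$), which forces $s_{2k} = 0$. Consequently, for $1 \le k \le n$,
\[
\overline v_{2k-1} + \overline v_{2k} = s_{2k} - s_{2k-2} = 0,
\]
so $v_{2k-1} \ne v_{2k}$, i.e. the block $b_k := v_{2k-1} v_{2k}$ equals $xy$ or $yx$. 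Thus $w = b_1 b_2 \cdots b_n$ with each $b_k \in \lbrace xy, yx \rbrace$, as claimed.

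Next I would verify the converse, so that the description is exact. Any word $w = b_1 b_2 \cdots b_n$ with $b_k \in \lbrace xy, yx \rbrace$ for $1 \le k \le n$ is constrained: each block contributes $0$ to the running sum, so after an even number of letters the partial sum is $0$ and after an odd number of letters it is $\pm 1$; in either case it lies in $\lbrace 0, \pm 1 \rbrace$, and the total sum (after all $2n$ letters) is $0$. Hence the constrained words of length $2n$ are precisely the products $b_1 b_2 \cdots b_n$ with each $b_k \in \lbrace xy, yx \rbrace$.

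Finally, for the count: the map $\lbrace xy, yx \rbrace^n \to \mathbb V$ sending $(b_1, \ldots, b_n)$ to $b_1 b_2 \cdots b_n$ is injective, since each factor has length $2$ and can be recovered by splitting the word into consecutive pairs of letters. Therefore there are exactly $2^n$ such words, all of them constrained, and they exhaust the constrained words of length $2n$. There is no genuine obstacle in this argument; the only point worth isolating is the parity fact $s_i \equiv i \pmod 2$, which together with the constraint $s_i \in \lbrace 0, \pm 1 \rbrace$ pins down $s_{2k} = 0$ at every even position and thereby produces the block decomposition.
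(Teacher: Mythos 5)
Your proof is correct; the parity observation $s_i\equiv i\pmod 2$ combined with the constraint $s_i\in\lbrace 0,\pm 1\rbrace$ cleanly forces $s_{2k}=0$ at every even position and yields the block decomposition, and the converse and the count $2^n$ follow immediately. The paper states this lemma without proof, and your argument is exactly the routine verification the author left to the reader.
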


\noindent For $n \in \mathbb N$ consider the sum of the
constrained words that have length $2n$.

\begin{lemma} For $n \in \mathbb N$ the above sum is equal to
$(xy+yx)^n$, where the exponent is with respect to the
free product.
\end{lemma}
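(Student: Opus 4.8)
The plan is to expand $(xy+yx)^n$ in the free algebra $\mathbb V$ and to match its terms with the constrained words of length $2n$. By distributivity of the free product,
\[
(xy+yx)^n = \sum b_1 b_2 \cdots b_n,
\]
where the sum ranges over all $n$-tuples $(b_1,b_2,\ldots,b_n)$ with $b_i \in \lbrace xy, yx\rbrace$; there are $2^n$ such tuples, so the right-hand side is a sum of $2^n$ words, each of length $2n$. I would complete the argument in three short steps.

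First, I would verify that each word $w=b_1b_2\cdots b_n$ occurring above is constrained in the sense of Definition \ref{def:const}. Writing $w=v_1v_2\cdots v_{2n}$ and $s_i=\overline v_1+\overline v_2+\cdots+\overline v_i$, the fact that each block $b_j$ is $xy$ or $yx$ gives $\overline{v_{2j-1}}+\overline{v_{2j}}=0$, so $s_{2j}=0$ and $s_{2j-1}=\pm 1$. Hence $s_i\in\lbrace 0,\pm 1\rbrace$ for $1\le i\le 2n-1$ and $s_{2n}=0$, so $w$ is a constrained word of length $2n$.

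Second, I would note that the $2^n$ terms are pairwise distinct: in a word of the form $b_1\cdots b_n$ the blocks occupy the fixed position pairs $\lbrace 1,2\rbrace,\lbrace 3,4\rbrace,\ldots,\lbrace 2n-1,2n\rbrace$, so $b_j$ is read off as the subword $v_{2j-1}v_{2j}$ and distinct tuples give distinct words. Thus $(xy+yx)^n$ is the sum of $2^n$ distinct constrained words of length $2n$. Finally, by the preceding lemma there are exactly $2^n$ constrained words of length $2n$, so these must be precisely the terms of $(xy+yx)^n$, each with coefficient $1$; hence $(xy+yx)^n$ equals the sum of the constrained words of length $2n$, as desired. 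There is no genuine obstacle in this argument; the one point to handle carefully is the last cardinality comparison, which is where the preceding lemma does the real work.
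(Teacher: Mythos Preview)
Your proof is correct and matches the paper's intended reasoning; the paper states this lemma without proof, treating it as an immediate consequence of the preceding lemma and distributivity. Note that your first step is slightly redundant: the preceding lemma already asserts that the constrained words of length $2n$ are \emph{exactly} the words $b_1b_2\cdots b_n$ with $b_i\in\{xy,yx\}$, so once you expand $(xy+yx)^n$ by distributivity the identification is immediate without separately rechecking that each term is constrained.
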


\noindent The following results can be obtained using
Lemma
\ref{lem:prep}
and induction on $n$. The proofs are straightforward and omitted.

\begin{proposition} 
\label{prop:attractEx}
For $n \in \mathbb N$,
\begin{align*}
&
\sum_{k=0}^n  G_{n-k} \star W_{-k} q^{2k-n} = 
\sum_{k=0}^n W_{-k} \star  G_{n-k} q^{n-2k} =
\lbrack 2 \rbrack^n_q (xy+yx)^n x,
\\
&
\sum_{k=0}^n  G_{n-k} \star W_{k+1} q^{n-2k} = 
\sum_{k=0}^n W_{k+1} \star  G_{n-k} q^{2k-n} = 
\lbrack 2 \rbrack^n_q y(xy+yx)^n,
\\
&
\sum_{k=0}^n \tilde G_{n-k} \star W_{-k} q^{n-2k} = 
\sum_{k=0}^n W_{-k} \star \tilde G_{n-k} q^{2k-n} = 
\lbrack 2 \rbrack^n_q  x (xy+yx)^n,
\\
&
\sum_{k=0}^n  \tilde G_{n-k} \star W_{k+1} q^{2k-n} = 
\sum_{k=0}^n W_{k+1} \star  \tilde G_{n-k} q^{n-2k} = 
\lbrack 2 \rbrack^n_q (xy+yx)^n y.
\end{align*}
\end{proposition}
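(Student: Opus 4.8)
The equality of the two $\star$-sums within each of the four displayed lines is exactly Proposition \ref{prop:attract}, so what remains is to evaluate their common value, and the plan is to prove all four lines simultaneously by induction on $n$. Just as in the proof of Proposition \ref{prop:attract}, the automorphism $\sigma$ and the antiautomorphism $S$ of the $q$-shuffle algebra (acting on alternating words as in Lemma \ref{lem:sigSact}) carry line 1 to lines 4 and 3 respectively, and $\sigma S$ carries it to line 2: indeed $\sigma$ fixes $(xy+yx)^n$ since it is an algebra automorphism swapping $x$ and $y$, while $S$ fixes $(xy+yx)^n$ since it is an antiautomorphism fixing $x$ and $y$ (so that $S(xy+yx)=xy+yx$ and $S(w^n)=S(w)^n$); moreover $\sigma$ turns a leading or trailing $x$ into a leading or trailing $y$, and $S$ turns a leading letter into a trailing one. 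Hence in the inductive step it suffices to prove line 1, namely
\[
A_n \;:=\; \sum_{k=0}^n G_{n-k}\star W_{-k}\, q^{2k-n} \;=\; \lbrack 2\rbrack_q^n\,(xy+yx)^n\, x ,
\]
granting the full statement (all four lines) at order $n-1$.

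For $n=0$ each of the four sums is the relevant single letter. For $n=1$, Lemma \ref{lem:stardata2} gives $G_1\star W_0 = G_1\star x = xyx+(1+q^2)yxx$, so $A_1 = q^{-1}(G_1\star W_0)+q\,W_{-1} = \lbrack 2\rbrack_q(xyx+yxx) = \lbrack 2\rbrack_q(xy+yx)x$, as claimed.

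For the inductive step I would peel trailing letters off the summands in two passes, using Lemmas \ref{lem:free} and \ref{lem:prep}. A first pass, using $W_{-n}=\tilde G_n x$ and $G_{m+1}\star W_{-k}=(q^2\,W_{m+1}\star W_{-k}+G_{m+1}\star\tilde G_k)x$, writes $A_n=Z_n\,x$ with
\[
Z_n = q^n\tilde G_n + \sum_{k=0}^{n-1}\bigl(q^{2k-n+2}\,W_{n-k}\star W_{-k}+q^{2k-n}\,G_{n-k}\star\tilde G_k\bigr).
\]
Since $q^n\tilde G_n$ is the $k=0$ term of $\sum_{k=0}^n q^{n-2k}\,G_k\star\tilde G_{n-k}$, the substitution (\ref{eq:GGWW1}) of Proposition \ref{prop:GGWW} rewrites the $G\star\tilde G$ part of $Z_n$ as $\sum_{k=0}^{n-1}q^{n-2k}\,W_{-k}\star W_{n-k}$, so that $Z_n$ becomes a $q$-weighted sum of the products $W_{-k}\star W_{n-k}$ and $W_{n-k}\star W_{-k}$ for $0\le k\le n-1$. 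A second pass, peeling the trailing letter from each of these via the two ``$W\star W$'' relations of Lemma \ref{lem:prep}, namely $W_{-k}\star W_{\ell+1}=q^{-2}(\tilde G_k\star W_{\ell+1})x+(W_{-k}\star G_\ell)y$ and $W_{k+1}\star W_{-\ell}=q^{-2}(G_k\star W_{-\ell})y+(W_{k+1}\star\tilde G_\ell)x$, yields $Z_n=R\,x+T\,y$, where $R$ is a $q$-weighted sum of $\tilde G_k\star W_{n-k}$ and $W_{n-k}\star\tilde G_k$, and $T$ a $q$-weighted sum of $W_{-k}\star G_{n-k-1}$ and $G_{n-k-1}\star W_{-k}$. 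A reindexing $k\mapsto n-1-k$ now shows that $R$ equals $q^{-1}$ times one and $q$ times the other of the two $\star$-orderings of the order-$(n-1)$ instance of line 4 (these orderings being equal by Proposition \ref{prop:attract}), and $T$ similarly in terms of the order-$(n-1)$ instance of line 1; since $q+q^{-1}=\lbrack 2\rbrack_q$, the inductive hypothesis gives $R=\lbrack 2\rbrack_q^n(xy+yx)^{n-1}y$ and $T=\lbrack 2\rbrack_q^n(xy+yx)^{n-1}x$. Hence $Z_n=\lbrack 2\rbrack_q^n\bigl((xy+yx)^{n-1}yx+(xy+yx)^{n-1}xy\bigr)=\lbrack 2\rbrack_q^n(xy+yx)^n$, so $A_n=Z_n x$ has the asserted form; lines 4, 3 and 2 then follow on applying $\sigma$, $S$ and $\sigma S$.

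The argument is routine, and the only step that really demands attention is the bookkeeping of the powers of $q$ — carrying them through the two applications of Lemma \ref{lem:prep}, the substitution from Proposition \ref{prop:GGWW}, and the reindexings, and verifying that everything reassembles so that exactly one factor $\lbrack 2\rbrack_q$ and one free factor $(xy+yx)$ are produced per inductive step. This is presumably why the paper records the result but leaves out the proof.
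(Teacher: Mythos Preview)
Your argument is correct, and it follows the route the paper indicates (Lemma~\ref{lem:prep} plus induction on~$n$, together with the $\sigma$, $S$ symmetries to reduce to one line). The one substantive difference is that, after the first pass, you invoke Proposition~\ref{prop:GGWW} (equation~(\ref{eq:GGWW1})) to convert the $G\star\tilde G$ block into a $W^-\star W^+$ block; this lets you close the induction for Proposition~\ref{prop:attractEx} by itself, feeding back only into line~4 and line~1 at order $n-1$. The paper's hint (``use Lemma~\ref{lem:prep} and induction on~$n$'') more likely points at a \emph{joint} induction across Propositions~\ref{prop:attractEx}--\ref{prop:GGEx}: peeling a single letter from the sums in Proposition~\ref{prop:attractEx} already produces the sums of Propositions~\ref{prop:GGWWE}, \ref{prop:WWEx}, \ref{prop:GGEx} at the same or lower order, so one proves all of them together without needing to quote Proposition~\ref{prop:GGWW}. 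Your route is a little slicker in that it isolates this one proposition, at the cost of importing a result from Section~8; the paper's implied route is more self-contained but requires carrying several identities through the induction simultaneously.
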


\begin{proposition} 
\label{prop:GGWWE}
For $n\geq 1$,
\begin{align*}
&
\sum_{k=0}^n  G_{k} \star \tilde G_{n-k} q^{n-2k}
= q
\sum_{k=0}^{n-1} W_{-k} \star W_{n-k} q^{n-1-2k} 
= \lbrack 2 \rbrack^{n-1}_q (xy+yx)^{n-1} (qxy+q^{-1}yx), 
\\
&
\sum_{k=0}^n G_{k} \star \tilde G_{n-k} q^{2k-n}
= q
\sum_{k=0}^{n-1} W_{n-k} \star W_{-k} q^{n-1-2k}
= \lbrack 2 \rbrack^{n-1}_q (q^{-1}xy+qyx)(xy+yx)^{n-1},
\\
&
\sum_{k=0}^n  \tilde G_{k} \star  G_{n-k} q^{n-2k}
= q
\sum_{k=0}^{n-1} W_{n-k} \star W_{-k} q^{2k+1-n}
= \lbrack 2 \rbrack^{n-1}_q (xy+yx)^{n-1} (q^{-1}xy+qyx),
\\
&
\sum_{k=0}^n \tilde G_{k} \star G_{n-k} q^{2k-n}
= q
\sum_{k=0}^{n-1} W_{-k} \star W_{n-k} q^{2k+1-n}
= \lbrack 2 \rbrack^{n-1}_q  (qxy+q^{-1}yx)(xy+yx)^{n-1}. 
\end{align*}
\end{proposition}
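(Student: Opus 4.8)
The four displayed lines of Proposition~\ref{prop:GGWWE} each assert that a $q$-shuffle sum---already identified in Proposition~\ref{prop:GGWW} with a second $q$-shuffle sum---has a prescribed value in the standard basis of $\mathbb V$; thus only these standard-basis evaluations are new. The plan is to prove the first line, namely
\[
q\sum_{k=0}^{n-1} W_{-k}\star W_{n-k}\, q^{n-1-2k}=\lbrack 2\rbrack_q^{n-1}(xy+yx)^{n-1}(qxy+q^{-1}yx),
\]
and then to obtain lines 2, 3, 4 by applying $S$, $\sigma$, $\sigma S$ respectively, exactly as in the proof of Proposition~\ref{prop:GGWW}. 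This works because each of these maps permutes $\{xy,yx\}$, so it fixes $xy+yx$ and sends $qxy+q^{-1}yx$ to $q^{-1}xy+qyx$ (under $S$ or $\sigma$) or to itself (under $\sigma S$); moreover $S$ and $\sigma S$ are antiautomorphisms of the free algebra $\mathbb V$, so they reverse free products, which puts the quadratic factor on the left, as it appears in lines 2 and 4.

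To evaluate the first line I would reduce it to Proposition~\ref{prop:attractEx} by a single application of Lemma~\ref{lem:prep}. For $0\le k\le n-1$, the identity for $W_{-k}\star W_{\ell+1}$ in Lemma~\ref{lem:prep}, taken with $\ell=n-k-1$, reads
\[
W_{-k}\star W_{n-k}=q^{-2}\bigl(\tilde G_k\star W_{n-k}\bigr)x+\bigl(W_{-k}\star G_{n-k-1}\bigr)y,
\]
where the trailing letters are attached by the free product. Multiplying by $q^{n-1-2k}$ and summing over $k$ writes $\sum_{k=0}^{n-1}q^{n-1-2k}W_{-k}\star W_{n-k}$ in the form $q^{-2}Ax+By$, where $A=\sum_{k=0}^{n-1}q^{n-1-2k}\tilde G_k\star W_{n-k}$ and $B=\sum_{k=0}^{n-1}q^{n-1-2k}W_{-k}\star G_{n-k-1}$. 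After the reindexing $k\mapsto n-1-k$, the sum $A$ becomes the left side of the fourth line of Proposition~\ref{prop:attractEx} at level $n-1$, so $A=\lbrack 2\rbrack_q^{n-1}(xy+yx)^{n-1}y$; and $B$ is, with no reindexing, the $W_{-\bullet}\star G_{\bullet}$ form of the first line of Proposition~\ref{prop:attractEx} at level $n-1$, so $B=\lbrack 2\rbrack_q^{n-1}(xy+yx)^{n-1}x$. Therefore $\sum_{k=0}^{n-1}q^{n-1-2k}W_{-k}\star W_{n-k}=\lbrack 2\rbrack_q^{n-1}(xy+yx)^{n-1}(q^{-2}yx+xy)$, and multiplying by $q$ gives the first line; the boundary indices $k=0$ and $k=n-1$ cause no trouble because $G_0=\tilde G_0=1$.

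The step requiring the most care is the matching of $q$-exponents: one must check that after the reindexing the weights $q^{n-1-2k}$ here coincide with those in Proposition~\ref{prop:attractEx}, and that the correct one of the two equivalent forms in Lemma~\ref{lem:prep} has been used---the one producing $\tilde G_k\star W_{n-k}$ and $W_{-k}\star G_{n-k-1}$ rather than $G_k\star W_{n-k}$ and $W_{-k}\star\tilde G_{n-k-1}$, since only the former two line up with Proposition~\ref{prop:attractEx}. Alternatively, one may follow the route indicated in the paper and prove all four lines directly by induction on $n$ using Lemma~\ref{lem:prep} alone, bypassing Proposition~\ref{prop:attractEx}; there the base case is $W_0\star W_1=xy+q^{-2}yx$ from (\ref{eq:xyprod}), the inductive step uses the same splitting into an $x$-part and a $y$-part, and the only genuine obstacle is the bookkeeping of the telescoping.
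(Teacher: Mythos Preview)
Your proposal is correct. The first-line computation via Lemma~\ref{lem:prep} and reduction to Proposition~\ref{prop:attractEx} checks out: the identity
\[
W_{-k}\star W_{n-k}=q^{-2}\bigl(\tilde G_k\star W_{n-k}\bigr)x+\bigl(W_{-k}\star G_{n-k-1}\bigr)y
\]
is exactly the second form of the $W_{-k}\star W_{\ell+1}$ line in Lemma~\ref{lem:prep}, the reindexing of $A$ lands on the fourth line of Proposition~\ref{prop:attractEx} at level $n-1$, and $B$ is the second form of the first line of that proposition at level $n-1$, both of which are stated for all $n\in\mathbb N$ including $n=0$. The symmetry argument for lines 2--4 via $S$, $\sigma$, $\sigma S$ is also correct, since these maps are (anti)automorphisms of both the free algebra and the $q$-shuffle algebra, and they permute $\{xy,yx\}$ as you describe.

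The paper's own route, indicated just before the proposition as ``Lemma~\ref{lem:prep} and induction on $n$,'' would carry out a direct induction rather than invoke Proposition~\ref{prop:attractEx}. Your reduction is a mild but genuine streamlining: it reuses the induction already packaged in Proposition~\ref{prop:attractEx} instead of repeating it, at the cost of making Proposition~\ref{prop:GGWWE} depend on its sibling in the same section. Either route is fine; yours is arguably cleaner once Proposition~\ref{prop:attractEx} is in hand.
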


\begin{proposition} 
\label{prop:WWEx}
For $n \in \mathbb N$,
\begin{align*}
&
\sum_{k=0}^n  W_{-k} \star W_{k-n} q^{2k-n} = 
\sum_{k=0}^n W_{-k} \star  W_{k-n} q^{n-2k} =
q \lbrack 2 \rbrack^{n+1}_q x(xy+yx)^n x,
\\
&
\sum_{k=0}^n  W_{k+1} \star W_{n-k+1} q^{2k-n} = 
\sum_{k=0}^n W_{k+1} \star  W_{n-k+1} q^{n-2k} =
q \lbrack 2 \rbrack^{n+1}_q y(xy+yx)^n y.
\end{align*}
\end{proposition}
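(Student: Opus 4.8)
The plan is to reduce the second displayed identity to the first via the automorphism $\sigma$, to reduce each ``$q^{2k-n}$ versus $q^{n-2k}$'' equality to the commutativity relation (\ref{eq:3p4vv}), and to prove the first identity by feeding the first line of Lemma \ref{lem:prep} into Proposition \ref{prop:attractEx}.

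\textbf{Step 1.} To see that the two sums on each left-hand side agree, substitute $k\mapsto n-k$ in $\sum_{k=0}^n W_{-k}\star W_{k-n}\,q^{2k-n}$; this yields $\sum_{k=0}^n W_{k-n}\star W_{-k}\,q^{n-2k}$, and since $W_{k-n}$ and $W_{-k}$ both lie in the family $\lbrace W_{-i}\rbrace_{i\in\mathbb N}$ they commute under $\star$ by (\ref{eq:3p4vv}), so the sum equals $\sum_{k=0}^n W_{-k}\star W_{k-n}\,q^{n-2k}$. The same argument, now using the commutativity of $\lbrace W_{k+1}\rbrace_{k\in\mathbb N}$ from (\ref{eq:3p4vv}), handles the second equation.

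\textbf{Step 2.} For the first identity, apply the second expression on the first line of Lemma \ref{lem:prep} with $\ell=n-k$, namely $W_{-k}\star W_{k-n}=q^{2}(\tilde G_k\star W_{k-n})x+(W_{-k}\star\tilde G_{n-k})x$, multiply by $q^{2k-n}$, and sum over $0\le k\le n$. The left-hand side of the first identity becomes $q^{2}\bigl(\sum_{k=0}^n\tilde G_k\star W_{k-n}\,q^{2k-n}\bigr)x+\bigl(\sum_{k=0}^n W_{-k}\star\tilde G_{n-k}\,q^{2k-n}\bigr)x$. By the third line of Proposition \ref{prop:attractEx} the second inner sum equals $\lbrack 2\rbrack_q^{n}\,x(xy+yx)^n$; and reindexing $k\mapsto n-k$ in the first inner sum turns it into $\sum_{k=0}^n\tilde G_{n-k}\star W_{-k}\,q^{n-2k}$, which by the other half of that same third line also equals $\lbrack 2\rbrack_q^{n}\,x(xy+yx)^n$. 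Summing the two contributions and using $q^{2}+1=q\lbrack 2\rbrack_q$ gives $q\lbrack 2\rbrack_q^{n+1}\,x(xy+yx)^n x$, as claimed. (Alternatively, one can prove this identity directly by induction on $n$ using Lemma \ref{lem:prep}, as in Propositions \ref{prop:attractEx} and \ref{prop:GGWWE}.)

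\textbf{Step 3.} For the second identity, apply the $q$-shuffle algebra automorphism $\sigma$ of Lemma \ref{lem:Vsym} to the first identity: by Lemma \ref{lem:sigSact}(i) it sends $W_{-k}\mapsto W_{k+1}$ and $W_{k-n}=W_{-(n-k)}\mapsto W_{n-k+1}$, while interchanging $x$ and $y$, so $x(xy+yx)^n x\mapsto y(xy+yx)^n y$. I do not expect a genuine obstacle; the one point needing care is the bookkeeping in Step 2 — after the reindexing $k\mapsto n-k$, checking that the resulting power of $q$ matches the correct one of the two expressions on the third line of Proposition \ref{prop:attractEx}.
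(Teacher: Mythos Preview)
Your argument is correct. Step 1 is a clean use of the reindexing $k\mapsto n-k$ together with the commutativity relations (\ref{eq:3p4vv}); Step 2 correctly applies the first line of Lemma \ref{lem:prep} and then matches both resulting inner sums to the third line of Proposition \ref{prop:attractEx} (the reindexing and $q$-exponents line up exactly as you claim, and $q^{2}+1=q\lbrack 2\rbrack_q$ finishes it); Step 3 is the expected symmetry argument via $\sigma$.

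The paper's stated route for all of Propositions \ref{prop:attractEx}--\ref{prop:GWAlt} is a direct induction on $n$ using Lemma \ref{lem:prep}, with the details omitted. Your approach differs in that you do not run a fresh induction for this proposition but instead reduce it to the already-established Proposition \ref{prop:attractEx}. This is a mild but genuine shortcut: it exploits the fact that the right-hand identities in Lemma \ref{lem:prep} peel off a free-product factor of $x$, so that the $W\star W$ sums decompose into two $\tilde G\star W$ and $W\star\tilde G$ sums of exactly the shape already evaluated in Proposition \ref{prop:attractEx}. The paper's direct induction would presumably re-derive that structure at each step, whereas you invoke it once. Both approaches rest on Lemma \ref{lem:prep} as the essential tool.
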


\begin{proposition} 
\label{prop:GGEx}
For $n \geq 1$,
\begin{align*}
&
\sum_{k=0}^n  G_{k} \star G_{n-k} q^{2k-n} = 
\sum_{k=0}^n G_{k} \star  G_{n-k} q^{n-2k} =
 \lbrack 2 \rbrack^{n}_q y(xy+yx)^{n-1} x,
\\
&
\sum_{k=0}^n  \tilde G_{k} \star \tilde G_{n-k} q^{2k-n} = 
\sum_{k=0}^n \tilde G_{k} \star  \tilde G_{n-k} q^{n-2k} =
 \lbrack 2 \rbrack^{n}_q x(xy+yx)^{n-1} y. 
\end{align*}
\end{proposition}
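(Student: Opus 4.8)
The plan is to reduce both lines to a single identity and then prove that by induction on $n$, peeling one letter off each summand by means of Lemma~\ref{lem:prep} and (\ref{eq:uvcirc}), (\ref{eq:uvcirc2}).

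First I would dispose of the symmetry. By Lemma~\ref{lem:Vsym} the map $\sigma$ is an automorphism of the $q$-shuffle algebra; it fixes the scalar $\lbrack 2\rbrack_q$, interchanges $x$ and $y$, and by Lemma~\ref{lem:sigSact}(i) interchanges $G_k$ with $\tilde G_k$. Applying $\sigma$ to the first displayed line therefore produces the second, so it suffices to treat the $G$-version. For the first equality in that line, note from (\ref{eq:3p10vv}) that $\lbrace G_k\rbrace_{k\geq 1}$ mutually commute under $\star$; since $G_0=1$ commutes with everything, all $\lbrace G_k\rbrace_{k\in\mathbb N}$ mutually commute, so the substitution $k\mapsto n-k$ gives $\sum_{k=0}^n G_k\star G_{n-k}\,q^{2k-n}=\sum_{k=0}^n G_k\star G_{n-k}\,q^{n-2k}$. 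Thus everything reduces to the identity $\Gamma_n:=\sum_{k=0}^n G_k\star G_{n-k}\,q^{2k-n}=\lbrack 2\rbrack_q^n\,y(xy+yx)^{n-1}x$.

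The base case $n=1$ is the computation $\Gamma_1=q^{-1}G_1+qG_1=\lbrack 2\rbrack_q\,yx$. For $n\geq 2$ I would peel a trailing $x$ and then a leading $y$ off $\Gamma_n$. By Lemma~\ref{lem:free}, $G_j=W_jx$ for $j\geq 1$, so (\ref{eq:uvcirc2}) gives $G_k\star G_{n-k}=(G_k\star W_{n-k})x+(W_k\star G_{n-k})x$ for $1\leq k\leq n-1$ (the exponent in (\ref{eq:uvcirc2}) vanishes because $G_{n-k}$ has equal $x$- and $y$-degree), while the boundary terms $k=0$ and $k=n$ contribute $(q^n+q^{-n})W_nx$. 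Hence $\Gamma_n=B_nx$ with $B_n=(q^n+q^{-n})W_n+\sum_{k=1}^{n-1}q^{2k-n}\bigl(G_k\star W_{n-k}+W_k\star G_{n-k}\bigr)$. Since $B_n$ is a combination of $W_n$ and $\star$-products of alternating words each beginning with $y$, every standard-basis word occurring in $B_n$ begins with $y$, so $B_n=y\,B_n'$. Using $W_n=y\tilde G_{n-1}$, $G_k=yW_{-(k-1)}$, $W_k=y\tilde G_{k-1}$ (Lemma~\ref{lem:free}) together with (\ref{eq:uvcirc}), the resulting $B_n'$ is a linear combination of $(q^n+q^{-n})\tilde G_{n-1}$ and of sums of the four shapes $\tilde G_i\star G_j$, $G_i\star\tilde G_j$, $W_{-i}\star W_{j+1}$, $W_{i+1}\star W_{-j}$ (with $i+j\in\lbrace n-1,n-2\rbrace$) --- these are precisely the sums evaluated in Proposition~\ref{prop:GGWW}, equivalently in Proposition~\ref{prop:GGWWE}. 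Substituting their closed forms into $B_n'$ and simplifying should give $B_n'=\lbrack 2\rbrack_q^n(xy+yx)^{n-1}$, hence $\Gamma_n=y\,B_n'\,x$, as claimed.

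The one genuinely delicate point is the bookkeeping. The $q$-exponents produced by the two peelings are not literally those appearing in Propositions~\ref{prop:GGWW} and \ref{prop:GGWWE}: one must first rescale the weight $q^{2k-n}$ into the normalization used there and extract the boundary terms of each sum before the known formulas apply, and then check that the surviving free-algebra words assemble into $(xy+yx)^{n-1}$. This is routine but is where all the work lies. As an alternative one can pass to generating functions: Lemma~\ref{lem:VGinv} shows $\tilde G(t)=D(t)^{-1}$ with respect to $\star$, and since the $D_n$ mutually commute (Lemma~\ref{lem:VGprop1}), Proposition~\ref{prop:CVV} gives $D(qt)\star D(q^{-1}t)=D(q^{-1}t)\star D(qt)=C(-t)$, whence $\tilde G(qt)\star\tilde G(q^{-1}t)=C(-t)^{-1}$; thus the $\tilde G$-form of the proposition is equivalent to an explicit formula for the $\star$-inverse of $C(-t)$ in the standard basis. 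This route is appealing but presupposes such a formula, so the inductive argument above seems the more direct.
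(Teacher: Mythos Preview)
Your approach is correct and follows exactly the route the paper indicates: the paper states that Propositions~\ref{prop:attractEx}--\ref{prop:GWAlt} ``can be obtained using Lemma~\ref{lem:prep} and induction on $n$'' and then omits the details, so your peeling argument via (\ref{eq:uvcirc2}) and (\ref{eq:uvcirc}) is precisely what is intended. Your reduction of the second line to the first via $\sigma$, and of the first equality to commutativity of the $G_k$, is clean.

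Two small remarks. First, what you call ``induction on $n$'' is really a direct reduction to Proposition~\ref{prop:GGWWE} at level $n-1$: once you have peeled off $x$ on the right and $y$ on the left, you never invoke $\Gamma_{n-1}$, only the closed forms from the previous proposition. Second, the bookkeeping you flag as ``delicate'' does in fact close up cleanly. After absorbing the two boundary terms $q^{\pm n}\tilde G_{n-1}$ into the $G\star\tilde G$ and $\tilde G\star G$ sums (so as to restore their full range $0\le k\le n-1$), each of the four sums in $B_n'$ matches one of the lines in Proposition~\ref{prop:GGWWE} with $n$ replaced by $n-1$, up to a global factor of $q^{\pm 1}$. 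Summing the four resulting expressions gives
\[
[2]_q^{n-2}\bigl(q^{-1}(q^{-1}xy+qyx)+q(qxy+q^{-1}yx)+q^{-1}(qxy+q^{-1}yx)+q(q^{-1}xy+qyx)\bigr)(xy+yx)^{n-2},
\]
and the bracket equals $(q+q^{-1})^2(xy+yx)=[2]_q^2(xy+yx)$, yielding $B_n'=[2]_q^n(xy+yx)^{n-1}$ as you wanted.
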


\begin{proposition} 
\label{prop:GWAlt}
For $n \geq 1$,
\begin{align*}
&
\sum_{k=0}^n  G_{n-k} \star W_{-k} q^{n-2k} = 
 q \lbrack 2 \rbrack^{n}_q (q^{-1}xy+qyx)(xy+yx)^{n-1} x,
\\
& \qquad \qquad  
\sum_{k=0}^n   W_{-k} \star  G_{n-k} q^{2k-n} = 
q^{-1} \lbrack 2 \rbrack^{n}_q (qxy+q^{-1}yx)(xy+yx)^{n-1} x,
\\
&\sum_{k=0}^n  G_{n-k} \star W_{k+1} q^{2k-n} = 
 q^{-1} \lbrack 2 \rbrack^{n}_q y(xy+yx)^{n-1} (qxy+q^{-1}yx),
\\
& \qquad \qquad 
\sum_{k=0}^n   W_{k+1} \star G_{n-k} q^{n-2k} = 
q \lbrack 2 \rbrack^{n}_q y(xy+yx)^{n-1} (q^{-1}xy+qyx),
\\
&\sum_{k=0}^n \tilde G_{n-k} \star W_{-k} q^{2k-n} = 
 q^{-1} \lbrack 2 \rbrack^{n}_q x(xy+yx)^{n-1} (q^{-1}xy+qyx),
\\
& \qquad \qquad  
\sum_{k=0}^n   W_{-k} \star \tilde G_{n-k} q^{n-2k} = 
q \lbrack 2 \rbrack^{n}_q x(xy+yx)^{n-1} (qxy+q^{-1}yx),
\\
&
\sum_{k=0}^n \tilde G_{n-k} \star W_{k+1} q^{n-2k} = 
 q \lbrack 2 \rbrack^{n}_q (qxy+q^{-1}yx)(xy+yx)^{n-1} y,
\\
& \qquad \qquad  
\sum_{k=0}^n   W_{k+1} \star \tilde G_{n-k} q^{2k-n} = 
q^{-1} \lbrack 2 \rbrack^{n}_q (q^{-1}xy+qyx)(xy+yx)^{n-1}y.
\end{align*}
\end{proposition}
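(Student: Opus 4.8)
The plan is to verify each of the eight identities by ``peeling off'' the outermost letter of the alternating words on the left‑hand side, using the decompositions of Lemma~\ref{lem:prep} together with the factorizations $W_{-n}=xG_n=\tilde G_n x$ from Lemma~\ref{lem:free}. First I would cut the count down from eight to two: applying the automorphism $\sigma$, the antiautomorphism $S$, and $\sigma S$ to a given identity, and invoking Lemma~\ref{lem:sigSact}(i),(ii), produces three further identities. Here $\sigma$ fixes $\lbrack 2\rbrack_q$ and preserves the order of the free product while swapping $x\leftrightarrow y$, and $S$ fixes $x$, $y$, $\lbrack 2\rbrack_q$ but reverses the order of the free product, so that $\sigma(xy+yx)=xy+yx$, $S(xy+yx)=xy+yx$, and $S(q^{-1}xy+qyx)=qxy+q^{-1}yx$. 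A short check then shows that $\sigma$, $S$, $\sigma S$ carry the first identity to the seventh, sixth, and fourth, and the second identity to the eighth, fifth, and third, so it suffices to establish the first two.

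For the first identity I would split off the $k=n$ summand $G_0\star W_{-n}\,q^{-n}=q^{-n}xG_n$, and for $0\leq k\leq n-1$ rewrite $G_{n-k}\star W_{-k}=y\bigl(W_{-(n-1-k)}\star W_{-k}\bigr)+x\bigl(G_{n-k}\star G_k\bigr)$ by Lemma~\ref{lem:prep}. Since the stray term $q^{-n}xG_n$ is exactly the $k=n$ term missing from a full $G$-$G$ sum, this gives
\[
\sum_{k=0}^n G_{n-k}\star W_{-k}\,q^{n-2k}
= y\sum_{k=0}^{n-1}\bigl(W_{-(n-1-k)}\star W_{-k}\bigr)q^{n-2k}
+ x\sum_{k=0}^{n}\bigl(G_{n-k}\star G_k\bigr)q^{n-2k}.
\]
By Proposition~\ref{prop:GGEx} the second sum equals $\lbrack 2\rbrack_q^{n}\,y(xy+yx)^{n-1}x$, and by Proposition~\ref{prop:WWEx} (after the change of variables $k\mapsto n-1-k$) the first sum equals $q^2\lbrack 2\rbrack_q^{n}\,x(xy+yx)^{n-1}x$. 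Substituting and collecting terms in the free algebra $\mathbb V$ yields $\lbrack 2\rbrack_q^{n}(xy+q^2yx)(xy+yx)^{n-1}x=q\lbrack 2\rbrack_q^{n}(q^{-1}xy+qyx)(xy+yx)^{n-1}x$, which is the first identity. The second identity would be handled the same way, now using $W_{-k}\star G_{n-k}=x\bigl(G_k\star G_{n-k}\bigr)+q^{-2}y\bigl(W_{-k}\star W_{-(n-1-k)}\bigr)$ from Lemma~\ref{lem:prep} and splitting off $W_{-n}\star G_0\,q^{n}=q^{n}xG_n$; the two reduced sums are again evaluated by Propositions~\ref{prop:GGEx} and~\ref{prop:WWEx}, producing $q^{-1}\lbrack 2\rbrack_q^{n}(qxy+q^{-1}yx)(xy+yx)^{n-1}x$.

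I expect the only real obstacle to be clerical bookkeeping: keeping the powers of $q$ aligned through the re‑indexings $k\mapsto n-k$ and $k\mapsto n-1-k$, noting that the $\lbrack 2\rbrack_q$-exponent ``$n+1$'' appearing in Proposition~\ref{prop:WWEx} becomes ``$n$'' here because the inner $W$-pairs carry length parameter $n-1$, and checking in each case that the boundary summand coming from $G_0=1$ or $\tilde G_0=1$ is exactly the term needed to complete a full sum. No fresh induction is required, since Propositions~\ref{prop:attractEx}--\ref{prop:GGEx} are already available; alternatively, one could prove all eight identities directly by this same peeling procedure, calling on whichever of Propositions~\ref{prop:attractEx}, \ref{prop:GGWWE}, \ref{prop:WWEx}, \ref{prop:GGEx} is relevant in each instance.
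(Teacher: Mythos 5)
Your proposal is correct and follows the route the paper intends: the paper's (omitted) proof is "use Lemma \ref{lem:prep} and induction on $n$," and your argument is exactly the Lemma \ref{lem:prep} peeling step, with the induction replaced by a direct appeal to the already-established Propositions \ref{prop:WWEx} and \ref{prop:GGEx} together with the standard $\sigma$, $S$, $\sigma S$ symmetry reduction used throughout the paper. I checked the two base computations (the boundary summand completing the $G$-$G$ sum, the re-indexing in Proposition \ref{prop:WWEx}, and the final factorizations $xy+q^{\pm 2}yx=q^{\pm 1}(q^{\mp 1}xy+q^{\pm 1}yx)$) and the symmetry bookkeeping; all are accurate.
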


\section{Directions for future research}

\begin{problem}
\label{prob:123}
\rm Determine if the
relations in 
Proposition \ref{prop:rel3} can be obtained 
directly
from the relations in
Propositions
\ref{prop:rel1},
\ref{prop:rel2}.
\end{problem}

\begin{problem}
\label{prob:ok}
\rm
Consider the algebra $\mathcal U^+_q$ defined by 
generators
(\ref{eq:WWGG}) subject to the relations in
Propositions \ref{prop:rel1},
\ref{prop:rel2},
\ref{prop:rel3}. By those propositions
the algebra $\mathcal U^+_q$ is a homomorphic preimage of
$U^+_q$. Determine the center of $\mathcal U^+_q$
and the kernel of the homomorphism.
\end{problem}

\begin{problem}
\label{prob:WWGG}
\rm  Find some generators
$\lbrace  W_{-k}\rbrace_{k=0}^\infty$,
$\lbrace  W_{k+1}\rbrace_{k=0}^\infty$,
$\lbrace  G_{k+1}\rbrace_{k=0}^\infty$,
$\lbrace  {\tilde G}_{k+1}\rbrace_{k=0}^\infty$
for $\mathcal O_q$ that satisfy the relations in
\cite[Definition~3.1]{basnc}.
The papers
\cite{basBel},
\cite{pospart},
\cite{lusztigaut},
\cite{pbw},
\cite{boxq},
\cite{z2z2z2}
might be helpful in this direction.
\end{problem}

\begin{problem} \rm
In \cite{BK}
Baseilhac and Kolb
obtained a PBW basis for $\mathcal O_q$ that involves some
elements
\begin{align}
\label{eq:kolb}
\lbrace B_{n\delta+\alpha_0}\rbrace_{n=0}^\infty, \quad
\lbrace B_{n\delta+\alpha_1}\rbrace_{n=0}^\infty, \quad
\lbrace B_{n\delta}\rbrace_{n=1}^\infty.
\end{align}
This PBW basis is roughly
analogous to the PBW basis for $U^+_q$ given in
   (\ref{eqUq:PBWintro}).
Find the relationship between the elements 
(\ref{eq:kolb})
and
the $\mathcal O_q$ generators in Problem
\ref{prob:WWGG}.
We expect that the relationship generalizes the results
in Section 11 of the present paper.
\end{problem}

\section{Acknowledgment} 
The author thanks Pascal Baseilhac, Samuel Belliard,
Jonas Hartwig,
Jae-ho Lee, 
Kazumasa Nomura,
and Travis Scrimshaw for valuable discussions about this paper and
related topics.

\section{Appendix A: Commutator relations for alternating words, part I}

In this appendix we give a reformulation of Lemma
\ref{lem:transGW}.

\begin{lemma}
\label{lem:rr1S}
For $i,j\in \mathbb N$ the following holds in the $q$-shuffle 
algebra $\mathbb V$.
\begin{enumerate}
\item[\rm (i)]  For $i\leq j$,
\begin{align*}
 G_i \star W_{-j} &=
  W_{-j}\star G_i + 
(1-q^{2})\sum_{\ell=1}^i   W_{-\ell-j}\star G_{i-\ell}  
+ (q^{2}-1)\sum_{\ell=1}^i  W_{\ell-i}\star  G_{j+\ell},
\\
W_{-j}\star G_i &=
 G_i \star W_{-j}  + 
(1-q^{-2})\sum_{\ell=1}^i G_{i-\ell}\star W_{-\ell-j}
+ (q^{-2}-1)\sum_{\ell=1}^i G_{j+\ell} \star W_{\ell-i}
\end{align*}
\noindent and
\begin{align*}
G_i\star W_{j+1} &=
 W_{j+1} \star  G_i + 
(1-q^{-2})\sum_{\ell=1}^i W_{\ell+j+1}\star  G_{i-\ell}
+ (q^{-2}-1)\sum_{\ell=1}^i W_{i-\ell+1} \star  G_{j+\ell},
\\
 W_{j+1} \star  G_i &=
 G_i\star W_{j+1} + 
(1-q^{2})\sum_{\ell=1}^i  G_{i-\ell} \star W_{\ell+j+1}
+ (q^{2}-1)\sum_{\ell=1}^i  G_{j+\ell} \star W_{i-\ell+1}
\end{align*}
\noindent and
\begin{align*}
\tilde G_i\star W_{-j} &=
 W_{-j} \star \tilde G_i + 
(1-q^{-2})\sum_{\ell=1}^i W_{-\ell-j}\star \tilde G_{i-\ell}
+ (q^{-2}-1)\sum_{\ell=1}^i W_{\ell-i} \star \tilde G_{j+\ell},
\\
 W_{-j} \star \tilde G_i &=
\tilde G_i\star W_{-j} + 
(1-q^{2})\sum_{\ell=1}^i  \tilde G_{i-\ell} \star W_{-\ell-j}
+ (q^{2}-1)\sum_{\ell=1}^i  \tilde G_{j+\ell} \star W_{\ell-i}
\end{align*}
\noindent and
\begin{align*}
\tilde G_i \star W_{j+1} &=
  W_{j+1}\star \tilde G_i + 
(1-q^{2})\sum_{\ell=1}^i  W_{\ell+j+1} \star \tilde G_{i-\ell}  
+ (q^{2}-1)\sum_{\ell=1}^i W_{i-\ell+1}\star  \tilde G_{j+\ell},
\\
W_{j+1}\star \tilde G_i &=
\tilde G_i \star W_{j+1}  + 
(1-q^{-2})\sum_{\ell=1}^i \tilde G_{i-\ell}\star W_{\ell+j+1}
+ (q^{-2}-1)\sum_{\ell=1}^i \tilde G_{j+\ell} \star W_{i-\ell+1}.
\end{align*}
\item[\rm (ii)]  For $i>j$,
\begin{align*}
G_i \star W_{-j}  &=
q^{2} W_{-j}\star  G_i + 
(1-q^{2}) \sum_{\ell=0}^j W_{-i-\ell}\star  G_{j-\ell}
+
(q^{2}-1)\sum_{\ell=1}^j W_{\ell-j}\star G_{i+\ell},
\\
W_{-j}\star  G_i  &=
q^{-2} G_i \star W_{-j}  + 
(1-q^{-2}) \sum_{\ell=0}^j G_{j-\ell}\star W_{-i-\ell}
+
(q^{-2}-1)\sum_{\ell=1}^j G_{i+\ell}\star W_{\ell-j}
\end{align*}
\noindent and
\begin{align*}
 G_i \star W_{j+1} &=
q^{-2} W_{j+1} \star  G_i + 
(1-q^{-2}) \sum_{\ell=0}^j W_{i+\ell+1}\star  G_{j-\ell}
+
(q^{-2}-1)\sum_{\ell=1}^j W_{j-\ell+1}\star G_{i+\ell},
\\
W_{j+1} \star  G_i  &=
q^{2}  G_i \star W_{j+1}+ 
(1-q^{2}) \sum_{\ell=0}^j  G_{j-\ell} \star W_{i+\ell+1}
+
(q^{2}-1)\sum_{\ell=1}^j G_{i+\ell} \star W_{j-\ell+1}
\end{align*}
\noindent and
\begin{align*}
\tilde G_i \star W_{-j} &=
q^{-2} W_{-j} \star \tilde G_i + 
(1-q^{-2}) \sum_{\ell=0}^j W_{-i-\ell}\star \tilde G_{j-\ell}
+
(q^{-2}-1)\sum_{\ell=1}^j W_{\ell-j}\star \tilde G_{i+\ell},
\\
W_{-j} \star \tilde G_i  &=
q^{2} \tilde G_i \star W_{-j}+ 
(1-q^{2}) \sum_{\ell=0}^j \tilde G_{j-\ell} \star W_{-i-\ell}
+
(q^{2}-1)\sum_{\ell=1}^j \tilde G_{i+\ell} \star W_{\ell-j}
\end{align*}
\noindent and
\begin{align*}
\tilde G_i \star W_{j+1}  &=
q^{2} W_{j+1}\star \tilde G_i + 
(1-q^{2}) \sum_{\ell=0}^j W_{i+\ell+1}\star  \tilde G_{j-\ell}
+ (q^{2}-1)\sum_{\ell=1}^j W_{j-\ell+1}\star \tilde G_{i+\ell},
\\
W_{j+1}\star \tilde G_i  &=
q^{-2} \tilde G_i \star W_{j+1}  + 
(1-q^{-2}) \sum_{\ell=0}^j  \tilde G_{j-\ell}\star W_{i+\ell+1}
+
(q^{-2}-1)\sum_{\ell=1}^j\tilde G_{i+\ell}\star W_{j-\ell+1}.
\end{align*}
\end{enumerate}
\end{lemma}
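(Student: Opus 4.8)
The plan is to read off Lemma~\ref{lem:rr1S} from Lemma~\ref{lem:transGW} by a change of summation index, handling first the family of identities relating $G_i\star W_{-j}$ and $W_{-j}\star G_i$, and then deducing the other three families by applying the symmetries $\sigma$, $S$, $\sigma S$.

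Fix $i,j\in\mathbb N$ and set $n=i+j$. Recall the orderings of $L$ and $R$ in case~1 preceding Lemma~\ref{lem:transGW}: writing $u_0,u_1,u_2,\ldots$ for the ordering of $L$ and $v_0,v_1,v_2,\ldots$ for the ordering of $R$, we have $u_{2m}=G_m\star W_{-(n-m)}$, $u_{2m+1}=G_{n-m}\star W_{-m}$, and similarly $v_{2m}=W_{-(n-m)}\star G_m$, $v_{2m+1}=W_{-m}\star G_{n-m}$. Suppose first $i\le j$, so $2i\le n$; then $u_{2i}=G_i\star W_{-j}$ and $v_{2i}=W_{-j}\star G_i$, and since the index $2i$ is even, Lemma~\ref{lem:transGW}(i) gives
\begin{align*}
G_i\star W_{-j}=W_{-j}\star G_i+(1-q^2)\sum_{r=0}^{2i-1}(-1)^r v_r,
\qquad
W_{-j}\star G_i=G_i\star W_{-j}+(1-q^{-2})\sum_{r=0}^{2i-1}(-1)^r u_r.
\end{align*}
In $\sum_{r=0}^{2i-1}(-1)^r v_r$ the even terms $r=2s$ $(0\le s\le i-1)$ contribute $W_{-(n-s)}\star G_s$ and the odd terms $r=2s+1$ contribute $-W_{-s}\star G_{n-s}$; substituting $n=i+j$ and putting $\ell=i-s$ converts these into $\sum_{\ell=1}^{i}W_{-\ell-j}\star G_{i-\ell}$ and $-\sum_{\ell=1}^{i}W_{\ell-i}\star G_{j+\ell}$, and likewise for the $u$-sum. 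This reproduces exactly the two stated identities for $G_i\star W_{-j}$ and $W_{-j}\star G_i$ in part~(i). If instead $i>j$, then $2j+1\le n$ and $u_{2j+1}=G_i\star W_{-j}$, $v_{2j+1}=W_{-j}\star G_i$; now the index $2j+1$ is odd, so Lemma~\ref{lem:transGW}(ii) yields the factor $q^{\pm2}$ on the leading term, and splitting $\sum_{r=0}^{2j}(-1)^r v_r$ (resp.\ $u_r$) into even terms $r=2s$ $(0\le s\le j)$ and odd terms $r=2s+1$ $(0\le s\le j-1)$ and setting $\ell=j-s$ produces the ranges $\sum_{\ell=0}^{j}$ and $\sum_{\ell=1}^{j}$ of part~(ii).

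To obtain the remaining three pairs of identities in each of (i) and (ii), apply the maps $\sigma$ and $S$ of Lemma~\ref{lem:Vsym}, which are respectively an automorphism and an antiautomorphism of the $q$-shuffle algebra $\mathbb V$, and which by Lemma~\ref{lem:sigSact} interchange $G_k\leftrightarrow\tilde G_k$ and permute the alternating $W$'s accordingly. Applying $\sigma$ to the pair already established replaces $G$ by $\tilde G$ and $W_{-\ast}$ by $W_{\ast+1}$, giving the $\tilde G$, $W_{j+1}$ pair; applying $S$ (which reverses every $q$-shuffle product) gives the $\tilde G$, $W_{-j}$ pair; and applying $\sigma S$ gives the $G$, $W_{j+1}$ pair. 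Equivalently, one may read these three families directly off cases $4$, $3$, $2$ of Lemma~\ref{lem:transGW} in the same manner as above.

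I do not anticipate a genuine obstacle: all the content is already in Lemma~\ref{lem:transGW}, and the only care required is bookkeeping --- checking that the even/odd split of each alternating sum followed by the substitution $\ell=i-s$ (resp.\ $\ell=j-s$) reproduces precisely the index ranges asserted in the statement, and keeping straight which of the four families is the image of which under $\sigma$, $S$, $\sigma S$, remembering that $S$ and $\sigma S$, being antiautomorphisms, reverse the order of the two factors in each product.
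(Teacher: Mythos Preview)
Your proposal is correct and is exactly the intended approach: the paper itself offers no proof beyond the remark that Lemma~\ref{lem:rr1S} is a reformulation of Lemma~\ref{lem:transGW}, and your argument makes that reformulation explicit by matching $G_i\star W_{-j}$ with the appropriate $u_{2i}$ or $u_{2j+1}$, splitting the alternating sum into even and odd parts, and reindexing. Your use of $\sigma$, $S$, $\sigma S$ to transport case~1 to the other three families mirrors precisely how the paper derives cases~2--4 of Lemma~\ref{lem:transGW} from case~1.
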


\newpage
\section{Appendix B: 
Commutator relations for alternating words, part II}

In this appendix we give a reformulation of
Lemma \ref{lem:ggww}.

\begin{lemma}
\label{lem:rr2}
For $i,j\in \mathbb N$ the following holds
in the $q$-shuffle algebra $\mathbb V$.
\begin{enumerate}
\item[\rm (i)]  For $i\leq j$,
\begin{align*}
\tilde G_i \star G_j &= G_j \star \tilde G_i + 
(1-q^2)\sum_{\ell=1}^i W_{\ell-i} \star W_{j+\ell} 
-(1-q^2)\sum_{\ell=0}^{i-1} W_{-j-\ell} \star W_{i-\ell},
\\
G_i \star \tilde G_j &= \tilde G_j  \star G_i + 
(1-q^2)\sum_{\ell=1}^i W_{i-\ell+1} \star  W_{-j-\ell+1}
-(1-q^2)\sum_{\ell=0}^{i-1} W_{j+\ell+1}\star W_{\ell-i+1},
\\
 \tilde G_j \star  G_i &= G_i \star \tilde G_j  + 
(1-q^2)\sum_{\ell=1}^i W_{j+\ell} \star W_{\ell-i} 
-(1-q^2)\sum_{\ell=0}^{i-1} W_{i-\ell}\star W_{-j-\ell},
\\
G_j \star \tilde G_i   &=   \tilde G_i \star  G_j + 
(1-q^2)\sum_{\ell=1}^i W_{-j-\ell+1} \star W_{i-\ell+1}
-(1-q^2)\sum_{\ell=0}^{i-1} W_{\ell-i+1}\star W_{j+\ell+1},
\\
\\
W_{i+1} \star  W_{-j} &= W_{-j} \star W_{i+1} +
(1-q^{-2})\sum_{\ell=0}^i G_{j+1+\ell} \star   \tilde G_{i-\ell}
-(1-q^{-2})\sum_{\ell=0}^i G_{i-\ell} \star  \tilde G_{j+1+\ell},
\\
W_{-i}\star W_{j+1} &= W_{j+1} \star W_{-i} +
(1-q^{-2})\sum_{\ell=0}^i \tilde G_{j+1+\ell} \star  G_{i-\ell}
-(1-q^{-2})\sum_{\ell=0}^i \tilde G_{i-\ell} \star  G_{j+1+\ell},
\\
W_{-j} \star W_{i+1}  &=  W_{i+1} \star W_{-j} +
(1-q^{-2})\sum_{\ell=0}^i G_{i-\ell} \star \tilde G_{j+1+\ell}
-(1-q^{-2})\sum_{\ell=0}^i G_{j+1+\ell} \star \tilde G_{i-\ell},
\\
 W_{j+1} \star  W_{-i} &= W_{-i} \star W_{j+1}  +
(1-q^{-2})\sum_{\ell=0}^i  \tilde  G_{i-\ell} \star G_{j+1+\ell}
-(1-q^{-2})\sum_{\ell=0}^i  \tilde  G_{j+1+\ell} \star G_{i-\ell}.
\end{align*}
\item[\rm (ii)] For $i> j$,
\begin{align*}
\tilde G_i \star G_j &= G_j \star \tilde G_i + 
(1-q^2)\sum_{\ell=1}^j W_{\ell-j}\star  W_{i+\ell} 
-(1-q^2)\sum_{\ell=0}^{j-1} W_{-i-\ell} \star  W_{j-\ell},
\\
 G_i \star \tilde G_j &= \tilde G_j \star  G_i + 
(1-q^2)\sum_{\ell=1}^j W_{j-\ell+1} \star W_{-i-\ell+1}
-(1-q^2)\sum_{\ell=0}^{j-1} W_{i+\ell+1}\star  W_{\ell-j+1},
\\
\tilde G_j \star G_i &= G_i \star \tilde G_j + 
(1-q^2)\sum_{\ell=1}^j W_{i+\ell}\star  W_{\ell-j} 
-(1-q^2)\sum_{\ell=0}^{j-1} W_{j-\ell}\star W_{-i-\ell},
\\
G_j \star \tilde  G_i  &=   \tilde G_i \star G_j + 
(1-q^2)\sum_{\ell=1}^j W_{-i-\ell+1} \star W_{j-\ell+1}
-(1-q^2)\sum_{\ell=0}^{j-1} W_{\ell-j+1} \star  W_{i+\ell+1},
\\
\\
W_{i+1}\star  W_{-j} &= W_{-j}\star  W_{i+1} +
(1-q^{-2})\sum_{\ell=0}^j G_{i+1+\ell} \star   \tilde G_{j-\ell}
-(1-q^{-2})\sum_{\ell=0}^j G_{j-\ell} \star  \tilde G_{i+1+\ell},
\\
W_{-i} \star W_{j+1} &= W_{j+1}\star   W_{-i}+
(1-q^{-2})\sum_{\ell=0}^j \tilde G_{i+1+\ell}  \star G_{j-\ell}
-(1-q^{-2})\sum_{\ell=0}^j \tilde G_{j-\ell} \star  G_{i+1+\ell},
\\
W_{-j}\star W_{i+1}  &=  W_{i+1}\star W_{-j}+
(1-q^{-2})\sum_{\ell=0}^j G_{j-\ell} \star \tilde G_{i+1+\ell} 
-(1-q^{-2})\sum_{\ell=0}^j G_{i+1+\ell} \star \tilde G_{j-\ell},
\\
 W_{j+1}\star W_{-i} &= W_{-i}\star W_{j+1}+
(1-q^{-2})\sum_{\ell=0}^j    \tilde  G_{j-\ell}\star G_{i+1+\ell} 
-(1-q^{-2})\sum_{\ell=0}^j  \tilde  G_{i+1+\ell} \star G_{j-\ell}.
\end{align*}
\end{enumerate}
\end{lemma}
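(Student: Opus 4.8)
The plan is to deduce Lemma \ref{lem:rr2} from Lemma \ref{lem:ggww} by unwinding the various orderings of $N,S,E,W$ and re-indexing the alternating sums that appear there. Fix $i,j\in\mathbb N$ and set $n=i+j$; the two sub-cases $i\leq j$ and $i>j$ will correspond to the even- and odd-indexed formulas of Lemma \ref{lem:ggww}. The first step is to record the orderings in closed form: in ordering I of $N$ the term in position $2k$ is $\tilde G_k\star G_{n-k}$ and the term in position $2k+1$ is $\tilde G_{n-k}\star G_k$, with analogous descriptions of orderings II and of $S,E,W$. Matching $\tilde G_i\star G_j$ against this dictionary shows that it sits in an even position exactly when $i\leq j$ and in an odd position exactly when $i>j$. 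This is the source of the case split in Lemma \ref{lem:rr2}, and it also explains how the ``$j$ even / $j$ odd'' dichotomy of Lemma \ref{lem:ggww} gets reorganized into the ``$i\leq j$ / $i>j$'' dichotomy here.

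Next, for each of the sixteen identities I would identify the left-hand product as one of $u_m,v_m,U_m,V_m$ in the appropriate one of the four cases of Lemma \ref{lem:ggww}. The first block of four identities in each part (those relating $\tilde G\star G$ to $G\star\tilde G$ with $W\star W$ corrections) comes from parts (i), (ii) of Lemma \ref{lem:ggww}, while the second block of four (relating $W\star W$ to $W\star W$ with $G\star\tilde G$ corrections) comes from parts (iii), (iv). Substituting the closed formula of Lemma \ref{lem:ggww} produces an alternating sum $\sum_m (-1)^m(\cdots)$ over the $V$'s or $U$'s (resp.\ the $v$'s or $u$'s); translating each summand back through the ordering dictionary and shifting the summation variable turns this sum into the two sums $\sum_{\ell=1}^{i}$ and $\sum_{\ell=0}^{i-1}$ displayed in Lemma \ref{lem:rr2} (resp.\ $\sum_{\ell=1}^{j}$ and $\sum_{\ell=0}^{j-1}$ in part (ii)). The prefactor $(1-q^2)$ or $(1-q^{-2})$ is inherited verbatim.

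To keep the work finite, I would prove one representative identity in each part from scratch and obtain the remaining fifteen by symmetry: applying the automorphism $\sigma$ and the antiautomorphism $S$ of the $q$-shuffle algebra, which act on the alternating words as in Lemma \ref{lem:sigSact}, together with the interchange $i\leftrightarrow j$, permutes the eight identities of part (i) among themselves and likewise the eight identities of part (ii). Alternatively, and perhaps more cleanly, one can bypass Lemma \ref{lem:ggww} altogether and prove Lemma \ref{lem:rr2} directly by induction on $n=i+j$ using the recursions of Lemma \ref{lem:prep} together with the base relations of Propositions \ref{prop:rel1}, \ref{prop:rel2}, \ref{prop:rel3}, exactly as Lemma \ref{lem:ggww} itself is established.

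The main obstacle is the index bookkeeping. One must verify that, after the change of summation variable forced by the ordering, the ranges and signs coming from Lemma \ref{lem:ggww} land precisely on $W_{\ell-i}\star W_{j+\ell}$ for $1\leq\ell\leq i$ and $W_{-j-\ell}\star W_{i-\ell}$ for $0\leq\ell\leq i-1$ (and the analogous expressions in the other identities). Getting the boundary terms right, in particular the appearance of $G_0=\tilde G_0=1$ and the special role of $W_0=x$ and $W_1=y$, is where care is needed; once the dictionary between positions and alternating words is fixed, everything else is routine arithmetic.
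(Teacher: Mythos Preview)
Your proposal is correct and matches the paper's approach: the paper presents Lemma \ref{lem:rr2} simply as ``a reformulation of Lemma \ref{lem:ggww}'' with no further argument, and what you outline---unwinding the orderings I, II of $N,S,E,W$, matching $\tilde G_i\star G_j$ etc.\ to the appropriate $u_m,v_m,U_m,V_m$ according to the parity of its position, and re-indexing the alternating sums---is precisely what that reformulation amounts to. Your observation that $\sigma$ and $S$ (via Lemma \ref{lem:sigSact}) permute the eight identities in each part is also in the spirit of the paper, which uses exactly this device in the proof of Lemma \ref{lem:ggww} itself.
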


\newpage
\section{Appendix C: Examples of commutator relations }

\noindent In this appendix we give some examples of commutator relations 
from Lemmas \ref{lem:transGW} and
\ref{lem:ggww}.

\bigskip

\noindent Length 2:
\begin{align*}
W_1 \star W_0 &= W_0\star W_1 + (1-q^{-2})(G_1 - \tilde G_1).
\end{align*}

\noindent Length 3:
\begin{align*}
G_1\star W_0 &= q^{2} W_0 \star G_1 + (1-q^{2})W_{-1},
\\
W_1 \star G_1 &= q^{2}  G_1\star W_1 + (1-q^{2})W_{2},
\\
\tilde G_1 \star W_0 &= q^{-2} W_0 \star \tilde G_1 + (1-q^{-2})W_{-1},
\\
W_1 \star \tilde G_1 &= q^{-2} \tilde G_1 \star W_1 + (1-q^{-2})W_{2}.
\end{align*}

\noindent Length 4:
\begin{align*}
W_2 \star  W_0 &= W_0 \star W_2 + (1-q^{-2})(G_2 - \tilde G_2),
\\
W_1 \star W_{-1} &= W_{-1} \star  W_1 + (1-q^{-2})(G_2 - \tilde G_2),
\\
\tilde G_1 \star G_{1} &= G_1 \star \tilde G_{1} + (1-q^{2})(W_0 \star W_2
- W_{-1}\star W_1).
\end{align*}

\noindent Length 5:
\begin{align*}
 G_2 \star W_0 &= 
q^{2} W_0 \star G_2 + (1-q^{2})W_{-2},
\\
 G_1 \star W_{-1} &= (q^{2}-1)W_0 \star  G_2 +W_{-1} \star G_1
+(1-q^{2})W_{-2},
\\
W_1\star  G_2 &= 
q^{2}  G_2 \star  W_1+ (1-q^{2})W_{3},
\\
W_2 \star  G_1&= 
(q^{2}-1) G_2\star W_1 + G_1 \star  W_2
+(1-q^{2})W_3,
\\
\tilde G_2\star W_0 &= 
q^{-2} W_0 \star \tilde G_2 + (1-q^{-2})W_{-2},
\\
\tilde G_1 \star W_{-1} &= (q^{-2}-1)W_0 \star \tilde G_2 +W_{-1}\star \tilde G_1
+(1-q^{-2})W_{-2},
\\
W_1 \star  \tilde G_2 &= 
q^{-2} \tilde G_2 \star W_1+ (1-q^{-2})W_{3},
\\
W_2 \star \tilde G_1&= 
(q^{-2}-1) \tilde G_2 \star W_1 +\tilde G_1\star W_2
+(1-q^{-2})W_3.
\end{align*}

\noindent Length 6:
\begin{align*}
W_3 \star W_0 &= W_0\star W_3 + (1-q^{-2})(G_3 - \tilde G_3),
\\
W_2 \star W_{-1} &= W_{-1} \star W_2 +
(1-q^{-2})(G_2\star \tilde G_1 + G_3  -G_1 \star \tilde G_2
- \tilde G_3),
\\
W_1 \star W_{-2} &= W_{-2} \star W_1 + (1-q^{-2})(G_3 - \tilde G_3),
\\
\tilde G_1 \star G_{2} &= G_2 \star \tilde G_{1}
+ (1-q^{2})(W_0 \star W_3 - W_{-2} \star W_1),
\\
\tilde G_2 \star G_{1} &= G_1 \star \tilde G_{2} + 
(1-q^{2})(W_0 \star W_3 - W_{-2}\star W_1).
\end{align*}

\noindent Length 7:
\begin{align*}
& G_3 \star W_0= 
q^{2}W_0 \star  G_3+(1-q^{2})W_{-3},
\\
& G_2 \star W_{-1}= 
q^{2} W_{-1} \star G_2+
(1-q^{2})W_{-2}\star G_1+
(q^{2}-1)W_{0}\star G_3+
(1-q^{2})W_{-3},
\\
& G_1\star W_{-2}= 
 W_{-2}\star G_1+
(q^{2}-1)W_{0}\star G_3+
(1-q^{2})W_{-3},
\\
&W_1 \star G_3= 
q^{2} G_3\star W_1+(1-q^{2})W_{4},
\\
&W_2 \star G_2= 
q^{2}  G_2\star W_2+
(1-q^{2}) G_1 \star W_3+
(q^{2}-1) G_3 \star W_1+
(1-q^{2})W_{4},
\\
&W_3 \star G_1= 
  G_1 \star W_3+
(q^{2}-1) G_3\star W_1+
(1-q^{2})W_{4},
\\
&\tilde G_3 \star W_0= 
q^{-2}W_0 \star \tilde G_3+(1-q^{-2})W_{-3},
\\
&\tilde G_2 \star W_{-1}= 
q^{-2} W_{-1}\star \tilde G_2+
(1-q^{-2})W_{-2} \star \tilde G_1+
(q^{-2}-1)W_{0}\star \tilde G_3+
(1-q^{-2})W_{-3},
\\
&\tilde G_1\star  W_{-2}= 
 W_{-2}\star \tilde G_1+
(q^{-2}-1)W_{0}\star \tilde G_3+
(1-q^{-2})W_{-3},
\\
&W_1 \star \tilde G_3= 
q^{-2}\tilde G_3\star W_1+(1-q^{-2})W_{4},
\\
&W_2 \star \tilde G_2= 
q^{-2} \tilde G_2\star W_2+
(1-q^{-2})\tilde G_1\star  W_3+
(q^{-2}-1)\tilde G_3 \star W_1+
(1-q^{-2})W_{4},
\\
&W_3\star \tilde G_1= 
 \tilde G_1 \star W_3+
(q^{-2}-1)\tilde G_3 \star W_1+
(1-q^{-2})W_{4}.
\end{align*}

\noindent Length 8:
\begin{align*}
W_4 \star W_0 &=W_0 \star W_4 +(1-q^{-2})(G_4 -\tilde G_4),
\\
W_3 \star W_{-1} &=W_{-1}\star W_3 +(1-q^{-2})(G_3 \star 
\tilde G_1 + G_4 - G_1 \star \tilde G_3 -\tilde G_4),
\\
W_2 \star W_{-2} &=W_{-2} \star W_2 +(1-q^{-2})(G_3\star \tilde G_1
+ G_4 - G_1 \star \tilde G_3 -\tilde G_4),
\\
W_1 \star W_{-3} &=W_{-3} \star W_1 +(1-q^{-2})(G_4 -\tilde G_4),
\\
\tilde G_1 \star G_3 &= G_3 \star \tilde G_1 +
(1-q^2)(W_0\star W_4-W_{-3}\star W_1),
\\
\tilde G_2 \star G_2 &= 
G_2 \star \tilde G_2 + (1-q^2)(W_0\star W_4+W_{-1}\star W_3-
W_{-2}\star W_2-W_{-3}\star W_1),
\\
\tilde G_3 \star G_1 &= G_1 \star \tilde G_3 + 
(1-q^2)(W_0\star W_4-W_{-3}\star W_1).
\end{align*}

\bigskip

\noindent Paul Terwilliger \hfil\break
\noindent Department of Mathematics \hfil\break
\noindent University of Wisconsin \hfil\break
\noindent 480 Lincoln Drive \hfil\break
\noindent Madison, WI 53706-1388 USA \hfil\break
\noindent email: {\tt terwilli@math.wisc.edu }\hfil\break

\end{document}